\documentclass[11pt]{amsart} 
\usepackage[english]{babel}

\usepackage[utf8]{inputenc}
\usepackage[T1]{fontenc}
\usepackage{lmodern}	

\usepackage{amsmath,amsthm,amscd,amsfonts,amssymb}
\usepackage{epsfig,latexsym,graphicx,psfrag}
\usepackage{xcolor}

\usepackage{enumerate}
\usepackage{array} 
\usepackage{float}
\usepackage{soul} 

\usepackage[a4paper, margin = 3cm]{geometry}

\newtheorem{theo}{Theorem}

\newtheorem{coro}{Corollary}
\newtheorem{prop}{Proposition}[section]
\newtheorem{lemm}[prop]{Lemma}
\newtheorem{propri}[prop]{Properties}
\newtheorem{proper}[prop]{Property}
\newtheorem{sublemm}[prop]{Sublemma}

\newtheorem{claim}[prop]{Claim}
\newtheorem{crit}[prop]{Criterium}
\newtheorem{exem}[prop]{Examples}
\newtheorem{clai}{Claim}
\newtheorem{conse}[prop]{Consequences}

\newtheorem*{rem}{Remark}

\newtheorem{defi}[prop]{Definition }

\newtheorem{definota}[prop]{Definition-Notation }
\newtheorem{rema}[prop]{Remark }

\newcommand{\N}{\mathbb{N}} \newcommand{\Z}{\mathbb{Z}} \newcommand{\Q}{\mathbb{Q}} \newcommand{\R}{\mathbb{R}} 

\def \ds {\displaystyle} \def \sct {\scriptstyle} 

\newcommand{\dis}{\displaystyle}

\newcommand{\pbul}{\scriptstyle \bullet}
\newcommand{\LRA}{\Longleftrightarrow}

\newcommand{\id}{\mbox{\textsf{Id}}}
\newcommand{\bds}{\boldsymbol}

\def\ra{\mathop{\longrightarrow}}
\def\Ra{\mathop{\Longrightarrow}}
\def\La{\mathop{\Longleftarrow}}
\def\Lra{\mathop{\Longleftrightarrow}}

\def\Sgn{\mathop{\dis\lhd}}

\newcommand{\IET}{\textsf{IET}}
\newcommand{\QR}{Q}
\newcommand{\E}{E}
\newcommand{\HA}{H_{A,Q}}
\newcommand{\HAp}{H_{A',Q'}}
\newcommand{\iet}{\textsf{iet}}
\newcommand{\iett}{{\sf{iets}} }
\newcommand{\saf}{\textsf{SAF}}
\newcommand{\im}{\textsf{Im}}
\newcommand{\BP}{\textsf{BP}}
\newcommand{\RIET}{\textsf{IET}_{\Q}}
\newcommand{\DIET}{\Delta\textsf{IET}_{\Q}}

\newcommand{\ab}{\textsf{ab}}
\newcommand{\IN}{\{0,\dots,q-1 \} } 
\newcommand{\D}{\mathbb{D}} 
\newcommand{\cl}{\mbox{\text{dl}}}

\newcommand{\bigsigma}{\dis \sigma}
\def\psd{\mathop{\rtimes }}

\definecolor{purple}{rgb}{0.62,0.12,0.94}

\usepackage{lineno} 

\title{Some Elementary Amenable subgroups of interval exchange transformations}

\author{Nancy Guelman and Isabelle Liousse}

\begin{document}

\address{{\bf  Nancy  GUELMAN}, \rm{IMERL, Facultad de Ingenier\'{\i}a, {Universidad de la Rep\'ublica, C.C. 30, Montevideo, Uruguay.} \textit{nguelman@fing.edu.uy}}.}

\address{{\bf Isabelle LIOUSSE}, \rm{Universi\'e de Lille, CNRS  UMR 8524 - Laboratoire Paul Painlevé, F-59000 Lille, France.  \textit {isabelle.liousse@univ-lille.fr}}.}

\begin{abstract}
In this paper, we study a family of finitely generated elementary amenable $\iet$-groups. These groups are generated by finitely many rationals $\iet$s and rotations. For them, we state criteria for not virtual nilpotency or solvability, and we give  conditions to ensure that they are not virtually solvable. We precise their abelianizations, we determine when they are isomorphic to certain lamplighter groups and we provide non isomorphic cases among them. As  consequences, in the class of infinite finitely generated subgroups of $\iet$s up to isomorphism,  we exhibit infinitely many non virtually solvable and non linear groups, and  infinitely many solvable groups of arbitrary derived length.
\end{abstract}
\keywords{Interval exchange transformations, elementary amenable groups, non virtually solvable groups}
\subjclass{37E05, 57S30, 37C85, 20K35, 20F50}

\maketitle

\tableofcontents


\vfill \eject


\section{Introduction}

\begin{defi} An {\bf interval exchange transformation} ($\iet$) is a bijective map $f: [0,1) \rightarrow [0,1)$ defined by a finite partition of the unit interval into half-open subintervals and a reordering of these intervals by translations. 
 If a partition consists of intervals with rational endpoints, we say that $f$ is a {\bf rational interval exchange transformation}.

We denote by $\boldsymbol{\IET}$ the group consisting of all $\iet$s.
\end{defi}

\begin{rema}\label{RemRZ}
As the maps we deal with are only piecewise continuous, it is equivalent (and sometimes more convenient) to consider an \iet \ as  a bijection of the circle $[0,1)/0\sim 1$ (see e.g. \cite{DFG} or  \cite{Cor}). To avoid ambiguity, we will denote by $\IET(\R/\Z)$ the group of all interval exchange transformations on the circle.
\end{rema}

Since the late seventies, the dynamics and the ergodic properties of a single interval exchange transformation were intensively studied (see e.g. the Viana survey \cite{Vi}). A natural extension is to consider the dynamics in terms of group actions. 

\begin{exem} \ 

$\pbul$ {\rm Circular rotations $R_a \ (a\in [0,1) / {\hskip 0mm}_{0\sim 1})$ belong to $\IET(\R/\Z)$, they identify with \iet s over $2$ intervals and they form a subgroup of $\IET$ that we denote by $\mathbb S^1$.}
\end{exem}

$\pbul$ Let $q$ be a positive integer, the symmetric group $\mathcal S_q$ can be represented as a subgroup of $\IET$, due to the following 
\begin{defi} 
Let $q \in \N_{>1}$ and consider the partition $\dis [0,1)=\bigsqcup_{i=1} ^q I_i, \text{ \ where \  } I_i=\bigl[\frac{i-1}{q}, \frac{i}{q}\bigr).$

To any given $\tau \in \mathcal S_q$, we associate the unique $\iet$, $E_\tau$, that is continuous on $I_i$, $i \in \{1,\cdots, q\}$ and that satisfies $\E_\tau(I_i) =I_{\tau(i)}$. We set $\mathcal G_{\frac{1}{q}}:= \{\E_\tau, \ \tau \in \mathcal S_q \}$, 
it is plain that $\mathcal G_{\frac{1}{q}}$ is a subgroup of $\IET$.  In particular, if $\sigma_q$ is the $q$-cycle $(1,2,...,q) \in \mathcal S_q$ then $E_{\sigma_q} = R_{\frac{1}{q}}$.

In addition, the map $\mathfrak S : \left\{ 
\begin{array}{ll}
\mathcal G_{\frac{1}{q}} \to  \mathcal S_q \cr
\E_\tau  \mapsto \tau=\mathfrak S(\E_\tau)
\end{array} \right.$ is an isomorphism.
\end{defi}

\smallskip

This shows that all finite groups and all finitely generated abelian groups can be represented as subgroups of $\IET$. 

\medskip

The most famous problem concerning $\iet$-groups was raised by Katok: "\textit{Does $\IET$ contain copies of $\mathbb F_2$, the free group of rank $2$~?}\ " Dahmani, Fujiwara and Guirardel established that such subgroups are rare (\cite{DFG1} Theorem 5.2). 

\medskip

More generally, one can ask for a description of possible finitely generated subgroups of $\IET$. According to Novak "\textit{there is no distortion in $\IET$}" (\cite{No} Theorem 1.3), and as a standard consequence, "\textit{any finitely generated nilpotent subgroup of $\IET$ is virtually abelian}".

\medskip

Among many things, Dahmani, Fujiwara and Guirardel proved that "\textit{any finitely presented subgroup of $\IET$ is residually finite}" (\cite{DFG1} Theorem 7.1), "\textit{$\IET$ contains no infinite Kazhdan groups}" (\cite{DFG1} Theorem 6.2), and "\textit{any finitely generated torsion free solvable subgroup of $\IET$ is virtually abelian}" (\cite{DFG} Theorem 3). 

\smallskip

In contrast, they provide plenty of non virtually abelian solvable subgroups of $\IET$, by studying and using the embeddings of Lamplighter groups in $\IET$ (\cite[Section 4]{DFG}). 

\medskip

This suggests that finding big subgroups of $\IET$ seems difficult, especially as Cornulier conjectured in \cite{CorBour} that $\IET$ could be amenable. 

\smallskip

This conjecture is motivated by
works in \cite{JM1} and \cite{JMBMS} that establish that: 
"\textit{Given $\alpha_1,\alpha_2 \in  \R/{}_{\dis\Z} \setminus \Q/{}_{\dis \Z}$, the Nekrashevich group $\bigl[\ \Gamma_{\alpha_1,\alpha_2}\  ,\ \Gamma_{\alpha_1,\alpha_2} \ \bigr]$  is a finitely generated amenable simple group}",  where  $\Gamma_{\alpha_1,\alpha_2}$ is the $\IET(\R/\Z)$-subgroup of all transformations whose discontinuity points belong to $\langle\alpha_1,\alpha_2 \rangle$.
Note that, by its simplicity, the Nekrashevich group is not elementary amenable (see \cite[Corollary 2.4]{Chou}).

\smallskip
To summarize, beyond Katok's conjecture, there is no known example of non amenable or torsion-free non virtually abelian finitely generated $\iet$-group.

\medskip

The original motivation of this paper was to construct families of non virtually solvable finitely generated subgroups of $\IET$, we noticed in \cite[Corollary 3(1)]{GLtg} that certain $\iet$-groups are not virtually nilpotent. More precisely, we proved the following
\begin{crit} \label{CritGLtg}
The group generated by an irrational rotation and an $\iet$ that is not a rotation is not virtually nilpotent even not virtually polycyclic.
\end{crit} 

In addition, in \cite[Subsection 7.4]{GLtg}, we construct a finitely generated, elementary amenable and non virtually solvable $\iet$-group. A part of the material developed in that work is taken up, generalised and improved here, in the sense that we provide infinitely many non isomorphic finitely generated and non virtually solvable $\iet$-groups. More precisely, these groups are finitely generated by rotations and rationals $\iet$s. In particular, we prove that they are elementary amenable and we give criteria for their solvability or their non virtual solvability. We precise their abelianizations and we state some results of non isomorphicity.

\medskip

More precisely, the groups in consideration are defined as follows

\begin{defi}\label{defH} Let $s,m\in \N\setminus\{0\}$.
Let $\alpha_1, \cdots, \alpha_s\in \R/{}_{\dis\Z} \setminus \Q/{}_{\dis \Z}$ be $\Q$-independent.

We denote by $A=\langle\alpha_i\rangle$ the additive group generated by the $\alpha_i$'s and by $R_A$ the subgroup of $\IET$ of all rotations $R_a$ with  $a\in A$. The $\Q$-independence of the $\alpha_i$'s implies that $R_A\simeq A \simeq \Z^s$. 

\smallskip

Let $q\in \N\setminus\{0\}$  
and $g_1,...,g_m \in \mathcal G_{\frac{1}{q}}$. We denote by $\QR$ the subgroup of $\mathcal G_{\frac{1}{q}}$ generated by the $g_i$'s.

\smallskip

We define a finitely generated subgroup of $\IET$ by $\boldsymbol{H_{A,\QR}=\langle  R_{\alpha_1}, \dots, R_{\alpha_s} \ , \ g_1,\dots,g_m \rangle}$.

\end{defi}

Quite similar families of $\iet$-groups have been considered in \cite[Section 4]{DFG} within the framework of finitely generated groups, and, for larger $\iet$-groups, by Boshernitzan (\cite{Bos}) and Bier-Sushchanskyy (\cite{Bier}).

\bigskip

Here, we prove the following

\begin{theo}\label{THGenF1}Let $A$, $\QR$ be as in Definition \ref{defH} and $\sigma=\sigma_q$ be the $q$-cycle $(1,2,...,q) \in \mathcal S_q$. Then either:
\begin{enumerate} 
\item $\QR\subset \mathbb S^1$ is a rotation group. This condition is equivalent to that of $\HA$ being abelian, and also it is equivalent to require that $\langle  \mathfrak S(\QR),\sigma \rangle$ is abelian; or
\item $\QR$ contains an $\iet$ that is not a rotation, then
the group $\HA$ is elementary amenable of Chou class $EG_2$, it is not virtually nilpotent, it contains a free semigroup on two generators and it has exponential growth. 
\end{enumerate}
\end{theo}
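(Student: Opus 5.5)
The plan is to split according to whether every element of $\QR$ is a rotation. Elements of $\mathcal G_{\frac1q}$ that are rotations are exactly the $R_{k/q}=E_{\sigma^k}$. Indeed, $E_\tau$ is a rotation iff $\tau$ is a power of $\sigma$: a rotation permuting the intervals $I_i$ must be by a multiple of $\frac1q$.

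\emph{Case (1).} If $\QR\subset\mathbb S^1$, then $\HA$ is generated by rotations, hence abelian. Conversely, suppose $\HA$ is abelian. Every $g\in\QR$ then commutes with $R_{\alpha_1}$, which is an irrational rotation. An $\iet$ commuting with an irrational rotation $R_\alpha$ satisfies $g(x+n\alpha)=g(x)+n\alpha$, so $x\mapsto g(x)-x$ is invariant under the minimal rotation. Being piecewise constant, it is constant, so $g$ is a rotation. For the third equivalence, $\QR\subset\mathbb S^1$ means $\mathfrak S(\QR)\subset\langle\sigma\rangle$, which gives $\langle\mathfrak S(\QR),\sigma\rangle=\langle\sigma\rangle$ abelian. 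Conversely, if $\langle\mathfrak S(\QR),\sigma\rangle$ is abelian, each $\tau\in\mathfrak S(\QR)$ lies in the centralizer of the $q$-cycle $\sigma$ in $\mathcal S_q$, which is $\langle\sigma\rangle$, so $\QR$ consists of rotations.

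\emph{Case (2), amenability.} Let $N$ be the set of elements of $\HA$ whose translation vector takes values in $A+\frac1q\Z$ and whose breakpoints lie in $A+\frac1q\Z$. I will use the following structure. Every element of $\HA$ can be written as $R_a\circ h$ with $a\in A$, where $h$ is continuous on a partition of $[0,1)$ into intervals with endpoints in $\frac1q\Z+A$ and translates each piece by an element of $\frac1q\Z+A$. There is a homomorphism $\HA\to A$ (or to $A\oplus\Z/q$) sending an element to its ``mean translation''. This is the $A$-component of the SAF-type invariant: for $h\in\mathcal G_{\frac1q}$ it vanishes, and for $R_a$ it equals $a$. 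Let $K$ be its kernel. I want to show that $K$ is locally finite, so that it lies in $EG_1$ and $\HA$, being an extension of $K$ by the abelian group $\Z^s$, lies in $EG_2$. Any finitely generated subgroup of $K$ is generated by conjugates $R_a g R_{-a}$ with $g\in\QR$, together with commutator-type elements. Each such conjugate is a permutation of the $q$ intervals $[\frac{i-1}{q}+a,\frac iq+a)$. Finitely many of them all preserve the finite partition generated by the points $\frac iq+a_j$, permuting its atoms by translations, with lengths in a finite set. Since the group acts faithfully on finitely many atoms by piecewise translations, it is finite. The main obstacle is to verify precisely that $K$ is generated by such conjugates, and to identify the quotient as abelian. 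I would handle this via the normal closure of $\QR$ in $\HA$: $\HA=\langle\langle \QR\rangle\rangle\cdot R_A$, the normal closure is generated by the $R_aQR_{-a}$, and the quotient is a quotient of $R_A$. That suffices without computing the exact kernel.

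\emph{Case (2), growth.} Non virtual nilpotency follows from Criterium \ref{CritGLtg}, since $R_{\alpha_1}$ is an irrational rotation and $\QR$ contains a non-rotation. For the free semigroup, pick $g\in\QR$ not a rotation and look for two elements $u=R_{a}$ and $v=gR_{a}$ (or a conjugate), where $a\in A$ is chosen small and generic, and play ping-pong on measurable sets. As a fallback, I would use the fact that for elementary amenable groups, Chou's theorem says a finitely generated elementary amenable group has either polynomial growth, hence is virtually nilpotent by Gromov, or contains a free subsemigroup. This yields both the free semigroup on two generators and exponential growth from non virtual nilpotency.
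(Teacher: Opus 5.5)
Your proposal is correct and follows essentially the paper's proof: centralizer arguments for Item (1) (you reprove Novak's centralizer lemma directly instead of citing it), and for Item (2) a locally finite normal subgroup with abelian quotient, then Criterium \ref{CritGLtg}, then Chou's theorem. The differences are minor: your normal closure $\langle\langle \QR\rangle\rangle_{\HA}=\langle R_a\QR R_{-a}\rangle$ is exactly $\ker\ell$ by Lemma \ref{L3EM}(1), and your argument that it permutes the atoms of a $\frac1q$-invariant partition replaces the paper's use of $\ker\ell\subset\DIET$ and Properties \ref{base}(3); you should simply drop the ``mean translation'' homomorphism, since every \iet\ of $[0,1)$ has zero mean displacement (the correct map is $\ell$, or the \saf\ invariant), but your normal-closure route does not use it.
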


\medskip

Concerning the question of deciding whether these groups are solvable or are not virtually solvable, we have

\begin{theo}\label{THF0} Let $A$, $\QR$ be as in Definition \ref{defH} and $\sigma$ be the $q$-cycle $(1,2,...,q) \in \mathcal S_q$. Then:

\begin{enumerate}
\item If $q \geq 5$ and $\langle \mathfrak S (\QR),\sigma\rangle$ contains the alternating group $\mathcal A_q$ then the group $\HA$ satisfies Item (2) of Theorem \ref{THGenF1}. Moreover $\HA$ is not virtually solvable and it is not a linear group.

\item Let $p\in \N^*$, the group $\HA$ is $p$-solvable if and only if the group $ \langle \mathfrak S (\QR),\sigma\rangle$ is $p$-solvable.
\end{enumerate}
\end{theo}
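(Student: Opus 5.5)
The approach is to exhibit $\HA$ as an extension of a group of piecewise rotations by a finite permutation group, and to control each factor separately.

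\emph{Structure of $\HA$.} Let $\Lambda := A + \frac1q\Z$. Every element of $\HA$ is a piecewise translation with translation lengths in $\Lambda$. Consider $\mathcal N_0 := \langle R_{\frac1q}, R_A\rangle \simeq \Z/q \times \Z^s$. Set $G := \langle \HA, R_{\frac1q}\rangle$; it is generated by $R_A$ and $\widehat Q := \langle Q,\sigma\rangle$ (identified with $\langle \mathfrak S(Q),\sigma\rangle$). We define the normal subgroup $K \subset G$ of elements that preserve each interval $I_i$ modulo rotations by $A$. More precisely, $K$ consists of the maps that on each $I_i$ agree with an element of $\langle R_A\rangle$ ``restricted'' to $I_i$, i.e. piecewise rotations $x\mapsto x+a_j$ with cut points in $A+\frac1q\Z$, acting on the circle $\R/\frac1q\Z$ lifted to each block. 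The key claim is the exact sequence $1\to K\to G\to \langle \mathfrak S(Q),\sigma\rangle\to 1$, where the map is reduction modulo $A$: send $g$ to the permutation of blocks $I_i$ induced on the quotient, using that $A\cap \frac1q\Z=0$ by the irrationality and independence of the $\alpha_i$. To obtain this map concretely, use the homomorphism to $\mathcal S_q$ given by $x\mapsto \lfloor q(g(x)-a(x))\rfloor$ on generic points, where $a(x)\in A$ is the $A$-part of the translation length at $x$. The kernel is then generated, as a normal subgroup, by conjugates $E_\tau R_a E_\tau^{-1}$. I would show that $K$ is a product over the $q$ blocks of groups of the form $\langle$rotations by $A$ on a circle of length $\frac1q$ and restrictions$\rangle$, which is metabelian (in fact abelian-by-abelian: rotations are commutative within each block after conjugating the block back to $I_1$). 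The next step is to verify that $K$ is abelian. An element of $K$ acts on each block $I_i$ as a rotation of $I_i\simeq \R/\frac1q\Z$ by an element of $A$; indeed $E_\tau R_a E_\tau^{-1}$ acts on each $I_i$ as a rotation by $a$ mod $\frac1q$ of that block circle. So $K\subset (\Z^s)^q$ is abelian.

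\emph{Item (2).} Since $K$ is abelian, $G$ (and hence $\HA\le G$) is $p$-solvable as soon as $\langle \mathfrak S(Q),\sigma\rangle$ is $(p-1)$-solvable, which is not quite the statement. So I must sharpen this using that $R_A$ is central modulo $K$, or alternatively that $K$ is the image of the diagonal. I expect the correct statement to come from the fact that $\langle\mathfrak S(Q),\sigma\rangle$ is never abelian in case (2), together with careful bookkeeping: $[G,G]$ surjects onto the derived subgroup of $\widehat Q$, and the $A$-contributions of the iterated commutators die at the same stage. This derived-length matching is where I expect the main obstacle. I would try to prove that $G^{(p)}\cap K$ is trivial whenever $\widehat Q^{(p)}$ is trivial. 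For the converse, quotienting by $K$ gives $\widehat Q$, so solvability of $\HA$ passes to $\widehat Q$ once one checks that $\HA K = G$ (because $\sigma = R_{\frac1q}$ agrees modulo $K$ with a product involving $R_A$; indeed $R_{\frac1q}$ should be approximable within the $K$-coset structure). Passing between $\HA$ and $G$ is the second delicate point.

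\emph{Item (1).} Normal subgroups of finite index in $\HA$ map onto normal subgroups of $\widehat Q$. I would use that $\HA\cap K$ has finite index in $\HA$ being infinite, and that $\HA$ maps onto a finite-index subgroup of $\widehat Q$. Then, given a solvable finite-index subgroup $S$, its image is solvable, and I would argue via the action of $R_A$ (dense) that the image of $S$ is normalized by enough to contain $\mathcal A_q$, contradicting $q\ge5$. More robustly, I would pick $\alpha$ with $R_{n\alpha}\in S$ and an element of $Q$ not a rotation, and build, via conjugation of $R_{n\alpha}$ by elements of $S$, elements of $K$ whose block-patterns generate a nonabelian-simple section $\mathcal A_q$ inside $S$. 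Non-linearity then follows from Malcev: the group is finitely generated and residually finite, so I would show instead that, for every $n$, $S$ has a quotient containing $\mathcal A_q$ wreath-type sections. Combined with Tits' alternative (since there is no free subgroup, by elementary amenability), a linear group would have to be virtually solvable, which contradicts the first part.
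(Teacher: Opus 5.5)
\textbf{Gap: the exact sequence $1\to K\to G\to\langle\mathfrak S(Q),\sigma\rangle\to 1$ does not exist.} Your argument needs a map $G\to\langle\mathfrak S(Q),\sigma\rangle$ with $E_\tau\mapsto\tau$ and $R_a\mapsto\id$ (``reduction modulo $A$''), whose kernel $K$ is an abelian group of blockwise $A$-rotations. Both parts of this claim fail.

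First, $K$ must contain $R_a$ and $E_\tau R_aE_\tau^{-1}$. Neither preserves the blocks: each sends $I_i$ onto pieces of two different blocks. So your assertion that $E_\tau R_aE_\tau^{-1}$ ``acts on each $I_i$ as a rotation by $a$'' is false. Moreover, these two elements commute only if $\tau\in\langle\sigma\rangle$, because the centralizer of an irrational rotation in $\IET$ consists of rotations. Hence $K$ is never abelian once $\HA$ is nonabelian.

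Second, the map itself is in general not well defined, because the normal closure of $R_A$ can contain nontrivial elements of $Q$. Take $q=4$, $\mathfrak S(Q)=\langle(1,2),(3,4)\rangle$ and $a\in A\cap(\frac14,\frac12)$. The paper's $3$-solvable example checks that $[R_a,E_{(3,4)}]\,[R_a,E_{(1,2)}]^2=E_{(1,2)(3,4)}$. Your map kills the left side but sends the right side to $(1,2)(3,4)\neq\id$.

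The underlying reason is that an element of $\ker\ell$ permutes the points $x+\frac{i-1}{q}$ in a way that depends on $x$. Writing $a=\frac{j_0}{q}+\tilde a$ with $\tilde a\in(0,\frac1q)$, the element $R_aE_\tau R_{-a}$ acts by $\sigma^{j_0+1}\tau\sigma^{-(j_0+1)}$ for $x<\tilde a$ and by $\sigma^{j_0}\tau\sigma^{-j_0}$ for $x\ge\tilde a$. So there is no global block permutation, and $W$ is not a quotient of $G$ in the way you need. Consequently your Item (2) bookkeeping, the passage between $\HA$ and $G$, and your Item (1) argument through images in $\widehat Q$ have no foundation. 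You also leave the derived-length matching open yourself.

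\textbf{What is missing.} $W$ belongs on the other side. The right normal subgroup is $\ker\ell=T(\HA)$, which is locally finite with quotient $A$. $W$ enters only through the local-permutation morphisms $\omega_x:\ker\ell\to\mathcal S_q$, $x\in[0,\frac1q)$, which have two key properties:
\begin{enumerate}
\item They are jointly injective: $f=\id$ if and only if $\omega_x(f)=\id$ for every $x$.
\item Using the density of $\{na\}$ to realize each $[\sigma^p,\tau]$, one gets $\omega_x([\HA,\HA])=[W,W]$ for every $x$.
\end{enumerate}

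With these, Item (2) is a one-line chain: $D^p(\HA)=\{\id\}$ iff $D^{p-1}(\omega_x([\HA,\HA]))=D^{p-1}([W,W])$ is trivial for all $x$, iff $D^p(W)=\{\id\}$.

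For Item (1), $\mathcal A_q\subset[W,W]\subset\omega_x(\ker\ell)$. A finite-index normal solvable subgroup of $\ker\ell$ has nontrivial image under some $\omega_x$, and this image is normal in a group containing $\mathcal A_q$. After excluding an image consisting of involutions, simplicity of $\mathcal A_q$ forces the image to contain $\mathcal A_q$, contradicting solvability.

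Your non-linearity step via the Tits alternative (a finitely generated amenable linear group is virtually solvable) is a valid substitute for the paper's appeal to Schur's theorem on the torsion group $\ker\ell$. However, it presupposes the non-virtual-solvability that your proposal does not establish.
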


As consequences of the following theorem, Theorem \ref{THF0} (2) allows the construction of infinitely many non pairwise isomorphic solvable \iet-groups of arbitrary derived length and Theorem \ref{THF0} (1) provides infinitely many non virtually solvable \iet-groups that are not pairwise isomorphic (see Propositions \ref{nSol} and \ref{nonVSol}). 


\begin{theo} \label{THRFin} Let $A$, $\QR$ be as in Definition \ref{defH}, then: 
\begin{enumerate}
\item The torsion elements of $\HA$ form a normal subgroup $T(\HA)$ of $\HA$. The quotient group $\frac{\HA}{T(\HA)}\simeq A$ and $\HA\simeq T(\HA)\rtimes A$.

\item The abelianization $\frac{\HA}{[\HA,\HA]}$ of $\HA$ is isomorphic to a group of the form  $F \times A$, where $F$ is a quotient of $\mathfrak S(\QR)$ by a normal subgroup $N_Q$ satisfying $[\mathfrak S(\QR), \mathfrak S(\QR)]<N_Q<[W,W] \cap \mathfrak S(\QR)$ with $W=\langle \mathfrak S(\QR), \sigma \rangle$. In particular $F$ is a finite abelian group.

\end{enumerate}
\end{theo}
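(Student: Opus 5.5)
The plan is to build an explicit homomorphism $\rho:\HA\to A$ and check that its kernel is exactly the set of torsion elements. Every element $f\in\HA$ is an $\iet$ whose translation lengths lie in $A+\frac1q\Z$. Let $\Gamma$ be the finite group $\langle Q, R_{1/q}\rangle$, which is contained in $\mathcal G_{\frac1q}$, and note that $R_A$ normalizes nothing of $\Gamma$ in general. The cleaner route is to use the \emph{mean translation} (Sah--Arnoux--Fathi type invariant in degree zero), $\rho(f)=\int_0^1 (f(x)-x)\,dx \bmod \Z$, or rather its projection to $A$. Since each element of $\HA$ translates every piece by an element of $A+\frac1q\Z$, the translation at a point is well defined in $(\R/\Z)$. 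Composing $f$ with the projection $\pi:A\oplus\frac1q\Z/\Z\to A$, which is well defined because the $\alpha_i$ are $\Q$-independent and irrational, gives the map. The key fact to establish first is that $x\mapsto \pi(\text{translation of } f \text{ at } x)$ is constant for every $f\in\HA$.

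This constancy holds for the generators, and it is preserved under composition because the translation of $fg$ at $x$ is the translation of $g$ at $x$ plus that of $f$ at $g(x)$. So $\rho:\HA\to A$, with $\rho(f)$ the $A$-part of the (pointwise) translation, is a homomorphism. It is surjective because $\rho(R_{\alpha_i})=\alpha_i$, and it splits via $a\mapsto R_a$.

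For the kernel, take $f\in\ker\rho$. Then $f$ translates every point by an element of $\frac1q\Z$, so $f$ preserves the finite set $x+\frac1q\Z/\Z$ for every $x$. Writing $f$ as a word, the pieces of $f$ have endpoints in $A+\frac1q\Z$, a finite set of points, and on each orbit $x+\frac1q\Z$ the map $f$ acts as a permutation of $q$ points. These permutations range over finitely many possibilities as $x$ varies, since $f$ is piecewise a translation with finitely many pieces. Hence $f^{N}=\id$ with $N$ the lcm of the orders, so $f$ is torsion. Conversely, any torsion element maps to a torsion element of $A\simeq\Z^s$, which must be $0$. Thus $T(\HA)=\ker\rho$ is a normal subgroup, $\HA/T(\HA)\simeq A$, and the splitting gives $\HA\simeq T(\HA)\rtimes A$.

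For (2), the abelianization of a semidirect product $T\rtimes A$ with $A$ abelian is $T/\langle [T,T],[T,A]\rangle\times A$. Here $T=T(\HA)$ is normally generated by $Q$ inside $\HA$, because $\HA/\langle\langle Q\rangle\rangle$ is a quotient of $A$ with the same image $A$. Consequently $F=T/\langle[T,T],[T,A]\rangle$ is generated by the image of $Q$, and the natural map $\mathfrak S(Q)\to F$ is onto with kernel $N_Q\supset[\mathfrak S(Q),\mathfrak S(Q)]$. This already shows $F$ is finite abelian.

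The main obstacle is the upper bound $N_Q\subset[W,W]$. The plan is to construct a homomorphism $\Phi:\HA\to W/[W,W]\times A$ that sends $g\in Q$ to the class of $\mathfrak S(g)$ and $R_a$ to the class of $(\text{image of }R_{\{qa\}\text{-rounding}},a)$, so that elements of $N_Q$ must die in $W^{ab}$. To define it, fix $x$ generic, consider the action on the orbit $x+A+\frac1q\Z$, and record the permutation of the $q$ cosets mod $\frac1q$ together with the $A$-part. Well-definedness then reduces to the identity $R_a=R_{a'}R_{k/q}$ locally, where $R_{k/q}=E_{\sigma^k}$, making it a cocycle into $W$ whose abelianized value is constant in $x$. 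Verifying this independence from $x$ is where I expect the work to lie.
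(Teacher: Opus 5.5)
Your Item (1) is correct. So is the part of Item (2) giving a surjection $\mathfrak S(\QR)\to F$ with $[\mathfrak S(\QR),\mathfrak S(\QR)]\subseteq N_Q$. Both follow the paper in substance: your $\rho$ is the paper's morphism $\ell$. Your observation that $T(\HA)$ is the normal closure of $\QR$, so that $F=T(\HA)/[\HA,\HA]$ is generated by the image of $\QR$, replaces Lemma \ref{L3EM}(1) and Claim \ref{claimab}. (One small slip: $A+\frac1q\Z$ is not a finite set. You only need that an element of $\ker\rho$ permutes each $q$-point set $x+\frac1q\Z$, so $f^{q!}=\id$.)

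The genuine gap is the inclusion $N_Q\subseteq[W,W]$. It is the only non-formal part of Item (2), and you leave it as a plan. Moreover, the plan as stated cannot work. Suppose the rotations are fed into a $W$-valued ``permutation of the $q$ cosets'' cocycle, and write $a=\frac{j_0}{q}+\tilde a$ with $\tilde a\in(0,\frac1q)$. Let $x'$ be the representative of $x+\tilde a$ modulo $\frac1q$ in $[0,\frac1q)$. The permutation that $R_a$ induces from the points $x+\frac{i-1}{q}$ to the points $x'+\frac{i-1}{q}$ is $\sigma^{j_0}$ when $x<\frac1q-\tilde a$, and $\sigma^{j_0+1}$ otherwise. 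Its class in $W/[W,W]$ therefore depends on $x$ as soon as $\sigma\notin[W,W]$. This already happens for $q=4$ and $\mathfrak S(\QR)=\langle(1,3)\rangle$, where $W=\D_8$ and $[W,W]=\langle\sigma^2\rangle$. Likewise $a\mapsto\sigma^{\lfloor qa\rfloor}$ is not additive, so rounding does not give a homomorphism on $R_A$.

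The missing idea is to leave the rotations out altogether. Since $\HA=T(\HA)\rtimes R_A$ and the target is abelian, it suffices to produce a homomorphism $T(\HA)\to W/[W,W]$ that sends $g\in\QR$ to the class of $\mathfrak S(g)$ and is invariant under conjugation by $R_A$. Such a map kills $[T,T]$ and $[T,R_A]$, i.e.\ $[\HA,\HA]$.

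The paper obtains this map from the local permutation $\omega_x:\ker\ell\to\mathcal S_q$ of Proposition \ref{WMorph}, which satisfies $\omega_x(g)=\mathfrak S(g)$ on $\QR$. It is a homomorphism for free: elements of $\ker\ell$ preserve each set $\{x+\frac{i-1}{q}\}$, so there is no well-definedness or $x$-consistency issue.

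The key computation is Lemma \ref{perm loc conjugate commutator}. It gives $\omega_x(R_\gamma gR_{-\gamma})=\sigma^{j}\mathfrak S(g)\sigma^{-j}$, with $j\in\{j_0,j_0+1\}$ depending on $x$. Hence $\omega_x([R_a,t])=[\sigma^j,\mathfrak S(t)]$ and $\omega_x([t_1,t_2])=[\mathfrak S(t_1),\mathfrak S(t_2)]$. Combined with the generating set of $[\HA,\HA]$ from Lemma \ref{L3EM}(2), this gives $\omega_x([\HA,\HA])\subseteq[W,W]$. So any $r\in\QR\cap[\HA,\HA]$ satisfies $\mathfrak S(r)=\omega_x(r)\in[W,W]$.

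On $\ker\ell$ the dependence on $x$ enters only through conjugation by a power of $\sigma$, which vanishes in $W/[W,W]$. For $R_a$ itself it enters as a factor $\sigma$, which does not vanish. This matches the paper's observation that $\omega_x$ does not extend to $\HA$ unless $\HA$ is abelian.
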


\medskip

\begin{coro} \label{CPAb} Let $A$ and $\QR$ be as in Definition \ref{defH}, $\sigma=\sigma_q$ be the $q$-cycle $(1,2,...,q) \in \mathcal S_q$ and $W= \langle\mathfrak S(Q),\sigma \rangle$. Then:
\begin{enumerate}
\item If $[W,W] \cap \mathfrak S(Q)=[\mathfrak S(Q),\mathfrak S(Q)]$ then $F\simeq \frac{\mathfrak S(Q)}{[\mathfrak S(Q),\mathfrak S(Q)]}:=\mathfrak S(Q)_{\ab}$. 
In particular, if $\sigma\in \mathfrak S(Q)$ then $F\simeq \mathfrak S(Q)_{\ab}$. 

\item If $\mathfrak S(Q)=\langle \tau \rangle$ with $\tau \in \mathcal S_q\setminus \mathcal A_q$ an involution, then $F\simeq \mathfrak S(Q)$.

\item If $\mathfrak S(Q)=[\mathfrak S(Q),\mathfrak S(Q)]$ then $F=\{\id\}$. In particular, if $\mathfrak S(Q)= \mathcal A_q$ with $q\geq 5$ then $F=\{\id\}$.
\end{enumerate}
\end{coro}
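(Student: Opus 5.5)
The plan is to derive all three items from Theorem \ref{THRFin}(2). That result gives $F\simeq \mathfrak S(Q)/N_Q$ with
$$[\mathfrak S(Q),\mathfrak S(Q)]<N_Q<[W,W]\cap \mathfrak S(Q).$$
In each item we therefore only need to compute or squeeze the two bounds on $N_Q$; no further knowledge of the precise form of $N_Q$ is required.

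For item (1), if $[W,W]\cap \mathfrak S(Q)=[\mathfrak S(Q),\mathfrak S(Q)]$, the two bounds coincide. Hence $N_Q=[\mathfrak S(Q),\mathfrak S(Q)]$ and $F\simeq \mathfrak S(Q)_{\ab}$. If moreover $\sigma\in\mathfrak S(Q)$, then $W=\mathfrak S(Q)$. It follows that $[W,W]\cap\mathfrak S(Q)=[\mathfrak S(Q),\mathfrak S(Q)]$, so the hypothesis of the first part holds.

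For item (3), if $\mathfrak S(Q)$ is perfect, the lower bound gives $N_Q\supset \mathfrak S(Q)$, so $F$ is trivial. For $q\ge 5$ we use that $\mathcal A_q$ is perfect; this is classical, since $\mathcal A_q$ is simple and nonabelian for $q\geq 5$.

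Item (2) is the only one requiring an argument, and it is the main point. Here $\mathfrak S(Q)=\langle\tau\rangle\simeq\Z/2\Z$ is abelian, so the lower bound is trivial. We must show the upper bound is trivial as well, namely $\tau\notin [W,W]$. The tool is the sign homomorphism $\varepsilon:\mathcal S_q\to\{\pm1\}$. Since its target is abelian, $[W,W]\subset \ker\varepsilon\cap W\subset \mathcal A_q$. As $\tau\notin\mathcal A_q$, we get $\tau\notin[W,W]$, so $[W,W]\cap\mathfrak S(Q)=\{\id\}$. Then $N_Q=\{\id\}$ and $F\simeq\mathfrak S(Q)$. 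Note that the hypothesis that $\tau$ is an involution is only used to identify $\mathfrak S(Q)=\{\id,\tau\}$, so that the intersection with $\mathcal A_q$ is trivial. If instead $\tau$ had higher order, $\tau^2$ would lie in $\mathcal A_q$ and the argument would only give partial information.

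Thus the whole corollary reduces to the sandwich in Theorem \ref{THRFin}(2), together with two standard facts: the sign homomorphism has abelian image, and $\mathcal A_q$ is perfect for $q\geq5$. I expect no real obstacle; the only care needed is in item (2), checking that the upper bound is genuinely trivial.
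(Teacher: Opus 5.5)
Your proposal is correct and follows essentially the same route as the paper: all three items are read off from the sandwich $[\mathfrak S(Q),\mathfrak S(Q)]<N_Q<[W,W]\cap\mathfrak S(Q)$ of Theorem \ref{THRFin}(2), exactly as you do. In item (2), your explicit use of the sign homomorphism to get $[W,W]\subset\mathcal A_q$ simply spells out what the paper leaves implicit when it writes ``since $\tau\notin\mathcal A_q$''.
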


\begin{coro} \label{Noniso} Let $A,A'$ and $\QR, \QR'$ be as in Definition \ref{defH}. Then:
\begin{enumerate}
\item If $A\not\simeq A'$ that is if $s\not=s'$, then $\HA$ and $\HAp$ are not isomorphic. 
\item If $A\not= A'$ then $\HA$ and $\HAp$ are not conjugate in $\IET$.
\item If  $\sigma_q \in  \mathfrak S(\QR)$, $\sigma_{q'} \in  \mathfrak S(\QR')$, and $\QR$, $\QR'$ have non isomorphic abelianizations then $\HA$ and $\HAp$ are not isomorphic. 
\end{enumerate}
\end{coro}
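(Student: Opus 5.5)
The statement to prove is Corollary \ref{Noniso}; each item follows from Theorem \ref{THRFin} together with an invariant that can be read off inside the group.

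\emph{Item (1).} By Theorem \ref{THRFin}(1), $T(\HA)$ is the set of torsion elements, so it is a characteristic subgroup and $\HA/T(\HA)\simeq A\simeq \Z^s$. Any isomorphism $\HA\to\HAp$ sends torsion elements to torsion elements, hence sends $T(\HA)$ onto $T(\HAp)$. It therefore induces an isomorphism $\Z^s\simeq\Z^{s'}$, which forces $s=s'$. The torsion-free rank of the abelianization would serve equally well: by Theorem \ref{THRFin}(2) it equals $s$, since $F$ is finite.

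\emph{Item (2).} Suppose $h\HA h^{-1}=\HAp$ with $h\in\IET$. Conjugation by $h$ is an isomorphism, so it carries $T(\HA)$ onto $T(\HAp)$. What must be recovered intrinsically is the set of rotation angles. My plan is to use an $\IET$-conjugacy invariant: the translation-length (or SAF-type) invariant, i.e.\ the homomorphism $\IET\to\R/\Q$, or more simply to the tensor $\R\wedge_\Q\R$, given by the Sah--Arnoux--Fathi invariant. Restricted to rotations this records the angle. An alternative invariant is the rotation-number-like map $f\mapsto\int_0^1 (f(x)-x)\,dx \bmod \Q$. This quantity is invariant under conjugation, because $\int (hfh^{-1}(x)-x)\,dx=\int (f(y)-y)\,dy$ after the substitution $y=h^{-1}(x)$ (pieces are translations, so measure is preserved). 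It is a homomorphism $\IET\to\R$: it is additive because $\int (fg(x)-x)=\int (f(g x)-g x)+\int (g x-x)$. It vanishes modulo $\Q$ on rational $\iet$s, and on $R_a$ it gives $a$ modulo $\Z$. Its image on $\HA$ modulo $\Q$ is therefore exactly $A+\Q$ in $\R/\Q$. Since the $\alpha_i$ are $\Q$-independent and irrational, the image modulo $\Q$ detects the $\Q$-span of $A$; equality of these $\Q$-spans holds whenever the groups are conjugate.

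The main obstacle is that this invariant only sees $A+\Q$, not $A$ itself. To upgrade, I would use that $\HA=T(\HA)\rtimes R_A$ and that every element of $\HA$ is a rotation on each atom of the rational partition by an angle in $A+\frac1q\Z$. The elements acting as a rotation with angle in $A$ modulo $\frac1q\Z$ are then determined up to the finite ambiguity $\frac1q\Z$. Lifting carefully through the conjugation needs extra care, since $h$ need not preserve the rational partition; this is the step I expect to require the most effort.

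\emph{Item (3).} Since $\sigma_q\in\mathfrak S(\QR)$, Corollary \ref{CPAb}(1) gives $F\simeq\mathfrak S(\QR)_{\ab}\simeq Q_{\ab}$, and likewise for $Q'$. By Theorem \ref{THRFin}(2), $\HA_{\ab}\simeq Q_{\ab}\times A$ and $\HAp_{\ab}\simeq Q'_{\ab}\times A'$. The torsion subgroups of these abelianizations are $Q_{\ab}$ and $Q'_{\ab}$. If the groups were isomorphic, their abelianizations would be isomorphic, hence so would the torsion subgroups, contradicting the hypothesis that $Q_{\ab}\not\simeq Q'_{\ab}$.
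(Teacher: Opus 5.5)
Your Items (1) and (3) are correct and follow the paper. In (1), $T(\HA)=\ker\ell$ is characteristic with quotient $A\simeq\Z^s$. In (3), when $\sigma_q\in\mathfrak S(\QR)$, Corollary \ref{CPAb}(1) and Theorem \ref{THRFin}(2) give $(\HA)_{\ab}\simeq \mathfrak S(\QR)_{\ab}\times A$, whose torsion subgroup is $\QR_{\ab}$.

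Item (2) is where the proposal breaks down, and the gap cannot be filled.

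\textbf{Your alternative invariant is identically zero.} Every $\iet$ preserves Lebesgue measure, so $\int_0^1 f(x)\,dx=\int_0^1 x\,dx$. For $R_a$ on $[0,1)$ you get $a(1-a)+(a-1)a=0$, not $a$.

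\textbf{The SAF route only sees $A$ modulo rationals.} This route is exactly the paper's. $\saf$ is a conjugacy-invariant homomorphism into the $\Q$-vector space $\R\wedge_\Q\R$, so it kills the torsion group $T(\HA)$. Also $\saf(R_a)$ equals $a\wedge 1$ up to a nonzero rational factor, so $\saf(\HA)$ is the image of $A$ under $a\mapsto a\wedge 1$. That map has kernel exactly $\Q/\Z$, so $\saf(\HA)$ determines $A+\Q/\Z$ and nothing more. This is finer than the $\Q$-span you mention (it separates $\langle\alpha\rangle$ from $\langle 2\alpha\rangle$), but it is blind to rational shifts of generators.

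\textbf{The postponed ``upgrade'' is impossible, because Item (2) is false as stated.} You rightly flag the obstacle, but here is a counterexample:
\begin{itemize}
\item take $\alpha$ irrational, $A=\langle\alpha\rangle$ and $A'=\langle\alpha+\frac12\rangle$;
\item these differ, since $(n-1)\alpha\equiv\frac12 \bmod 1$ has no solution $n\in\Z$;
\item take $\QR=\QR'=\mathcal G_{\frac14}$, a non-abelian group containing $R_{1/2}$.
\end{itemize}
Then $R_{\alpha+1/2}=R_\alpha R_{1/2}$ gives $\HA=\HAp$. So the two groups are conjugate (by $\id$) although $A\neq A'$.

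The paper's one-line proof has the same hole: its assertion that $A\neq A'$ implies $\saf(\HA)\neq\saf(\HAp)$ ignores the kernel $\Q/\Z$. What the SAF argument genuinely proves, and what you should state instead, is: if $A+\Q/\Z\neq A'+\Q/\Z$ in $\R/\Z$, then $\HA$ and $\HAp$ are not conjugate in $\IET$.
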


\begin{coro} \label{NonisoLamp} Let $A$ and $\QR$ be as in Definition \ref{defH}.

The group $H_{A,Q}$ is  isomorphic to a Lamplighter group $L\wr G$ with $L$ finitely generated and  $G\simeq \Z^k$ if and only if  \ 
$V:=\bigl\langle \ \sigma^p \mathfrak S(Q)\sigma^{-p}, \ p\in \IN \  \bigr\rangle$ is abelian. In this case, $A\simeq \Z^k$ and $Q\simeq L$ is finite and abelian.
\end{coro}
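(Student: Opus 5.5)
We first describe the structure of $\HA$ concretely and then compare it with lamplighter groups. Set $\Gamma_q:=\langle R_{a}E_\tau R_{a}^{-1},\ a\in A,\ \tau\in \mathfrak S(Q)\rangle$. Since $R_{1/q}=E_\sigma$, conjugating $E_\tau$ by a rotation $R_a$ gives a rational \iet\ with respect to the partition shifted by $a$. These partitions are distinct for distinct classes of $a$ modulo $\frac1q\Z$, and $A\cap \Q/\Z=\{0\}$. By Theorem \ref{THRFin}(1), $T(\HA)$ is the normal closure of $Q$ and $\HA\simeq T(\HA)\rtimes A$.

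\emph{Step 1 (structure of $T(\HA)$).} The element $R_{-a}E_\tau R_a$ acts on the shifted partition $\{I_i-a\}$. We group the points of the circle according to their orbit under the finite rotation group $\langle R_{1/q}\rangle$ together with the partition shifted by $a$. The natural guess is $T(\HA)\simeq \bigoplus_{a\in A} V_a$, where $V_a\simeq V$ is generated by the $\sigma^p$-conjugates of $\mathfrak S(Q)$ acting on the $q$-cells. This is exactly where $V$ enters: moving from one translate to the next rotates the labels of the cells by $\sigma$. Two copies attached to different $a,a'$ do not simply commute, however, since their partitions overlap. So we refine: for a finite set $S\subset A$, the group generated by the copies indexed by $S$ acts on the common refinement, and it is a subgroup of a finite symmetric group.

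\emph{Step 2 ($\Leftarrow$).} Assume $V$ is abelian. Then $Q$ is abelian, hence finite abelian. We claim that conjugates $R_a g R_{-a}$ and $R_b h R_{-b}$ with $g,h\in Q$ commute. On each atom of the refinement, both act through permutations lying in conjugates of $V$ by powers of $\sigma$, so they commute. With commutation in hand, we show that the map $\bigoplus_{a\in A}Q\to T(\HA)$, $(g_a)\mapsto \prod_a R_ag_aR_{-a}$, is injective. For this we evaluate on a small interval near a point $x$ lying in a single cell of each relevant partition, and use the independence of the shifts (the $\alpha_i$ are irrational and independent) to isolate each factor. This yields $\HA\simeq Q\wr A$ with $A\simeq\Z^s$.

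\emph{Step 3 ($\Rightarrow$).} Suppose $\HA\simeq L\wr G$ with $G\simeq\Z^k$. The base group $\bigoplus_G L$ is then the unique candidate for the torsion subgroup, so $L$ is torsion and $\bigoplus_G L=T(\HA)$. Indeed, torsion elements of $L\wr G$ lie in the base group since $G$ is torsion free, and by Theorem \ref{THRFin}(1) the torsion elements form $T(\HA)$. Hence $G\simeq \HA/T(\HA)\simeq A$, so $k=s$, and $L$ is finitely generated torsion. The key consequence is that $T(\HA)$ is locally finite with the property that any two $G$-translates of $L$ commute. We then aim to show that $V$ abelian is forced. The commutator $[R_a g R_{-a},h]$ with $a$ very small in $A$ (which is dense) has support near the boundaries of cells. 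Near such a boundary the two partitions nearly coincide and the action realizes the commutator of $\sigma^p$-conjugated elements of $\mathfrak S(Q)$, that is, of the generators of $V$. If $V$ is not abelian, this produces infinitely many pairwise noncommuting finite subgroups of $T(\HA)$ in a configuration impossible in a direct sum of copies of $L$. The cleanest contradiction is an abelianization or center count: in $L\wr G$ the base group $T$ satisfies $[T,T]=\bigoplus[L,L]$, which is $G$-equivariantly a direct sum. We compare this with an explicit noncommuting pair from $V$.

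\emph{Main obstacle.} The hard part is Step 3: converting an abstract isomorphism into the commutativity of $V$. The first thing to try is to show that $T(\HA)$ being a restricted direct sum of copies of a finite group indexed by $\Z^s$ forces $T(\HA)$ to have finite-index abelian normal subgroups of a controlled form. Failing that, we would compute $[T,T]$ as an $A$-module in both models and show it is finitely generated as a $\Z[A]$-module of a specific rank only when $V$ is abelian. Finally, $Q\simeq L$ follows by comparing $T/[\HA,\HA]$-type invariants, namely the coinvariants $T_A\simeq L$ in $L\wr G$ against $Q$, using Theorem \ref{THRFin}(2).
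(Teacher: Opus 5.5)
Your Step 2 is essentially the paper's proof of the reverse implication. All local permutations of the elements $R_agR_{-a}$ lie in $V$; this group is normalised by $\sigma$, so your ``conjugates of $V$'' are just $V$. Hence commutators have trivial local permutations and vanish by Proposition \ref{critere Id et abelien}, and the disjoint classes $a+\frac1q\Z$ isolate the factors, giving $\HA\simeq Q\wr A$.

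The genuine gap is Step 3, which never shows that $V$ is abelian. The contradiction you look for is not available as you set it up. A noncommuting pair in $V$ lifts, through the surjection $\omega_x\colon\ker\ell\to V$ of Lemma \ref{gene wx(kerl)}, to a noncommuting pair in $T(\HA)$. But $\bigoplus_G L$ contains such pairs as soon as $L$ is nonabelian, and nothing in your argument excludes that. Likewise $[T,T]=\bigoplus_G[L,L]$ holds for every $L$, and the centre and $\Z[A]$-module computations are only announced. The missing fact is that $L$ is abelian. Once you have it, $\ker\ell=T(\HA)\simeq\bigoplus_G L$ is abelian, so $V=\omega_x(\ker\ell)$ is abelian as a homomorphic image; this is the paper's two-line argument.

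The paper obtains this in Lemma \ref{LFinAb}: $L$ is torsion and finitely generated, hence finite, and then abelian by the results of \cite[Section 4]{DFG} on lamplighter subgroups of $\IET$. Your reason for $L$ being torsion (``unique candidate'') is circular. Instead, note that $[\HA,\HA]\subset\ker\ell$ is torsion, while $[l,g]=l\cdot gl^{-1}g^{-1}$ has the same order as $l$ for $\id\neq g\in G$. This needs $k\geq 1$, which the statement implicitly assumes. Finiteness of $L$ then follows from local finiteness of $\ker\ell$ (Lemma \ref{L2EM}(4)).

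Your localisation idea can also be completed, without the external citation, if you aim it at the centre of $V$.
\begin{enumerate}
\item Take $a,a''\in A$ with $\frac jq<a<a''<\frac{j+1}q$. Lemma \ref{perm loc conjugate commutator} shows that $h=R_aE_\tau R_{-a}(R_{a''}E_\tau R_{-a''})^{-1}$ has local permutation $[\sigma^j\tau\sigma^{-j},\sigma]$ on $[a-\frac jq,a''-\frac jq)$ and $\id$ elsewhere.
\item The image $\Lambda$ of each $L^g$ is finite, and its centraliser has finite index in $\ker\ell$. Place infinitely many such $h$ with disjoint supports inside an interval where $x\mapsto\omega_x|_\Lambda$ is constant, and pigeonhole over cosets of the centraliser. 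This shows that $[\sigma^j\tau\sigma^{-j},\sigma]$ commutes with every $\omega_x(\Lambda)$, hence lies in $Z(V)$.
\item Then $\sigma$ acts trivially on $V/Z(V)$, so it centralises $[V,V]$. Hence $[V,V]\le\langle\sigma\rangle$, and $[W,W]=W_0\le Z(V)[V,V]$ is abelian.
\item By Theorem \ref{THF0}(2), $\HA$ is at most $2$-solvable. But for $L\neq\{\id\}$ the derived length of $L\wr G$ is that of $L$ plus one, using $\bigl[[l_1,g],[l_2,h]\bigr]=[l_1,l_2]$ for distinct $g,h\neq\id$. So $L$ is abelian.
\end{enumerate}
Finally, to get $Q\simeq L$, compare the abelianizations of $L\wr G\simeq Q\wr A$. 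Theorem \ref{THRFin}(2) alone only gives $L_{\ab}\simeq\mathfrak S(Q)/N_Q$.

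One caveat affects your Step 2 and the paper's proof alike. If $Q$ contains a nontrivial rotation $R_{p/q}$, then $R_aR_{p/q}R_{-a}=R_{p/q}$ for all $a$. So $\bigoplus_{a\in A}Q\to T(\HA)$ is not injective, and break points detect nothing. The reverse implication actually fails there: for $q=2$ and $Q=\langle R_{1/2}\rangle$, $V=\mathcal S_2$ is abelian but $\HA\simeq\Z^s\times\Z/2\Z$. This is not of the form $L\wr\Z^k$ with $k\geq1$, since such groups are nonabelian if $L\neq\{\id\}$ and torsion free otherwise. Your isolation argument is correct under the extra hypothesis $Q\cap\mathbb S^1=\{\id\}$. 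Then every $g\neq\id$ in $Q$ is discontinuous on $\R/\Z$, and the classes $a+\frac1q\Z$, $a\in A$, are pairwise disjoint since $A\cap\Q/\Z=\{0\}$.
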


The proofs are based on two key ingredients: The first one is a morphism $\ell : \HA\to \frac{\R}{ \Z}$ that describes the irrational part of the translations of the $\HA$-elements. The second one is the local permutations which are morphisms defined on $\ker \ell$ into $\mathcal S_q$, that encode the way how the rationals points, $\dis \frac{p}{q}$ with $p\in \IN$, are permuted. This text also contains, as an appendix, generalities and relevant non original material on commutators, wreath products, solvable groups and virtually solvable groups. 

\subsection*{Acknowledgments.} We thank Y. Cornulier for fruitful discussions, especially for explanations about its proof of stability by group extensions for the property of being virtually solvable. This proof is resumed in subsection \ref{AppVR} of the Appendix. We thank M. Belliart for helping us with the construction that leads to Proposition \ref{nSol}.
We acknowledge support from the MathAmSud Project GDG 18-MATH-08, the Labex CEMPI (ANR-11-LABX-0007-01), ANR Gromeov, the University of Lille, the I.F.U.M.I Laboratorio de la Plata and Project CSIC 149/348. 

\section{Preliminaries on \iett and subgroups of rational \iett}

\begin{defi} \ 

Let $f\in \IET$ and $0=a_0<a_1< \cdots < a_i< \cdots < a_m=1$ such that $f_{ \vert_{[a_i, a_{i+1})}} \text{ is a translation.}$

\begin{itemize}
\item The translation vector of $f$ is $\delta(f)=(\delta_i)_{i=1, \cdots ,m}$ defined by 
 $f_{ \vert_{[a_{i-1}, a_{i})}}(x)= x+\delta_i$, for $i\in \{1, \cdots ,m\}$.

\item The length vector of $f$ is $\lambda(f)=(\lambda_i)_{i=1, \cdots ,m}$ where $\lambda_i=a_{i} - a_{i-1}$.

\item The permutation associated to $f$ is $\pi(f)=\pi \in \mathcal S_m$ defined by $f( [a_{i-1}, a_{i}) )=J_{\pi(i)}$ where the $J_j$'s are the ordered images of the intervals $\ [a_{i-1}, a_{i})$.

\smallskip

\ \hskip -1.4cm From now on, we assume that for any $i\in \{1,\cdots,m\}$, the point $a_i$ is  discontinuity point of $f$.

\ \hskip -1.4cm After a possible change of the subdivision $\{a_i\}$, there is no loss of generality in this requirement.

\smallskip

\item The break point set of $f$ is $\BP(f)=\{a_i, \ i=0, \cdots ,m-1\}$.

\item Let $x\in [0,1)$, the $f$-orbit of $x$ is $\mathcal O_f(x)=\{f^n(x), \ n\in \Z\}$.

\item More generally, let $G$ be a subgroup of $\IET$ and  $x\in [0,1)$, 

\hskip 4mm $\pbul$ the break point set of $G$ is $\BP(G)=\bigcup_{g\in G} \BP(g)$ and 

\hskip 4mm $\pbul$ the $G$-orbit of $x\in [0,1)$ is $\mathcal O_G(x)=\{g(x), \ g\in G\}$.

\end{itemize}
\end{defi}

\begin{definota} \ 
\begin{itemize}

\item The group of \ rationals $\iet$s \ is \ \ $\boldsymbol{\RIET}=\{f\in \IET, \ \lambda_i(f) \in \Q\}$.

\item The group of $\Delta$-rational $\iet$s is $\DIET=\{f\in \IET, \ \delta_i(f) \in \Q\}$.

\end{itemize}

\end{definota}

\medskip

\begin{rema}\label{LenBPTran} We recall that the break points and the translations of an $\iet$ $f$, with associated permutation $\pi$, are related to its lengths by the following formulas. Let $i\in\{1,\cdots m\}$ then 
$$ (1) \ \ a_i=\sum_{j=1}^{i} \lambda_j(f) \text{ \ \ and \ \ } 
(2) \ \ \delta_i(f) =  \sum_{j:\pi(j)<\pi (i)} \hskip -4mm \lambda_j(f)  \ - \ \sum_{j=1}^{i-1} \lambda_j(f)$$
\end{rema}

\begin{propri}\label{base} \ 
\begin{enumerate}
\item $\dis \RIET \subsetneqq \DIET$ and $\dis \DIET\cap \mathbb S^1 =\RIET\cap \mathbb S^1 = \{R_a, a\in \Q\}$.

\item If $Q$ is a finitely generated subgroup of $\RIET $ then there exists $q\in \N^*$ such that $Q<\mathcal G_{\frac{1}{q}}$.

\item $\DIET$ is locally finite, that is, its finitely generated subgroups are finite. In particular, any $f\in \DIET$ has finite order.
\end{enumerate}
\end{propri}

\begin{proof} \ 
\begin{enumerate}
\item $\dis \RIET \subset \DIET$ by Formula (2) of Remark \ref{LenBPTran}. Moreover, for $g\in \RIET \setminus \mathbb S^1$ and $\alpha\notin \Q$, the map $R_\alpha g R_\alpha ^{-1} \in \DIET \setminus \RIET$. Finally, it is obvious that $\dis  \{R_a, a\in \Q\} \subset \RIET\cap \mathbb S^1 \subset \DIET\cap \mathbb S^1 $ ; in addition, if $f \in  \DIET\cap \mathbb S^1$  then $f=R_a$,  $\delta(f) = \{a\} \mod 1$ and $\delta(f)\subset \Q$, therefore $a \in \Q$.

\medskip

\item Let $Q= \langle g_j, \ j=1, \cdots , m\  \rangle$ and $q$ be the least common multiple of the denominators of the lengths of the $g_j$'s. According to  Remark \ref{LenBPTran}, for $f\in Q$ we have $\BP(f) \cup f(\BP(f) ) \subset \{\frac{i}{q}, i=0, \cdots q-1\}$ and then $f\in \mathcal G_{\frac{1}{q}}$.

\medskip

\item Let $D=\langle h_j \rangle$ be a finitely generated subgroup of $\DIET$. By definition all the $\delta_i(h_j)$ are rational and denoting by $q$ the least common multiple of the denominators of the $\delta_i(h_j)$, we  have for any $x \in [0,1)$
$$\mathcal O_{D} (x) \subset \left\{ x+ \frac{p}{q}, \ p\in \Z\right\} \cap [0,1).$$
Then, $\mathcal O_{D} (x) $ is finite and $\BP(D)\subset \cup_j\mathcal O_{D} (\BP(h_j))$ is also finite. Therefore $D$ is finite since there are only finitely many intervals delimited by the points of $\BP(D)$ and then only finitely many possible permutations of these intervals.
\end{enumerate}
\end{proof}

\section{Basic properties of $\HA$} \label{Family 1}

\subsection{The morphism $\ell$ and its kernel}  \ 

\smallskip

From now on, we assume that $H$ is a subgroup of $\IET$ generated by finitely many rotations and rational $\iet$s. According to Properties \ref{base} (2), there exists $A$ and $Q$ as in Definition \ref{defH} such that $H=\HA$. Namely, 
$$\bds{H=\langle  R_{\alpha_1}, \cdots , R_{\alpha_s} \ , \ g_1, \cdots ,g_m \rangle \quad \text{ where} \quad A=\langle \alpha_i \rangle \simeq \Z^s \text{ and  } \QR=\langle g_j\rangle<\mathcal G_{\frac{1}{q}}}$$  

\medskip

Note that any $f\in H$ can be written as 
$$f= R_{a_n} t_n R_{a_{n-1}}t_{n-1} \cdots R_{a_1} t_1 R_{a_0}\mbox{ where } t_j \in Q \mbox{ and } a_j \in A$$
and it is easy to see that the irrational part in the translations of $f$ is constant (i.e. $f(x) \mod \Q$ does not depend on the point $x$). More precisely, we have

\begin{lemm} \label{L1EM} \ 

Let $f \in H$, there exists a unique $\mathfrak n(f)=(\mathfrak n_1(f), ..., \mathfrak n_s(f)) \in \Z^s$ such that for all $x\in [0,1)$, it holds that $$f(x) = x + \sum_{i=1} ^s \mathfrak n_i(f) \alpha_i + \frac{p_f(x)}{q} \mbox{ , \  where } p_f(x) \in \mathbb Z.$$
\end{lemm}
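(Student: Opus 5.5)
Existence will be proved by induction on the length of a word in the generators, and uniqueness will come from the $\Q$-independence of the $\alpha_i$'s. Define $P$ as the set of $f\in H$ for which some $\mathfrak n(f)\in\Z^s$ exists with
\[
f(x)-x-\sum_{i=1}^s \mathfrak n_i(f)\alpha_i\in \tfrac{1}{q}\Z \quad\text{for all } x\in[0,1).
\]
I will show that $P$ contains the generators and is closed under composition and inverses, so that $P=H$.

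For the generators: a rotation $R_{\alpha_j}$ satisfies $R_{\alpha_j}(x)=x+\alpha_j-\epsilon(x)$ with $\epsilon(x)\in\{0,1\}$, so it lies in $P$ with $\mathfrak n(R_{\alpha_j})=e_j$ and $p(x)=-q\epsilon(x)$. An element $g\in\QR<\mathcal G_{\frac1q}$ translates each $I_i$ onto some $I_{\tau(i)}$, so its translations are $\frac{\tau(i)-i}{q}\in\frac1q\Z$; hence $g\in P$ with $\mathfrak n(g)=0$.

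For closure under composition, take $f,g\in P$. Then
\[
f(g(x))=g(x)+\sum_i \mathfrak n_i(f)\alpha_i+\tfrac{p_f(g(x))}{q}=x+\sum_i\bigl(\mathfrak n_i(f)+\mathfrak n_i(g)\bigr)\alpha_i+\tfrac{p_g(x)+p_f(g(x))}{q},
\]
so $f\circ g\in P$ with $\mathfrak n(fg)=\mathfrak n(f)+\mathfrak n(g)$. For inverses, apply the identity for $f$ at the point $y=f^{-1}(x)$: this gives $x=f^{-1}(x)+\sum\mathfrak n_i(f)\alpha_i+p_f(y)/q$, so $f^{-1}\in P$ with $\mathfrak n(f^{-1})=-\mathfrak n(f)$. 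Since $H$ is generated by these elements, $P=H$.

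For uniqueness, suppose $\mathfrak n$ and $\mathfrak n'$ both work for $f$. Evaluating at a single point $x$ and subtracting gives $\sum_i(\mathfrak n_i-\mathfrak n'_i)\alpha_i\in\frac1q\Z\subset\Q$, read in $\R$ where the $\alpha_i$ are taken as real lifts. Multiplying by $q$ yields an integer relation among $q\alpha_1,\dots,q\alpha_s$ and $1$.

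The main obstacle is whether the hypothesis of Definition \ref{defH} gives independence together with $1$. Definition \ref{defH} states $\Q$-independence in $\R/\Z$, meaning no nontrivial rational combination lies in $\Q/\Z$; this is exactly independence of $1,\alpha_1,\dots,\alpha_s$ over $\Q$ in $\R$, and it is also what is needed for $R_A\simeq\Z^s$. Under this reading, the relation above forces $\mathfrak n=\mathfrak n'$, and the integers $p_f(x)$ are then determined as well.
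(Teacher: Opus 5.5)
Your proof is correct and is essentially the paper's argument: the paper simply inducts on the word length of $f$ in the generators $R_{\alpha_i}, g_j$, and the same composition computation reappears in the proof of Lemma \ref{L2EM}(1). Your ``contains the generators and is closed under composition and inverses'' formulation is just that induction made explicit. Your uniqueness step, which the paper leaves implicit, is also right. Your reading of the independence hypothesis is the one forced by the paper's own claim $A\simeq\mathbb Z^s$: a surjection $\mathbb Z^s\to A\simeq\mathbb Z^s$ is injective, and injectivity is equivalent to $1,\alpha_1,\dots,\alpha_s$ being $\mathbb Q$-independent in $\mathbb R$.
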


\medskip

\begin{proof} As for the proof of \cite[Lemma 7.2]{GLtg}, we argue by induction on the length $L_S(f)$ of $f\in H$ as a word in $S=\{R_{\alpha_i} (i=1,...,s)  \ ; \  g_j (i=1,...,m) \}$.
\end{proof}

\bigskip \bigskip

\begin{lemm} \label{L2EM} \ 
\begin{enumerate}
\item The map $\ell : H \to \R/\Z, \ f \mapsto \sum \mathfrak n_i(f) \alpha_i$ is a morphism. For any $a\in A$, $\ell(R_a)=a$  and $\ell(H)=A$.
\item Any $f\in H$ can be written as $f= P_f R_{\ell (f)} = R_{\ell(f)} Q_f $ where  $P_f,Q_f \in \ker \ell$.
\item The $\ker \ell$-orbit of $x\in [0,\frac{1}{q})$ is contained in the finite set $\Bigl\{ x+\frac{p}{q}, \ p \in \{0, \cdots , q-1 \}\Bigr\} $.
\item One has $[H,H] \lhd \ker \ell$. Denoting by $T(H)$ the set of the torsion elements of $H$, we have $\ker \ell=T(H)=H\cap \DIET$. In particular, $T(H)$ is a locally finite normal subgroup of $H$
which contains $[H,H]$.

\end{enumerate}
\end{lemm}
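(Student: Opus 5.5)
For item (1), I will use Lemma \ref{L1EM}. Write $f,g\in H$ as $f(x)=x+\ell(f)+p_f(x)/q$ and similarly for $g$. Then
\[
fg(x)=g(x)+\ell(f)+\frac{p_f(g(x))}{q}=x+\ell(f)+\ell(g)+\frac{p_g(x)+p_f(g(x))}{q}.
\]
By the uniqueness in Lemma \ref{L1EM}, $\mathfrak n(fg)=\mathfrak n(f)+\mathfrak n(g)$, so $\ell$ is a morphism. This uniqueness is where the $\Q$-independence of the $\alpha_i$ enters: they are independent mod $\Q$. For $a=\sum n_i\alpha_i\in A$ we have $R_a(x)=x+a \bmod 1$, so $\ell(R_a)=a$ up to the rational correction absorbed by $p$. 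Every generator $g_j$ translates by multiples of $1/q$, so $\ell(g_j)=0$. Hence $\ell(H)=\langle\alpha_i\rangle=A$.

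For item (2), I set $P_f=fR_{\ell(f)}^{-1}$ and $Q_f=R_{\ell(f)}^{-1}f$. Since $\ell(R_{\ell(f)})=\ell(f)$ and $\ell$ is a morphism, both lie in $\ker\ell$.

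For item (3), take $f\in\ker\ell$. Then $f(x)=x+p_f(x)/q \pmod 1$, so $f(x)\in\{x+p/q\}\cap[0,1)$. For $x\in[0,\tfrac1q)$ these points are exactly $x+p/q$ with $p\in\{0,\dots,q-1\}$.

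For item (4), I proceed in four steps.
\begin{itemize}
\item Since $\R/\Z$ is abelian, $[H,H]\subset\ker\ell$, and $[H,H]$ is normal in $H$, hence in $\ker\ell$.
\item By Lemma \ref{L1EM}, every translation of $f\in\ker\ell$ lies in $\frac1q\Z$, so $\ker\ell\subset H\cap\DIET$. Conversely, if $f\in H\cap\DIET$, its translations are rational. Then $\ell(f)\equiv$ a rational mod $\Z$, and independence of $1,\alpha_1,\dots,\alpha_s$ over $\Q$ forces $\mathfrak n(f)=0$. So $\ker\ell=H\cap\DIET$.
\item By Properties \ref{base}(3), elements of $\DIET$ have finite order, giving $\ker\ell\subset T(H)$. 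Conversely, if $f^n=\id$ then $n\ell(f)=\ell(f^n)=0$ in $A\simeq\Z^s$, which is torsion-free, so $\ell(f)=0$.
\item Local finiteness of $T(H)$ follows from that of $\DIET$ (Properties \ref{base}(3)), and normality follows because $T(H)$ is a kernel.
\end{itemize}

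The main obstacle is the identification $\ell(H)\cong\Z^s$ inside $\R/\Z$, i.e.\ that $\sum n_i\alpha_i\in\Q$ forces all $n_i=0$. This is exactly the uniqueness in Lemma \ref{L1EM}; I will read the hypothesis "$\Q$-independent" as independence of $1,\alpha_1,\dots,\alpha_s$, which is needed for $R_A\simeq\Z^s$.
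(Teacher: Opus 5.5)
Your proposal is correct and follows the paper's proof essentially step for step. It uses the same composition computation with the uniqueness in Lemma \ref{L1EM} for (1), the same choice $P_f=fR_{-\ell(f)}$, $Q_f=R_{-\ell(f)}f$ for (2), and the same orbit argument for (3); for (4) it appeals, as the paper does, to Properties \ref{base}(3) for $\ker\ell\subset T(H)$, and uses torsion-freeness of $A$ for the converse. Your reading of the hypothesis as $\Q$-independence of $1,\alpha_1,\dots,\alpha_s$ is exactly what $A\simeq\Z^s$ amounts to. Your direct proof of $H\cap\DIET\subset\ker\ell$ just spells out an inclusion the paper leaves implicit.
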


\begin{proof} \  

\begin{enumerate}

\item We obviously have $\ell(\id) = 0$ and $\ell(R_{\alpha_i})=\alpha_i$.

Given $f,h \in H$ and $x\in [0,1)$, the computation of $hf(x)$ leads to   
$$hf(x)= x + \ (\ell(f) +\ell (h))\  + \frac{p_f(x)+p_h(f(x))}{q}. \quad \text{ Therefore}$$ 
$$\ell(hf)=\ell (f) +\ell (h) \mbox{ \quad and \quad} p_{hf}(x)=p_f(x)+p_h(f(x)).$$

In particular, $\ell$ is a morphism from $H$ to $\R/\Z$. This and the first observation imply that    for any $a\in A$, $\ell(R_a)=a$ and $\ell(H)=A$.

\medskip

\item Let $f\in H$, one has $\ds \ell \bigl(f R_{-\ell(f)}\bigr) = \ell (f) +\ell (R_{-\ell(f)})=\ell (f) -\ell(f)=0$. Then \break
{$\dis P_f =f \ R_{-\ell(f)} \in \ker \ell$ \ and $\dis f= P _f \ R_{\ell(f)}$. \hfill\break Similarly, we get $Q_f= R_{-\ell(f)} \ f \in \ker \ell$ \ and $f= R_{\ell(f)} \ Q_f$.}

\item Let $f\in \ker \ell$ and $x\in [0, \frac{1}{q})$, Lemma \ref{L1EM} gives $f(x)= x+\frac{p_f(x)}{q}$ for some $p_f(x)\in \Z$, and $f(x)\in[0,1)$ forces  $p_f(x)\in \{0,1,\cdots, q-1\}$. 

\item As $\ell$ is a morphism into an abelian group and $[H,H]\lhd H$, it is plain that $[H,H]\lhd \ker \ell$.

\noindent In addition, according to the previous item, $\ker \ell$ is contained in $\DIET$ so by Properties \ref{base} (3) it is locally finite. In particular $\ker \ell< T(H)$, and the converse is obvious by contrapositive and Lemma \ref{L1EM}.
\end{enumerate} \vskip -0.6cm \end{proof}

As we are interested in properties of solvability  and virtual solvability, the next Lemma will be useful.

\begin{lemm} \label{CritVSol} \ 
The following assertions are equivalent 
\begin{enumerate}
\item $H$ is virtually solvable.
\item $\ker \ell $ is virtually solvable.
\item $[H,H]$ is virtually solvable.
\end{enumerate} 
\end{lemm}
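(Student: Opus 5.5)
We prove the cycle of implications $(1)\Rightarrow(3)\Rightarrow(2)\Rightarrow(1)$, using only the structure provided by Lemma \ref{L2EM}: $\ker \ell$ is a normal subgroup of $H$ with abelian quotient $H/\ker\ell\simeq A\simeq \Z^s$, and $[H,H]\lhd \ker \ell$.

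\emph{$(1)\Rightarrow(3)$.} Being virtually solvable passes to subgroups. If $S<H$ is solvable of finite index, then $S\cap[H,H]$ is a solvable subgroup of $[H,H]$. Its index in $[H,H]$ is at most $[H:S]$. Hence $[H,H]$ is virtually solvable.

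\emph{$(3)\Rightarrow(2)$.} The quotient $\ker\ell/[H,H]$ is a subgroup of $H_{\ab}$, hence abelian. So $\ker\ell$ is an extension of the virtually solvable group $[H,H]$ by an abelian group. The argument splits according to how much of the Appendix one wants to use.

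\begin{itemize}
\item If we invoke the stability of virtual solvability under extensions with solvable (or virtually solvable) quotient, recalled in subsection \ref{AppVR} of the Appendix, following Cornulier, we conclude at once that $\ker \ell$ is virtually solvable.
\item Alternatively, there is a more hands-on route. Let $S<[H,H]$ be solvable of finite index, and replace $S$ by its normal core in $\ker\ell$. This core is still of finite index in $[H,H]$, because $[H,H]$ is normal in $\ker\ell$ and has only finitely many subgroups of a given index; this requires $[H,H]$ to be finitely generated, which we do not know a priori, so we fall back on the general extension result. Then $\ker\ell/S$ is an extension of the finite group $[H,H]/S$ by an abelian group.
\end{itemize}

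\emph{$(2)\Rightarrow(1)$.} This is the same pattern. $H$ is an extension of the virtually solvable group $\ker\ell$ by $\Z^s$, so the same extension result gives that $H$ is virtually solvable. One could also argue directly here. Since $\ker\ell$ is locally finite, not finitely generated, the direct approach would try to find a characteristic solvable subgroup of finite index in $\ker\ell$. One would use that solvable subgroups of finite index can be chosen normal via a bounded derived length and index argument. This is exactly the delicate point.

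\textbf{Main obstacle.} The crux in both nontrivial implications is that an extension of a virtually solvable group by a (virtually) solvable group is virtually solvable when the normal subgroup is not finitely generated. A finite-index solvable subgroup need not have a finite-index normal solvable refinement invariant under the ambient group. I would handle this by quoting the Appendix result, which is Cornulier's stability theorem. If a self-contained argument were needed, I would try the following.

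\begin{itemize}
\item In a virtually solvable group $N$, consider the subgroup generated by all normal solvable subgroups of derived length at most $d$ and index at most $k$.
\item Show that it is solvable of bounded length and of finite index.
\item Conclude that it is characteristic in $N$, hence normal in the extension, and that the quotient is finite-by-solvable.
\end{itemize}
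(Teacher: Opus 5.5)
Your proof is correct and uses the same two facts as the paper: virtual solvability passes to subgroups, and it is stable under extensions (Cornulier's result in the Appendix). The only difference is the order of the cycle: the paper gets $(1)\Rightarrow(2)\Rightarrow(3)$ by passing to subgroups and then $(3)\Rightarrow(1)$ in one step because $H/[H,H]$ is abelian, so it applies the extension result once instead of twice. Your hands-on alternatives and the self-contained sketch can simply be dropped (incidentally, $\ker\ell$ need not be infinitely generated: it is the finite group $Q$ when $Q\subset\mathbb S^1$).
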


\begin{proof}
The implications $(1) \Longrightarrow (2)$ and $(2) \Longrightarrow  (3)$ follow from the well-known fact that the class of virtually solvable groups is stable by taking subgroups (see Appendix \ref{AppVR}). 

\medskip

It remains to prove that  $(3) \Longrightarrow (1)$. As $ [H,H]$ is virtually solvable and $H/[H,H]$ is abelian then, according to the stability by extension of the virtually solvable class (see Appendix \ref{AppVR}), we get that $H$ is virtually solvable. \end{proof}

\subsection{Generating sets for $\ker\ell$ and $[H,H]$}
\begin{lemm}  \label{L3EM} \ 
\begin{enumerate}
\item The group $\ker \ell$ is generated by $\bigl\{ \dis R_a g R_a^{-1} \ ; \ g \in \langle g_j \rangle =\QR,  \  a \in A \bigr\}$.

\medskip

\item The group $[H,H]$ is generated by the set
$$\dis  \Bigl\{  \ 
[R_a,t] \ , \ [t_1,t_2] \ \ ; \ \   a \in A \ \text{ and } \  t,t_1,t_2\in \QR \ 
\Bigr\}$$
\end{enumerate}
\end{lemm}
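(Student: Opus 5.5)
For Item (1), write $K$ for the subgroup generated by $\{R_a g R_a^{-1} : g\in Q, a\in A\}$. Since $\ell(R_a g R_a^{-1}) = a+\ell(g)-a=\ell(g)$ and $\ell(g)=0$ for $g\in Q$ (Lemma \ref{L1EM} applied to rational $\iet$s, or directly since $Q\subset\DIET$ gives $\mathfrak n(g)=0$ by uniqueness), we get $K<\ker\ell$. For the reverse inclusion, take $f\in\ker\ell$ and write it as a word $f=R_{a_n}t_nR_{a_{n-1}}t_{n-1}\cdots R_{a_1}t_1R_{a_0}$ with $t_j\in Q$ and $a_j\in A$, as noted before Lemma \ref{L1EM}. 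Set $b_k=a_n+\cdots+a_k$ (partial sums from the left). Telescoping gives
$$f=(R_{b_n}t_nR_{b_n}^{-1})(R_{b_{n-1}}t_{n-1}R_{b_{n-1}}^{-1})\cdots(R_{b_1}t_1R_{b_1}^{-1})\,R_{b_0}.$$
Applying $\ell$ and using $f\in\ker\ell$ forces $b_0=\ell(f)=0$, so $R_{b_0}=\id$ and $f\in K$. The only point needing care is that $\ell(R_b)=b$ in $\R/\Z$ with $b\in A$ determines $R_b$, which holds because $\ell|_{R_A}$ is injective (Lemma \ref{L2EM}(1)).

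For Item (2), let $C$ be the group generated by the listed commutators; clearly $C<[H,H]$. The generating set $S$ of $H$ consists of the $R_{\alpha_i}$ and the $g_j$. I would use the standard fact that $[H,H]$ is the normal closure of the commutators of generators, and show that $C$ is normal in $H$ and contains all $[s,s']$ for $s,s'\in S$. Commutators between rotations are trivial, those between two elements of $Q$ are generators of $C$, and $[R_a,t]$ is a generator. It remains to prove normality, which is the main obstacle. It suffices to check invariance of $C$ under conjugation by $R_b$ ($b\in A$) and by $u\in Q$.

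Conjugation by $u\in Q$ sends $[t_1,t_2]$ to $[ut_1u^{-1},ut_2u^{-1}]$, which is again a generator. For $[R_a,t]$, write $u[R_a,t]u^{-1}=[uR_au^{-1}, utu^{-1}]$ and expand using commutator identities in the form
$$u[R_a,t]u^{-1}=[u,R_a]\,[R_a,ut]\,$$
(with the convention $[x,y]=xyx^{-1}y^{-1}$: $[u,R_a][R_a,ut]=uR_au^{-1}R_a^{-1}R_a\,ut\,R_a^{-1}t^{-1}u^{-1}$, which simplifies to $u[R_a,t]u^{-1}$). Since $[u,R_a]=[R_a,u]^{-1}$ and $ut\in Q$, both factors lie in $C$. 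Conjugation by $R_b$ sends $[R_a,t]$ to $R_b R_a t R_a^{-1} t^{-1} R_b^{-1}=[R_{a+b},t][R_b,t]^{-1}$, which lies in $C$. It sends $[t_1,t_2]$ to $R_bt_1t_2t_1^{-1}t_2^{-1}R_b^{-1}=[R_b,t_1t_2t_1^{-1}t_2^{-1}]\cdot[t_1,t_2]$, using $R_bxR_b^{-1}=[R_b,x]x$ with $x=[t_1,t_2]\in Q$. Both factors are in $C$.

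Hence $C\lhd H$, and $C$ contains every commutator of generators. Therefore $H/C$ is generated by pairwise commuting images and is abelian, which gives $[H,H]<C$ and hence equality.
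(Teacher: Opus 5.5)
Your proof is correct and follows the paper's argument essentially verbatim. Item (1) uses the same telescoping rewriting of $f=R_{a_n}t_n\cdots t_1R_{a_0}$ into conjugates $R_{b_k}t_kR_{b_k}^{-1}$ followed by $R_{b_0}$. Item (2) uses the same normality check, with the identities $u[R_a,t]u^{-1}=[u,R_a][R_a,ut]$, $R_b[R_a,t]R_b^{-1}=[R_{a+b},t][R_b,t]^{-1}$ and $R_bxR_b^{-1}=[R_b,x]x$, followed by the fact that $[H,H]$ is the normal closure of commutators of generators, which you phrase equivalently as $H/C$ being abelian.
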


\begin{proof} \ 
\begin{enumerate}
\item On one side, it is clear that any $R_a g R_a^{-1}$, with $g \in \QR$ and $a \in A$, belongs to $\ker \ell$.

On the other side, we previously noted that any $f\in G$ can be written as 
$$f=R_{a_n} t_n R_{a_{n-1}}t_{n-1} \cdots \ R_{a_1} t_1 R_{a_0}\mbox{ where } t_j \in \QR \mbox{ and }  a_j \in A , \mbox{ then }$$
$$f= R_{a_n} t_n R_{a_n} ^{-1}  \ \  R_{a_n+a_{n-1}} t_{n-1}  R_{a_n+a_{n-1}}^{-1} \     \cdots \ R_{a_n+...+a_1} t_1 R_{a_n+\dots+a_1} ^{-1} \ \ R_{a_n+\dots+a_1+a_0} $$

Moreover, as $\ell$ is morphism and  $\ell(R_a g R_a^{-1})=0$, we have $\ell(f)= \ell (R_{a_n+\dots+a_1+a_0} )$. Then  $f\in \ker \ell$ if and only if $\sum a_j=0$, thus any $f\in \ker \ell$ is the product of elements of the form $R_{a} t R_{a} ^{-1}$ with $t \in \QR$ and $a \in A$.
\end{enumerate}

\medskip

Before showing Item (2), we state the following general
\begin{propri} \label{CommPtGene}  
Let $G$ be a group and $a,b, h \in G$, it holds that 
$$[b,a]=[a,b]^{-1}, \ \ h[a,b]h^{-1}=[hah^{-1},hbh^{-1}] \text{ \ and \ } ba=[b,a]ab=ab[b^{-1}, a^{-1}].$$
\end{propri}

\begin{enumerate}
\item[(2)] $-$ We first check that the group 
$\dis H_1=\Bigl\langle \ [R_a,t] \ , \ [t_1,t_2] \ \ ; \ \  \left\{\begin{array}{ll} a \in A \cr t,t_1,t_2\in \QR\end{array} \right\} \ \Bigr\rangle$ 
 is normal in $H$. More precisely, we prove that $H_1$ is stable under conjugacy by any $R_c$ with $c\in A$ and by any $T\in \QR$. Indeed 
 
\medskip
 
$\pbul$ $\dis R_c [R_a,t] R_c ^{-1} = R_c R_a t R_a ^{-1} \ t^{-1} R_c ^{-1} = \underbrace{R_c R_a t R_a ^{-1} \ \boldsymbol{R_c^{-1} t^{-1} }}_{[R_c R_a, t ]}  \underbrace{\boldsymbol{\ t R_c} \ t^{-1} R_c ^{-1}}_{[t,  R_c ]} \in H_1$, 
 
$\pbul$ $\dis R_c [t_1,t_2] R_c^{-1}= \bigl[R_c,[t_1,t_2]\bigr] \ [t_1,t_2]\in H_1$ by Properties \ref{CommPtGene},

\medskip
 
$\pbul$ $\dis T  [R_a,t] T^{-1} = T  R_a \ t R_a ^{-1} \ t^{-1} T^{-1}
=\underbrace{T  R_a \boldsymbol{\ T^{-1} R_a^{-1}}}_{\dis [T,  R_a]}  \ \underbrace{\boldsymbol{\ R_a T} \  t R_a ^{-1} \ t^{-1} T^{-1} }_{\dis [R_a,  Tt ]} \in H_1$,

$\pbul$ and finally, $T[t_1,t_2] T^{-1}=[Tt_1T^{-1},Tt_2T^{-1}]\in H_1$.

\medskip
 
$-$ We now prove that $[H,H]=H_1$. 

It is folklore (see Appendix \ref{geneDGFolk}) that the derived  subgroup of a group $\Gamma=\langle S \rangle$ is the normal closure, $\ll [a,b], \ a,b \in S \gg_{\Gamma}$, of the group generated by the commutators $[a,b]$ with $a,b \in S$. Thus, as $H=\langle \ R_a,\  t \ \ ; \ a\in A, \ t\in Q \ \rangle$, we have 
$$\boldsymbol{[H,H]=\ll [R_a,t] , [t_1,t_2]} \ ; \ a \in A , \  t,t_1,t_2 \in \QR  \boldsymbol{ \gg_H=H_1}$$ since $H_1$ is normal in $H$.
\end{enumerate}
\end{proof}

\section{Proofs of Theorem \ref{THGenF1}}

Let $A=\langle \alpha_1, \cdots, \alpha_s\rangle$,  $\QR=\langle  g_1, ...,g_m \rangle$ and $H=\HA$ as in Definition \ref{defH}.

\begin{proof}[\textbf{Proof of Item (1).} \quad ] 

$H$ is abelian $\Lra\limits^{(1)}$ $\forall i \in \{1,\dots, m\}$, $g_i\in \mathcal G_{\frac{1}{q}}$ is a rational rotation $\LRA$  $\forall i \in \{1,\dots, m\}$, $\exists p_i\in \{0, \dots ,q-1\}$ such that $g_i=R_{\frac{p_i}{q}}$  $\LRA$  $\forall i=1,\dots,m$, $\exists p_i\in \{0, \dots ,q-1\}$  such that $\mathfrak S(g_i)= \sigma^{p_i}$ $\Lra\limits^{(2)}$
$W=\langle  \mathfrak S(Q),\sigma \rangle$ is abelian.

$\boxed{\Ra\limits^{(1)}}$ follows from Novak's result (see \cite{No} Lemma 5.1 and its proof) which states that the centralizer in $\IET$ of an irrational rotation consists of rotations.

$\boxed{\La\limits^{(2)}}$ is a consequence of the fact that the centralizer of a $q$-cycle $\sigma$ in $\mathcal S_q$ is $\langle \sigma \rangle$. 
\end{proof}

\medskip 

\begin{proof}[\textbf{Proof of Item (2).} ]
Since $H/[H,H]$ is abelian and $[H,H] <\ker \ell$ is the direct union of its finitely generated subgroups that are finite by Lemma \ref{L2EM} (4), the group $H$ is obtained from groups that are either finite or abelians by taking one direct union and one  group extension, this exactly means that $H$ is elementary amenable and that it belongs to the Chou class $EG_2$ (see \cite[Section 2]{Chou}).

\medskip

In addition, as $s\not=0$, the group $H$ contains an irrational rotation, and as $\QR\setminus \mathbb S^1 \not=\emptyset$ it also contains an \iet \ that is not a rotation. Therefore, it is not virtually nilpotent by Criterium \ref{CritGLtg}. Moreover, it was proven in \cite[Theorems 3.2 and 3.2']{Chou} that an elementary amenable finitely generated group that is not virtually nilpotent contains a free semigroup on two generators and then it has exponential growth.\end{proof}

\section{Local permutations}\label{perlocJump}

\subsection{Definitions and basic properties}

\begin{defi} 
Let $f\in \ker \ell$, $x\in [0, \frac{1}{q})$ and $\beta \in (0, \frac{1}{q})$ such that $f$ is continuous on the intervals $J_p(x, \beta) = [x+\frac{p-1}{q}, x+\frac{p-1}{q}+\beta)$ for all $p=1, \cdots, q$.

As the translations of $f$ belong to $\{\frac{j}{q},\ j\in \Z\}$, the map $f$ permutes the $J_p(x, \beta)$ and then there exists $\omega\in \mathcal S_q$ such that $$f(J_p(x,\beta))= J_{\omega (p)}(x,\beta)= [x+\frac{\omega (p)-1}{q},\  x+\frac{\omega (p)-1}{q}+\beta)$$
It is easy to see that $\omega$ does not depend on the choice of a suitable $\beta$ and it is denoted by $\omega (f,x)$ and called \textbf{the local permutation of $f$ at $x$}.
\end{defi}

\medskip

A direct consequence of this definition is the following

\begin{proper} \label{formula f w} Let $f\in \ker \ell$, $x\in [0, \frac{1}{q})$ and $i\in \{1,\dots,q \}$ then 
$$\boxed{\boldsymbol{ f\bigl(x+\frac{i-1}{q}\bigr)=x+\frac{\omega(f,x) \bigl(i\bigr)-1}{q}}}$$
In other words, denoting by $x_i=x+\frac{i-1}{q}$, we have $\boldsymbol{\dis f(x_i) = x_{\omega (f,x)(i)}}$.
\end{proper}

\begin{prop} \label{WMorph} Let $x\in [0,\frac{1}{q})$.
The map $\omega_x : \left\{ \begin{array}{ll}
\ker \ell &\to \ \ \mathcal S_q \cr f &\mapsto \ \ \omega (f,x) \end{array} \right.$ is a morphism and its image contains $\mathfrak S(\QR)$. More specifically, if $f\in \QR$ then ${\omega (f,x)=\mathfrak S(f)}$.
\end{prop}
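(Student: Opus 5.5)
The plan is to work directly from Property \ref{formula f w}. Fix $x\in[0,\frac1q)$ and write $x_i=x+\frac{i-1}{q}$ for $i\in\{1,\dots,q\}$. By Lemma \ref{L2EM} (3), every $f\in\ker\ell$ maps the finite set $\{x_1,\dots,x_q\}$ into itself, and this map is injective because $f$ is. Property \ref{formula f w} says this map is $x_i\mapsto x_{\omega(f,x)(i)}$. So $\omega(f,x)$ is just the permutation of the indices induced by $f$ on the $\ker\ell$-invariant finite set $\{x_i\}$.

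For the morphism property, take $f,h\in\ker\ell$ and apply Property \ref{formula f w} twice:
$$hf(x_i)=h\bigl(x_{\omega(f,x)(i)}\bigr)=x_{\omega(h,x)(\omega(f,x)(i))}.$$
Since $hf\in\ker\ell$, Property \ref{formula f w} also gives $hf(x_i)=x_{\omega(hf,x)(i)}$. The points $x_i$ are pairwise distinct, so comparing the two expressions yields $\omega(hf,x)=\omega(h,x)\circ\omega(f,x)$. Hence $\omega_x$ is a morphism. This uses the composition convention matching that of $\mathfrak S$, namely that $\E_\tau$ composes as $\tau$ does.

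It remains to show that $\omega(f,x)=\mathfrak S(f)$ for $f\in\QR$, which gives $\mathfrak S(\QR)\subset \operatorname{im}\omega_x$.
\begin{enumerate}
\item First, $\QR\subset\ker\ell$: each $g_j$ lies in $\mathcal G_{\frac1q}$ and has rational translations, so $\mathfrak n(g_j)=0$ by the uniqueness in Lemma \ref{L1EM}, and $\ell$ is a morphism.
\item Let $f=\E_\tau\in\QR$. Then $f$ is a translation on each $I_p=[\frac{p-1}{q},\frac pq)$, mapping $I_p$ onto $I_{\tau(p)}$.
\item Since $x\in[0,\frac1q)$, we have $x_p\in I_p$. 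For small $\beta$, the interval $J_p(x,\beta)\subset I_p$ is sent by $f$ to $J_{\tau(p)}(x,\beta)$.
\item Therefore $\omega(f,x)=\tau=\mathfrak S(f)$.
\end{enumerate}

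There is no real obstacle here. The only point needing care is that $\omega(f,x)$ is well defined, that is, a suitable $\beta$ exists and the result does not depend on it. This holds because $\BP(f)$ is finite and each $x_p$ has a right neighbourhood on which $f$ is continuous. Stated in the definition, this is what justifies Property \ref{formula f w}.
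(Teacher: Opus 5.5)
Your proof is correct and is essentially the paper's argument. The paper follows the intervals $J_p(x,\beta)$ through $h\circ f$, with $\beta$ small enough for both maps, and checks $\omega(\E_\tau,x)=\tau$ directly; you follow their left endpoints $x_i$ via Property~\ref{formula f w}, and your explicit check that $\QR\subset\ker\ell$ supplies a step the paper leaves implicit.
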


\begin{proof} Let $x\in [0, \frac{1}{q})$, it is obvious that $\omega(\id,x)= \id$. Let $f,h \in \ker \ell$ and $\beta \in (0, \frac{1}{q})$ small enough so that $f$ and $h$ are continuous on the intervals $J_p(x, \beta)$. 
Hence, for any $p\in\{1, \dots, q\}$, we have $$h\circ f (J_p(x, \beta)) = h\bigl(J_{\omega(f,x)(p)}(x, \beta) \bigr) = J_{\omega(h,x)\omega(f,x)(p)}(x, \beta),$$ therefore $\omega (h\circ f,x)= \omega(h,x)\ \omega(f,x)$.

\medskip
It is easy to check that the $\iet$ $t\in \mathcal G_{\frac{1}{q}}$ associated to $\tau \in \mathcal S_q$ satisfies $\omega (t,x)= \tau$ and then we get $\omega_x(\ker \ell) \supseteq {\mathfrak S(\QR)}$.
\end{proof}

\begin{prop}\label{critere Id et abelien} \ 
\begin{enumerate}
\item If $f\in \ker \ell$ satisfies $\omega(f,x)=\id $ for all $x\in [0,\frac{1}{q})$, then $f=\id$.
\item Let $\Gamma < \ker \ell$,  $\Gamma $ is abelian if and only if \ $\forall x\in [0,\frac{1}{q} )$, $\omega_x(\Gamma )$ is abelian.
\end{enumerate}
\end{prop}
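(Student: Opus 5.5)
For Item (1) the plan is to use Property \ref{formula f w} directly. Fix $f\in \ker \ell$ with $\omega(f,x)=\id$ for every $x\in[0,\frac1q)$. Every $y\in[0,1)$ can be written uniquely as $y=x+\frac{i-1}{q}$ with $x\in[0,\frac1q)$ and $i\in\{1,\dots,q\}$. By Property \ref{formula f w}, $f(y)=x_{\omega(f,x)(i)}=x_i=y$, so $f=\id$. The only point to watch is that the definition of $\omega(f,x)$ needs some $\beta>0$ on which $f$ is continuous on all the $J_p(x,\beta)$. Such a $\beta$ exists for every $x$, since $f$ has finitely many break points and intervals are half-open on the right. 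So $\omega(f,x)$ is defined at every $x\in[0,\frac1q)$, and the formula applies pointwise.

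For Item (2), the direction ``$\Gamma$ abelian $\Rightarrow$ each $\omega_x(\Gamma)$ abelian'' is immediate. By Proposition \ref{WMorph}, $\omega_x$ is a morphism, and the image of an abelian group under a morphism is abelian.

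For the converse, take $f,h\in\Gamma$ and consider $c=[f,h]=fhf^{-1}h^{-1}$. Since $\ker\ell$ is a group (indeed $[H,H]\lhd\ker\ell$ by Lemma \ref{L2EM}), $c\in\ker\ell$. Because $\omega_x$ is a morphism, $\omega(c,x)=[\omega(f,x),\omega(h,x)]$, and this equals $\id$ because $\omega_x(\Gamma)$ is abelian. This holds for every $x\in[0,\frac1q)$, so Item (1) gives $c=\id$. Hence $f$ and $h$ commute, and $\Gamma$ is abelian.

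I do not expect a real obstacle. The only subtle step is the well-definedness of local permutations at every point $x$, including points where $f$ has break points. This follows from right-continuity of $\iet$s on half-open intervals, and the paper has already asserted it in the definition.
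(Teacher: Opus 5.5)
Your proposal is correct and follows the paper's proof essentially step for step. Item (1) comes from the formula $f(x_i)=x_{\omega(f,x)(i)}$ applied at every $x\in[0,\frac1q)$, and Item (2) comes from applying Item (1) to the commutator $[f,h]$, using that $\omega_x$ is a morphism. Your added remark, that a suitable $\beta$ exists at every $x$ by right-continuity and the finiteness of break points, spells out something the paper leaves implicit in its definition.
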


\begin{proof}  \ 
 
\begin{enumerate}
\item Let $f\in \ker \ell$ and $x\in [0, \frac{1}{q})$. By Property \ref{formula f w}, if $\omega(f,x)=\id$ then $f(x+ \frac{i-1}{q})= x+ \frac{i-1}{q}$, for any $i\in \{1,\dots,q \}$.

This implies that $f=\id$ provided that $\omega_x=\id$, $\forall x \in [0, \frac{1}{q})$.

\medskip

\item The direct implication is obvious. For the converse, we apply Item (1) to $[f,g]$ with $f,g \in \Gamma $ and any $x\in [0,\frac{1}{q})$. Indeed, we have that $\omega([f,g],x)=[\omega(f,x), \omega(g,x)])= \id $ since $\omega_x(\Gamma )$ is abelian,  and we conclude by Item (1) that $[f,g]= \id$.
\end{enumerate} \ \vskip -4mm \hfill\end{proof}

\begin{lemm} \label{perm loc conjugate commutator} \ 

Let  $x\in [0, \frac{1}{q})$, $f\in \QR$ and $\tau =\mathfrak S(f)= \omega\bigl(f,x\bigr)$. We recall that  $\sigma$ is the $q$-cycle $(1,2,...,q) \in \mathcal S_q$.

Let $\gamma \in (0,1)$ be an irrational number, $\tilde \gamma \in (0, \frac{1}{q})$ and $j_0\in \{0,\dots,q-1 \}$ be defined by $\gamma= \frac{j_0}{q} + \tilde \gamma$. Then

\begin{enumerate}
\item
\begin{enumerate}
\item If $x \in [0, \tilde \gamma)$ then $\dis \omega (R_\gamma f R_{-\gamma }, x) =\sigma^{j_0+1} \tau  \sigma^{-(j_0+1)}$.
\item If $x \in [\tilde \gamma, \frac{1}{q})$ then $\dis \omega (R_\gamma f R_{-\gamma }, x)=\sigma^{j_0} \tau  \sigma^{-j_0}$.
\end{enumerate}

\medskip

\item
\begin{enumerate}
\item If $x \in [0,\tilde \gamma)$ then $\omega ([R_\gamma,f],x)=[\sigma^{j_0+1},\tau] $.
\item If $x \in [\tilde \gamma, \frac{1}{q})$ then $\omega ([R_\gamma,f],x)=[\sigma^{j_0} , \tau]$.
\end{enumerate}
\end{enumerate}
\end{lemm}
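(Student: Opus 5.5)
The plan is a direct computation of where $R_\gamma f R_{-\gamma}$ sends the points $x_i=x+\frac{i-1}{q}$, using Property \ref{formula f w} to read off the local permutation. Item (2) then follows from Item (1) and the fact that $\omega_x$ is a morphism (Proposition \ref{WMorph}). Indeed, $[R_\gamma,f]=(R_\gamma f R_{-\gamma})f^{-1}$, so
$$\omega([R_\gamma,f],x)=\omega(R_\gamma fR_{-\gamma},x)\,\omega(f,x)^{-1}=\sigma^{k}\tau\sigma^{-k}\tau^{-1}=[\sigma^k,\tau],$$
with $k=j_0+1$ or $k=j_0$ according to the case. Here $\omega(f,x)=\tau$ is given by Proposition \ref{WMorph}, since $f\in \QR$.

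For Item (1), first note that $R_\gamma fR_{-\gamma}\in\ker\ell$, so its local permutation is defined. We follow a point $x_i$ (and a small interval $J_i(x,\beta)$ to its right) through the three maps.

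\emph{Step 1: apply $R_{-\gamma}$.} We have $x_i-\gamma = x-\tilde\gamma+\frac{i-1-j_0}{q}$ modulo $1$.
\begin{itemize}
\item[(a)] If $x<\tilde\gamma$, write $x-\tilde\gamma = y-\frac1q$ with $y=x-\tilde\gamma+\frac1q\in[0,\frac1q)$. Then $R_{-\gamma}(x_i)=y_{\,\sigma^{-(j_0+1)}(i)}$, with indices read cyclically mod $q$.
\item[(b)] If $x\ge\tilde\gamma$, put $y=x-\tilde\gamma\in[0,\frac1q)$. Then $R_{-\gamma}(x_i)=y_{\,\sigma^{-j_0}(i)}$.
\end{itemize}
Call the exponent $k$, so $k=j_0+1$ in case (a) and $k=j_0$ in case (b).

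\emph{Step 2: apply $f$.} Since $f\in\mathcal G_{\frac1q}$ acts on each $I_j$ by translation onto $I_{\tau(j)}$, it sends $y_j$ to $y_{\tau(j)}$. A small interval to the right of $y_j$ is sent to the corresponding small interval to the right of $y_{\tau(j)}$, because $y_j$ lies in the interior or at the left endpoint of $I_j$.

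\emph{Step 3: apply $R_\gamma$.} This undoes Step 1, sending $y_j$ to $x_{\sigma^{k}(j)}$.

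Composing the three steps gives
$$R_\gamma fR_{-\gamma}(x_i)=x_{\sigma^k\tau\sigma^{-k}(i)}.$$
The same holds for the intervals $J_i(x,\beta)$ when $\beta$ is small, which yields the formula for the local permutation.

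The only delicate point is the index bookkeeping in Step 1. One must check that the rotation $R_{-\gamma}$ on the circle $[0,1)$ shifts indices cyclically by $\sigma^{-k}$, with the sign convention that $\sigma(i)=i+1$ mod $q$. One must also check that the small intervals do not cross a discontinuity. This holds because $\gamma$ is irrational, so $y\ne 0$ in case (b) and $y\in(0,\frac1q)$ in case (a). The wrap-around at $1$ is exactly where the extra shift in case (a) comes from.
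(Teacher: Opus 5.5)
Your proposal is correct and follows the paper's proof. Like the paper, you track $x+\frac{i-1}{q}$ through $R_{-\gamma}$, $f$ and $R_\gamma$ in the two cases $x<\tilde\gamma$ and $x\ge\tilde\gamma$, and you get Item (2) from the morphism property of $\omega_x$ together with $\omega(f^{-1},x)=\tau^{-1}$.

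One slip: irrationality of $\gamma$ does not give $y\neq 0$ in case (b), since $x=\tilde\gamma$ yields $y=0$. This is harmless, because $y=0$ is never a problem. Then $y_j$ is just the left endpoint of $I_j$, and since all maps are right-continuous (half-open convention), $R_{-\gamma}$, $f$ and $R_\gamma$ remain translations on the small intervals $[x_i,x_i+\beta)$ and $[y_j,y_j+\beta)$.
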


\medskip

\begin{proof} In the following calculus, we argue in $\IET(\R/\Z)$ and we consider integers modulo $q$.

\begin{enumerate}
\item 
\begin{enumerate}

\item Let  $x \in [0, \tilde \gamma)$ and $i\in \{1, ..., q\}$. We have  $$\dis x+ \frac{i-1}{q}\in \bigl[\frac{i-1}{q}, \frac{i-1}{q}+ \tilde \gamma\bigr)$$

\hskip -1cm  then \ 
 $\dis R_{-\gamma} (x+ \frac{i-1}{q}) = x-\tilde \gamma + \frac{i-1-j_0}{q} \in  \bigl[\frac{i-1-j_0}{q} -\tilde \gamma, \frac{i-1-j_0}{q}\bigr) \subset I_{i-1-j_0}$,
 
\hskip -1cm therefore $\dis f R_{-\gamma} (x+ \frac{i-1}{q}) = x-\tilde \gamma + \frac{\tau(i-1-j_0)}{q} \in  f(I_{i-1-j_0})= I_{\tau(i-1-j_0)}$


\hskip -1cm  and finally \ $\dis R_{\gamma}   f R_{-\gamma} (x+ \frac{i-1}{q}) = x + \frac{\tau(i-1-j_0)}{q} + \frac{j_0}{q}$. 

\medskip

We conclude that \ $\dis \omega (R_\gamma f R_{-\gamma }, x)(i)-1= \tau(i-1-j_0)+j_0$, thus
$$\dis \omega (R_\gamma f R_{-\gamma }, x)(i)= \tau(i-(j_0+1))+(j_0+1)=\sigma^{j_0+1} \tau  \sigma^{-(j_0+1)}(i).$$

\bigskip

\item Let $x \in [\tilde \gamma, \frac{1}{q})$ and $i\in \{1, ..., q\}$. Similarly, we have  $$\dis x+ \frac{i-1}{q}\in \bigl[\frac{i-1}{q}+\tilde \gamma, \frac{i}{q} \bigr),$$

\hskip -1cm  then \quad 
 $\dis R_{-\gamma} (x+ \frac{i-1}{q}) = x-\tilde \gamma+ \frac{i-1-j_0}{q} \in  \bigl[\frac{i-1-j_0}{q}, \frac{i-j_0}{q}- \tilde \gamma\bigr) \subset \bigl[\frac{i-1-j_0}{q}, \frac{i-j_0}{q}\bigr)$,
 
\hskip -1cm therefore $\dis f R_{-\gamma} (x+ \frac{i-1}{q}) = x- \tilde \gamma + \frac{\tau(i-j_0)-1}{q} \in  \bigl[\frac{\tau(i-j_0)-1}{q}, \frac{\tau(i-j_0)}{q} -\tilde \gamma \bigr)$,

\hskip -1cm and finally  \ $\dis R_{\gamma} f R_{-\gamma} (x+ \frac{i-1}{q}) = x + \frac{\tau(i-j_0)-1}{q} + \frac{j_0}{q}$. 

\medskip

We conclude that $\dis \omega (R_\gamma f R_{-\gamma }, x)(i)-1= \tau(i-j_0)-1+j_0$, thus  $$\dis\omega (R_\gamma f R_{-\gamma }, x)(i)= \tau(i-j_0)+ j_0=\sigma^{j_0} \tau  \sigma^{-j_0}(i).$$
\end{enumerate}

\bigskip

\item  As $\omega_x$ is a morphism, it holds that 
$$ \omega_x([R_\gamma,f],x)=\omega_x(R_\gamma f R_\gamma^{-1}  f^{-1} ,x)= 
\omega_x(R_\gamma f R_\gamma^{-1} ,x) \underbrace{\omega_x(f^{-1} ,x)}_{\dis \tau ^{-1}}.$$
Hence, Item (1) implies the required conclusion.
\end{enumerate}
\end{proof}

\subsection{Images of $\ker \ell$ and $[H,H]$ by the local permutations.} \  

\begin{sublemm} Let $x\in [0,\frac{1}{q})$ and $\tau,\tau_1,\tau_2 \in \mathfrak S(\QR)$. 
Then for any $p\in \{0,\dots,q-1\}$, we have 
$$\sigma ^p \tau \sigma ^{-p} \in \omega_x(\ker \ell)  \ \text{ and } \ \ [\tau_1,\tau_2], \  [\sigma ^p, \tau ]\in \omega_x([H,H]).$$
\end{sublemm}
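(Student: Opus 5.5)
The plan is to realize each required permutation as the local permutation at $x$ of an explicit element of $\ker \ell$ (respectively of $[H,H]$). The main tool is Lemma \ref{perm loc conjugate commutator}, used with a suitable irrational $\gamma \in A$ chosen according to the position of $x$ inside $[0,\frac{1}{q})$.

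\textbf{Step 1: the commutators $[\tau_1,\tau_2]$.} Write $\tau_k=\mathfrak S(t_k)$ with $t_k\in \QR$. Then $[t_1,t_2]\in [H,H]$, and by Proposition \ref{WMorph} we get $\omega_x([t_1,t_2])=[\tau_1,\tau_2]$. This case is immediate.

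\textbf{Step 2: the conjugates $\sigma^p\tau\sigma^{-p}$ and the commutators $[\sigma^p,\tau]$.} Fix $t\in\QR$ with $\mathfrak S(t)=\tau$, and fix $p\in\{0,\dots,q-1\}$. We look for $a\in A$ such that, writing $a=\frac{j_0}{q}+\tilde a$ with $\tilde a\in(0,\frac{1}{q})$, one of the following holds:
\begin{itemize}
\item $j_0=p$ and $x\in[\tilde a,\frac1q)$, so that we are in case (b) of Lemma \ref{perm loc conjugate commutator};
\item $j_0+1\equiv p \pmod q$ and $x\in[0,\tilde a)$, so that we are in case (a).
\end{itemize}
In either case Lemma \ref{perm loc conjugate commutator} gives $\omega_x(R_a tR_a^{-1})=\sigma^p\tau\sigma^{-p}$ and $\omega_x([R_a,t])=[\sigma^p,\tau]$. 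Moreover $R_atR_a^{-1}\in\ker\ell$ by Lemma \ref{L3EM}(1), and $[R_a,t]\in[H,H]$. Note that the lemma is stated for irrational $\gamma\in(0,1)$; every nonzero $a\in A$ is irrational by the $\Q$-independence of the $\alpha_i$, and we read $a$ modulo $1$.

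\textbf{Step 3: choosing $a$ (the main point).} The set $A\subset\R/\Z$ is dense, because it contains the irrational rotation number $\alpha_1$. Its image under $a\mapsto qa$ is therefore also dense. Hence the set of $a\in A$ lying in the arc $\bigl(\frac{p}{q}+x,\frac{p+1}{q}\bigr)$ is nonempty whenever $x<\frac1q$. Any such $a$ has $j_0=p$ and $\tilde a\in(x,\frac1q)$, so $x<\tilde a$. This puts us in case (a), which yields exponent $p+1$ rather than $p$.

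To obtain exponent $p$, apply the same argument with $p-1$ in place of $p$, working modulo $q$ as in the proof of the lemma. If $p\ge1$, pick $a\in A\cap\bigl(\frac{p-1}{q}+x,\frac{p}{q}\bigr)$; if $p=0$, pick $a$ in the arc $\bigl(\frac{q-1}{q}+x,1\bigr)$. In each case $x<\tilde a$ and $j_0+1\equiv p$. Since $\sigma^q=\id$, the exponent $j_0+1$ only matters modulo $q$.

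Alternatively, when $x>0$ one can choose $a\in A\cap\bigl(\frac{p}{q},\frac pq+x\bigr)$ and use case (b). The case-(a) choice has the advantage of working for all $x\in[0,\frac1q)$, including $x=0$, so it is the one we adopt.

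The only delicate issue is this bookkeeping on the interval: getting the case split of Lemma \ref{perm loc conjugate commutator} and the exponent modulo $q$ right so that every $p$ and every $x$ is covered. The density of $A$ handles the rest.
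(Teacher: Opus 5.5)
Your proof is correct and follows the paper's route. You use Proposition~\ref{WMorph} for $[\tau_1,\tau_2]$, and density of $A$ to put a rotation angle in a suitable arc, so that Lemma~\ref{perm loc conjugate commutator} gives $\omega_x(R_a t R_a^{-1})=\sigma^p\tau\sigma^{-p}$ and $\omega_x([R_a,t])=[\sigma^p,\tau]$. The paper instead takes $\gamma\in[\frac{p_0}{q},\frac{p_0}{q}+x]$ and uses case (b), which degenerates at $x=0$. Your case-(a) choice with $j_0\equiv p-1 \pmod q$ covers every $x\in[0,\frac1q)$, so your version is slightly more complete.
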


\begin{quote}
Indeed, Let $p_0\in \{0,...,q-1\}$, $f\in \QR$ such that $\tau=\mathfrak S(f)$ and $a\in A\setminus\{0\}$.

By the density in $[0,1)$ of the set $\bigl\{  \{na\}, \ n \in \N \bigr\}$, where  $\{\ . \  \}$ denotes the fractional part, there exists some integer $m$ such that $ \{ma \} \in [\frac{p_0}{q}, \frac{p_0}{q}+x]$. 

Setting $\gamma = \{ma \} \in (\frac{p_0}{q} ,\frac{p_0+1}{q} )$ and $\tilde\gamma =\gamma -\frac{p_0}{q}$. We get that $x>\tilde\gamma$ and then we are in the situation of Lemma \ref{perm loc conjugate commutator} Item 1 (b). Therefore, 
$\omega_x(R_\gamma f R_{-\gamma})=\sigma^{p_0} \tau \sigma^{-p_0} \in \omega_x(\ker \ell)$.
In addition: 

If $\tau_1,\tau_2 \in \mathfrak S(\QR)$, we have  $[\tau_1,\tau_2]=
\omega_x([E_{\tau_1},E_{\tau_2}]) \in  \omega_x([H,H])$.

Moreover, $\omega_x([R_\gamma,f])\in \omega_x([H,H])$ and  by Lemma \ref{perm loc conjugate commutator} Item 2 (b), we get $\omega_x([R_\gamma,f])= [ \sigma^{p_0},\tau ]$.
\end{quote}

\medskip
This sublemma combined with Lemma \ref{L3EM} leads to  
\begin{lemm}\label{gene wx(kerl)} \ 
\begin{itemize}
\item  The finite group $\omega_x(\ker \ell) =\bigl\langle \ \sigma^p \tau \sigma ^{-p} \ ; \ p\in\{0, \dots , q-1\}, \ \tau \in \mathfrak S(\QR)\  \bigr\rangle$.
 
\item The finite group $\omega_x([H,H]) =\bigl\langle \ [\tau_1,\tau_2] \ , \ [\sigma ^p, \tau] \ ; \ p\in\{0, \dots , q-1\}, \ \tau, \tau_1, \tau_2 \in \mathfrak S(\QR) \ \bigr\rangle$.

\item In particular, $\omega_x(\ker \ell)$ and $\omega_x([H,H])$ do not depend on $x$. 
\end{itemize}
\end{lemm}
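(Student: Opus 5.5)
The plan is to prove each equality by a double inclusion: the sublemma gives ``$\supseteq$'', and Lemma \ref{L3EM} combined with the computations of Lemma \ref{perm loc conjugate commutator} gives ``$\subseteq$''. Fix $x\in[0,\frac1q)$ and write $K_1=\langle \sigma^p\tau\sigma^{-p}\rangle$ and $K_2=\langle [\tau_1,\tau_2],[\sigma^p,\tau]\rangle$ for the right-hand sides, with $p\in\{0,\dots,q-1\}$ and $\tau,\tau_1,\tau_2\in\mathfrak S(\QR)$. By Proposition \ref{WMorph}, $\omega_x$ is a morphism on $\ker\ell$, and $[H,H]<\ker\ell$ by Lemma \ref{L2EM}(4). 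Hence both images are subgroups of $\mathcal S_q$, so they are finite. The sublemma puts every generator of $K_1$ in $\omega_x(\ker\ell)$ and every generator of $K_2$ in $\omega_x([H,H])$, which gives $K_1\subseteq\omega_x(\ker\ell)$ and $K_2\subseteq\omega_x([H,H])$.

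For $\omega_x(\ker\ell)\subseteq K_1$, I use Lemma \ref{L3EM}(1): $\ker\ell$ is generated by the elements $R_agR_a^{-1}$ with $a\in A$ and $g\in\QR$. Since $\omega_x$ is a morphism, it suffices to compute $\omega_x$ on these generators. If $a=0$, then $\omega_x(g)=\mathfrak S(g)\in K_1$ by Proposition \ref{WMorph}. If $a\neq0$, then $a$ is irrational modulo $1$ because the $\alpha_i$ are $\Q$-independent and non-rational. So I can take $\gamma$ to be the representative of $a$ in $(0,1)$ and apply Lemma \ref{perm loc conjugate commutator}(1). Whichever of the two cases $x<\tilde\gamma$ or $x\ge\tilde\gamma$ occurs, the value is $\sigma^{j}\mathfrak S(g)\sigma^{-j}$ with $j\in\{j_0,j_0+1\}$. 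Reducing $j$ modulo $q$ (note $\sigma^q=\id$) places this value in $K_1$.

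For $\omega_x([H,H])\subseteq K_2$, I first check that $K_2$ is normal in $K_1$; this is the step I expect to need care. Lemma \ref{L3EM}(2) lists the generators of $[H,H]$ as $[R_a,t]$ and $[t_1,t_2]$. Their images are $[\tau_1,\tau_2]$ and, by Lemma \ref{perm loc conjugate commutator}(2) again with $\sigma^q=\id$, $[\sigma^j,\mathfrak S(t)]$. Both lie in $K_2$, so the images of the generators of $[H,H]$ lie in $K_2$, and this already yields the inclusion. The normality of $K_2$ is therefore not needed for this direction; it only enters when $[H,H]$ is regarded as generated by conjugates.

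Finally, the independence from $x$ is immediate, since the descriptions of $K_1$ and $K_2$ do not involve $x$. Finiteness holds because both groups are subgroups of $\mathcal S_q$.
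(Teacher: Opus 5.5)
Your proposal is correct and is essentially the paper's proof. The sublemma gives $K_1\subseteq\omega_x(\ker\ell)$ and $K_2\subseteq\omega_x([H,H])$, while the generating sets of Lemma \ref{L3EM} pushed through Lemma \ref{perm loc conjugate commutator} give the reverse inclusions; your handling of $a=0$, of the irrationality of nonzero $a\in A$, and of reducing $j_0+1$ modulo $q$ only makes explicit what the paper leaves implicit, and, as you conclude yourself, normality of $K_2$ is not needed because Lemma \ref{L3EM}(2) already gives generators of $[H,H]$ rather than normal generators.
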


\begin{proof} 

By Item  (1) of Lemma \ref{L3EM}, we have $\dis \omega_x(\ker \ell) =\langle \omega_x (R_{a} g R_{-a}) \ ; \ a\in A, \  g\in \QR \ \rangle$ and 

\noindent $\dis \omega_x([H,H])=\langle \omega_x([R_a,t]) \ , \ \omega_x([t_1,t_2]) \ \ ; \ \   a \in A, \   t,t_1,t_2\in \QR \ \rangle$. 
Then, by Lemma \ref{perm loc conjugate commutator}, we get  $$\omega_x(\ker \ell) \ \subset \ \langle \ \sigma ^p \tau \sigma ^{-p} \ ; \ \ p\in\{0, ..., q-1\}, \ \tau \in \mathfrak S (\QR)\ \rangle \ \ \text{ and  }$$
$$\omega_x([H,H])\subset \langle \ [\sigma ^p, \tau]\ , \ [\tau_1,\tau_2] \ ; \ p\in\{0,\dots, q-1\}, \ \tau, \tau_1, \tau_2 \in \mathfrak S(\QR) \ \rangle.$$

\medskip

The reverse inclusion is provided by the previous sublemma. \end{proof}

\section{Proof of Theorem \ref{THF0}}

\subsection{A criterium of non virtual solvability}
\begin{prop}\label{KEY}
If $\mathcal A_q< \omega_x(\ker \ell)$ for all $x\in [0, \frac{1}{q})$, then $\ker \ell$ is not virtually solvable. 
\end{prop}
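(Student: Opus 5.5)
We argue by contradiction. Suppose $\ker \ell$ contains a solvable subgroup $K$ of finite index, say solvable of derived length $d$. Replacing $K$ by its normal core, we may assume $K \lhd \ker \ell$ with finite index $N$. The aim is to push $K$ through the local permutations and obtain a solvable subgroup of bounded index in $\mathcal A_q$ for infinitely many "independent" points. Since $\mathcal A_q$ is simple and non abelian for $q\geq 5$, it is not solvable. For a single $x$, however, $\omega_x(K)$ is a normal subgroup of $\omega_x(\ker \ell)$ of index at most $N$. It could simply contain $\mathcal A_q$, and nothing forbids this at one point. The plan is therefore to exploit infinitely many points simultaneously, producing an infinite product of copies of $\mathcal A_q$ inside a quotient of $\ker\ell$.

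The key construction uses the map $\Omega_{x_1,\dots,x_k}=(\omega_{x_1},\dots,\omega_{x_k}) : \ker \ell \to \mathcal S_q^k$ for distinct points $x_1,\dots,x_k \in [0,\frac1q)$. We show that, for suitable points, its image contains $\mathcal A_q^k$. Fix $\tau \in \mathfrak S(\QR)$ and use the elements $R_\gamma f R_{-\gamma}$ of Lemma \ref{perm loc conjugate commutator}. Their local permutation at $x$ is $\sigma^{j_0+1}\tau\sigma^{-(j_0+1)}$ or $\sigma^{j_0}\tau\sigma^{-j_0}$, according to whether $x<\tilde\gamma$ or $x\geq\tilde\gamma$. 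Choose points $x_1<\dots<x_k$, and use density of $\{\{ma\}\}$ to choose rotations with $\tilde\gamma$ in $(x_{i-1},x_i)$. This produces elements whose local permutations are "shifted by one conjugation by $\sigma$" exactly at the points $x_i,\dots,x_k$.

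Taking commutators of such elements, we obtain elements whose local permutation is trivial at some points and nontrivial at others. The relevant commutators are those between elements with equal local permutations at $x_1,\dots,x_{i-1}$ and different ones afterwards. We would use the standard fact that a subgroup of $\mathcal A_q^k$ projecting onto each factor, with $\mathcal A_q$ non abelian simple, is a product of diagonals. Separation of points via these commutators then rules out diagonals, giving $\Omega(\ker\ell)\supseteq \mathcal A_q^k$. This step is the main obstacle. One must carefully verify that the relevant commutators $[\sigma^{j}\tau\sigma^{-j},\sigma^{j+1}\tau'\sigma^{-(j+1)}]$, together with the assumption $\mathcal A_q<\omega_x(\ker\ell)$, prevent the graph of an automorphism between two coordinates. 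One way is to use that $x\mapsto\omega_x(g)$ is piecewise constant with jumps only at finitely many points, so an element supported near one $x_i$ can be built.

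Once $\mathcal A_q^k \subseteq \Omega(\ker\ell)$ for every $k$, the contradiction follows. The image $\Omega(K)$ is a solvable subgroup of index at most $N$ in $\Omega(\ker\ell)$. Hence $\Omega(K)\cap\mathcal A_q^k$ has index at most $N$ in $\mathcal A_q^k$. A subgroup of index at most $N$ in $\mathcal A_q^k$ contains $\mathcal A_q^{k'}$ for some coordinate subproduct once $k$ is large relative to $N$. Indeed, its normal core has index at most $N!$, and the normal subgroups of $\mathcal A_q^k$ are subproducts, so the core contains a full factor $\mathcal A_q$ as soon as $|\mathcal A_q|^k>N!$. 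This contradicts solvability of $K$, so $\ker\ell$ is not virtually solvable.
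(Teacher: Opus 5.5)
There is a genuine gap, and it sits exactly where you put your ``main obstacle'': the inclusion $\Omega_{x_1,\dots,x_k}(\ker\ell)\supseteq\mathcal A_q^k$ is asserted, not proved. The subdirect-product fact you quote concerns subgroups of $\mathcal A_q^k$. But $\Omega(\ker\ell)$ lies in $V^k$ with $V=\omega_x(\ker\ell)$, and $V$ may be $\mathcal S_q$. The exclusion of diagonals is left as something one ``must carefully verify''.

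The step can be repaired. Take two coordinates $x_a<x_b$, and apply Lemma \ref{perm loc conjugate commutator} with $\tilde\gamma\in(x_a,x_b]$, resp. $\tilde\gamma\in(x_b,\frac1q)$, and arbitrary $j_0$. This shows the projection $P_{ab}$ contains the graph $\{(\sigma\rho\sigma^{-1},\rho)\}$ of conjugation by $\sigma$ on $V$, and also the diagonal of $V$. Hence $P_{ab}$ contains all $([\sigma,\rho],1)$ and $(1,[\rho,\sigma])$. Since the centralizer of $\mathcal A_q$ in $\mathcal S_q$ is trivial, Goursat's lemma gives $P_{ab}\supseteq\mathcal A_q\times\mathcal A_q$. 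The subdirect-product lemma then applies to $[\Omega(\ker\ell),\Omega(\ker\ell)]\subseteq\mathcal A_q^k$. As written, however, this is missing.

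More importantly, none of it is needed: the premise that a single point cannot suffice is false. You write that $\omega_x(K)$ ``could simply contain $\mathcal A_q$''. It cannot, because $\omega_x$ is a morphism on $\ker\ell$, so $\omega_x(K)$ is solvable. Since $K\lhd\ker\ell$, $\omega_x(K)$ is a solvable normal subgroup of $\omega_x(\ker\ell)\in\{\mathcal A_q,\mathcal S_q\}$, hence trivial for $q\geq 5$.

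This holds for every $x\in[0,\frac1q)$, so Proposition \ref{critere Id et abelien}(1) forces $K=\{\id\}$. That contradicts the finiteness of the index of $K$ in the infinite group $\ker\ell$. Indeed $\ker\ell$ is infinite: the hypothesis forces some $g\in\QR$ not to be a rotation, and then the $R_agR_{-a}$, $a\in A$, are pairwise distinct.

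This is essentially the paper's proof. There it is phrased as: pick $x$ with $\omega_x(K)\neq\{\id\}$, show via an involution argument that $\omega_x(K)\cap\mathcal A_q\neq\{\id\}$, and conclude $\omega_x(K)\supseteq\mathcal A_q$. Your own final step already uses exactly these ingredients (normality, simplicity of $\mathcal A_q$, solvability of the image) inside $\mathcal A_q^k$. Applied coordinate by coordinate, they finish the proof immediately, so the products $\mathcal A_q^k$ are never needed.
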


\begin{proof}
By contradiction, suppose that there exists $K<\ker \ell$ of finite index and solvable. Since a subgroup of finite index always contains a normal subgroup  also of finite index, we may suppose that $K$ is normal in $\ker \ell$. 

\medskip

As $K$ has finite index and $\ker \ell$ is infinite, it holds that $K\not=\{\id\}$. 
Then there exists $x\in [0, \frac{1}{q})$ such that $\omega_x(K)$ is not trivial, according to Proposition \ref{critere Id et abelien} (1).

\medskip

We state that the group $\omega_x(K) \cap \mathcal A_q$ is not trivial. 

\begin{quote}

\ \ \ Since otherwise, by the property that squares of elements of $\mathcal S_q$ belong to $\mathcal A_q$, the group $\omega_x(K)$ would consist in involutions. In particular, $\omega_x(K)$ being not trivial, there exists a non trivial involution $i \in \omega_x(K)$.

\smallskip

As $\omega_x(K)$ is normal in $\omega_x(\ker \ell)$ and $\omega_x(\ker \ell)$ contains $\mathcal A_q$,  the group \break \hfill $\dis \Gamma:= \boldsymbol{\ll itit^{-1}, \ t \in \mathcal A_q \gg_{\mathcal A_q }}$ is contained in $\omega_x(K)$. 

In addition, since  $itit^{-1}\in  \mathcal A_q $, the group $\Gamma$ is a normal subgroup of the simple group $\mathcal A_q $, thus it either coincides with $\mathcal A_q$ or it is trivial. 

\begin{itemize}

\item  If $\Gamma=\mathcal A_q$ then $\mathcal A_q < \omega_x(K)$ which contradicts 
the fact that $\omega_x(K)$ consists of involutions. 

\smallskip

\item If $\Gamma$ is trivial then  $i$ commutes with any $t \in \mathcal A_q $, which is impossible.
\begin{quote}
{\small Indeed, 
$i$ can be written as a product of permutations with disjoints support: $i=(a_1,a_2)(a_3,a_4)...(a_m, a_{m+1})$. And, 
it is easy to check that the $3$-cycle $(a_1,a_2,a_3)\in \mathcal A_q$ ($q\geq 5$) does not commute with $i$.}
\end{quote}
\end{itemize}
\end{quote}

\smallskip

In conclusion, $\omega_x(K) \cap \mathcal A_q$ is a non trivial normal subgroup of $ \mathcal A_q$ which is simple, then $\omega_x(K) \cap \mathcal A_q = \mathcal A_q$. 
Finally, $\omega_x(K) \supset\mathcal A_q$ is not solvable, a contradiction since $\omega_x(K)$ is solvable as image of a solvable group.
\end{proof}

\subsection{Proof of Item (1) of Theorem \ref{THF0}}

In this subsection, we first develop similar arguments to those used for getting a suitable generating set for $[H,H]$, in order to provide such sets for the group $[W,W]$, where $W:=\langle \sigma, \mathfrak S(\QR) \rangle$. The group $W$ plays, in some sense, the role of $\omega_x(H)$. But this can not be directly obtained from the generating set of $[H,H]$, according to the following

\begin{clai} Let $x\in [0, \frac{1}{q})$, the morphism $\omega_x : \ker \ell \to \mathcal S_q$ can not be extended to a morphism from $H$ to $\mathcal S_q$, except in the situation where $H$ is abelian.
\end{clai}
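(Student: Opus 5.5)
We argue by contradiction. Suppose that $H$ is not abelian and that $\Omega : H\to \mathcal S_q$ is a morphism with $\Omega_{\vert \ker \ell}=\omega_x$. The plan is to use the explicit conjugation formulas of Lemma \ref{perm loc conjugate commutator} and to pick a rotation $R_\gamma$, $\gamma\in A$, for which these formulas are incompatible with $\Omega$ being a morphism. The incompatibility will come from the fact that $\omega_x(R_\gamma f R_\gamma^{-1})$ jumps between $\sigma^{j_0}\tau\sigma^{-j_0}$ and $\sigma^{j_0+1}\tau\sigma^{-(j_0+1)}$ depending on the position of $x$ relative to $\tilde\gamma$. A true conjugation by a fixed element $\Omega(R_\gamma)$ should not see this.

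\textbf{Step 1.} Since $H$ is not abelian, Item (1) of Theorem \ref{THGenF1} shows that there is $f\in \QR$ with $\tau=\mathfrak S(f)$ not commuting with $\sigma$. For $\gamma\in A$ set $c_\gamma=\Omega(R_\gamma)$. Then for every $f\in\QR$ we get $\omega_x(R_\gamma f R_\gamma^{-1})=c_\gamma \tau c_\gamma^{-1}$, using Proposition \ref{WMorph} for $\omega_x(f)=\tau$.

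\textbf{Step 2.} We compare $\gamma$ with $2\gamma$ (or, more generally, with $n\gamma$). Since $\Omega$ is a morphism, $c_{2\gamma}=c_\gamma^2$ up to the centralizer $C$ of $\mathfrak S(\QR)$. More precisely, for each $n$ the element $c_{n\gamma}c_\gamma^{-n}$ centralizes $\mathfrak S(\QR)$. On the other hand, Lemma \ref{perm loc conjugate commutator} gives $c_{n\gamma}\tau c_{n\gamma}^{-1}=\sigma^{k_n}\tau\sigma^{-k_n}$, where $k_n=\lfloor q\{n\gamma\}-qx\rfloor+1$ modulo $q$. Put differently, $k_n$ equals $j_0(n)$ or $j_0(n)+1$, according to whether $x\ge\tilde{(n\gamma)}$ or $x<\tilde{(n\gamma)}$. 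Hence $\sigma^{-k_n}c_{n\gamma}$ lies in the centralizer $Z$ of $\mathfrak S(\QR)$ for all $n$.

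\textbf{Step 3.} We exploit additivity. We have $\Omega(R_{(n+m)\gamma})=\Omega(R_{n\gamma})\Omega(R_{m\gamma})$. Write $c_{n\gamma}=\sigma^{k_n}z_n$ with $z_n\in Z$. Conjugating $\tau$ by both sides gives $\sigma^{k_{n+m}}\tau\sigma^{-k_{n+m}}=\sigma^{k_n}z_n\sigma^{k_m}\tau\sigma^{-k_m}z_n^{-1}\sigma^{-k_n}$. The choice of $x$ and of $\gamma$, using density of $\{n\gamma\}$, lets us arrange the following. The point $\{n\gamma\}$ lies just above a multiple of $1/q$ by less than $x$, and similarly for $\{m\gamma\}$. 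Then $k_n=j_0(n)$ and $k_m=j_0(m)$, while $\{(n+m)\gamma\}$ has fractional excess larger than $x$ but smaller than $1/q$, so $k_{n+m}=j_0(n)+j_0(m)+1$. In the simplest case $n=m=1$, this uses $\tilde\gamma<x<2\tilde\gamma<1/q$. For $\gamma'$ a multiple whose image $\sigma^{j}z$ must also relate, we reduce to an identity of the form $\sigma\,\tau\,\sigma^{-1}=z\tau z^{-1}$ with $z$ a product of elements whose conjugation action on $\mathfrak S(\QR)$ is computed only through $\sigma$-conjugations. Concretely, we take $z$ in the subgroup $\langle \sigma^{p}Z\sigma^{-p}\rangle$ acting on $\sigma^{k}\tau\sigma^{-k}$. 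Here we will need to use that $Z$ commutes with all of $\mathfrak S(\QR)$ but not necessarily with its $\sigma$-conjugates, which is the delicate point.

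\textbf{Main obstacle.} The hardest step is controlling $z_n$, since $Z$ might be large. To get around this, I would first try to avoid $Z$ altogether. The idea is to apply $\Omega$ to the genuine element $R_\gamma^{q}$, or better to the relation coming from $[R_\gamma,R_{\gamma'}]=\id$. Alternatively, we can use an element $g=R_\gamma f R_\gamma^{-1}$ evaluated with two different decompositions $R_{\gamma}=R_{\gamma_1}R_{\gamma_2}$ in which the carries differ. In that way the images involve only $\omega_x$ of elements of $\ker \ell$ and are computed by Lemma \ref{perm loc conjugate commutator}. This should yield $\sigma\tau\sigma^{-1}=\tau$, contradicting the choice of $\tau$ in Step 1.
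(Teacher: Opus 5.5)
\textbf{There is a genuine gap: the argument is never completed.} Steps 1 and 2 and the set-up of Step 3 are fine. In particular you correctly single out the configuration $\tilde\gamma<x<2\tilde\gamma<\frac1q$.

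You then call the control of $z_n\in Z$ the ``delicate point''. You end with three alternatives that only ``should yield'' $\sigma\tau\sigma^{-1}=\tau$:
\begin{enumerate}
\item the relation $[R_\gamma,R_{\gamma'}]=\id$;
\item the element $R_\gamma^q$;
\item two decompositions $R_\gamma=R_{\gamma_1}R_{\gamma_2}$ with different carries.
\end{enumerate}
None of these is carried out. The first two give no contradiction by themselves, since they only impose commutation or power relations on the $c_\gamma$'s. The third is not self-justifying either, because $\Omega(R_{\gamma_i})$ is not computed by $\omega_x$. Incidentally, $c_{2\gamma}=c_\gamma^2$ holds exactly, not ``up to the centralizer'', since $R_{2\gamma}=R_\gamma^2$.

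The missing idea is simply to take $j_0=0$. By density of $A$ modulo $1$, choose $\gamma\in A$ with $0<\{\gamma\}<x<2\{\gamma\}<\frac1q$.
\begin{enumerate}
\item Lemma \ref{perm loc conjugate commutator} (1b) gives $k_1=0$. So $c_\gamma=\Omega(R_\gamma)$ itself centralizes $\mathfrak S(Q)$, and hence so does $c_\gamma^2=\Omega(R_{2\gamma})$.
\item Lemma \ref{perm loc conjugate commutator} (1a) gives $\Omega(R_{2\gamma})\,\tau\,\Omega(R_{2\gamma})^{-1}=\sigma\tau\sigma^{-1}$.
\item Hence $\sigma\tau\sigma^{-1}=\tau$ for every $\tau\in\mathfrak S(Q)$, contradicting Step 1.
\end{enumerate}
Equivalently, your own Step 3 identity with $n=m=1$, $k_1=0$, $k_2=1$ reads $\sigma\tau\sigma^{-1}=z_1\tau z_1^{-1}=\tau$. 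The size of $Z$ never matters. This is exactly the paper's argument.

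Note that this configuration, both in your write-up and in the paper, requires $x>0$. For $x=0$ only case (1a) of the lemma occurs, so use a different choice: take $\{\gamma\}\in(0,\frac1{2q})$, which gives $k_1=k_2=1$. Then $c_\gamma=\sigma z$ with $z\in Z$, and $c_\gamma^2=\sigma z\sigma z\in\sigma Z$ forces $z\sigma z\in Z$, i.e.\ $\sigma\in Z$. The conclusion is the same.
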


Indeed, let $x\in [0, \frac{1}{q})$ and $\gamma \in (0, \frac{1}{2q})$ such that $\gamma<x<2\gamma$. Let $r\in \QR$, according to these choices, Item (1) of Lemma \ref{perm loc conjugate commutator} applies twice to $x$ and $f=r$. More precisely, 
\begin{quote}

$\pbul$ its subitem  $(b)$ with  $x>\tilde \gamma=\gamma$  ($j_0=0$) implies that $\omega_x( R_\gamma \ r R_{-\gamma}) = \mathfrak S(r)$, and 

\noindent  $\pbul$ its subitem $(a)$ with $x<\widetilde{2\gamma}=2\gamma$  ($j_0=0$) leads to $\omega_x( R_{2\gamma} \ r R_{-2\gamma} ) = \sigma \mathfrak S(r)\sigma^{-1}$.
\end{quote}
\smallskip

Suppose that $\omega_x$ is a morphism defined on $H$, then:

$-$ On one side, $\mathfrak S(r)=\omega_x( R_\gamma \ r R_{-\gamma}) = \omega_x( R_\gamma) \ \omega_x(r) \omega_x(R_{-\gamma})=\omega_x( R_\gamma) \ \mathfrak S(r) \omega_x(R_{\gamma})^{-1}$, this  means that $\omega_x( R_\gamma) $ commutes with $\mathfrak S(r)$. 

$-$ On the other side, $\sigma \mathfrak S(r)\sigma^{-1}=\omega_x( R_{2\gamma} \ r R_{-2\gamma} ) =  \bigl(\omega_x(R_{\gamma})\bigr)^2 \ \mathfrak S(r) \  \bigl
(\omega_x( R_{\gamma})\bigr) ^{-2}$.

\smallskip

Combining these two facts, we get that $\dis \sigma \mathfrak S(r)\sigma^{-1}=\mathfrak S(r)$.

Therefore, for any $r\in \QR$, it holds that $ \mathfrak S(r)$ commutes with the $q$-cycle $\sigma$ so $ \mathfrak S(r)= \sigma^p$ for some $p$ and then $\QR\subset \mathbb S^1$. Finally, Theorem \ref{THGenF1} (1) implies that $H$ is abelian.

\begin{lemm} \label{gene [W,W]} 

Let $W=\langle \sigma, \mathfrak S(\QR) \rangle$ \ and \ $W_0=\Bigl\langle \ [\sigma^p, t] \ , \ [t_1, t_2] \quad ; \quad \left\{\begin{array}{ll}
p \in \{0, \cdots , q-1\} \cr t,t_1,t_2 \ \in \ \mathfrak S(\QR)
\end{array} \right\} \Bigr\rangle$, 
$$\text{ then } \ \ [W,W] =W_0$$
\end{lemm}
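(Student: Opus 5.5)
We mimic the proof of Item (2) of Lemma \ref{L3EM}, with $\sigma$ playing the role of the rotations $R_a$ and $\mathfrak S(\QR)$ playing the role of $\QR$. The inclusion $W_0\subset[W,W]$ is immediate, since every generator of $W_0$ is a commutator of two elements of $W$. For the reverse inclusion we use the folklore fact recalled in Appendix \ref{geneDGFolk}: since $W=\langle S\rangle$ with $S=\{\sigma^p, t \ ; \ p\in\{0,\dots,q-1\}, t\in\mathfrak S(\QR)\}$, the derived subgroup $[W,W]$ is the normal closure in $W$ of the commutators of pairs of elements of $S$. These commutators are of the form $[\sigma^p,\sigma^{p'}]=\id$, $[\sigma^p,t]$, $[t,\sigma^p]=[\sigma^p,t]^{-1}$ or $[t_1,t_2]$, so they all lie in $W_0$. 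It therefore suffices to prove that $W_0$ is normal in $W$, and for this it is enough to check stability under conjugation by $\sigma$ and by every $T\in\mathfrak S(\QR)$, these elements generating $W$.

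The conjugations are handled with Properties \ref{CommPtGene}, exactly as in Lemma \ref{L3EM}, and indices $p$ are always read modulo $q$, which is legitimate because $\sigma^q=\id$.
\begin{itemize}
\item Conjugating $[\sigma^p,t]$ by $\sigma$ gives $\sigma[\sigma^p,t]\sigma^{-1}=[\sigma^{p+1},t]\,[t,\sigma]$, which lies in $W_0$.
\item Conjugating $[t_1,t_2]$ by $\sigma$ gives $\sigma[t_1,t_2]\sigma^{-1}=\bigl[\sigma,[t_1,t_2]\bigr][t_1,t_2]$, with $[t_1,t_2]\in\mathfrak S(\QR)$. The first factor is of the form $[\sigma,t]^{\pm1}$ for $t=[t_1,t_2]$, so it lies in $W_0$.
\item Conjugating $[\sigma^p,t]$ by $T$ gives $T[\sigma^p,t]T^{-1}=[T,\sigma^p]\,[\sigma^p,Tt]$, which lies in $W_0$ since $Tt\in\mathfrak S(\QR)$.
\item Conjugating $[t_1,t_2]$ by $T$ gives $[Tt_1T^{-1},Tt_2T^{-1}]$, with both entries in $\mathfrak S(\QR)$.
\end{itemize}

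The only point requiring any care is the second conjugation. The element $\sigma[t_1,t_2]\sigma^{-1}$ is no longer a commutator of elements of $\mathfrak S(\QR)$, so we must rewrite it using the identity $hgh^{-1}=[h,g]\,g$ together with the fact that $[t_1,t_2]$ itself belongs to $\mathfrak S(\QR)$; this is what makes the set of generators of $W_0$ of the form $[\sigma^p,t]$ large enough. Once each of the four identities above is checked directly from the definition $[a,b]=aba^{-1}b^{-1}$, normality follows, and hence $[W,W]=W_0$.
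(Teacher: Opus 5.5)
Your proposal is correct and follows essentially the same route as the paper: you check that $W_0$ is stable under conjugation by $\sigma$ and by each $T\in\mathfrak S(\QR)$, using the same four commutator identities. You then conclude from the folklore description of $[W,W]$ as the normal closure of the commutators of generators.
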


\begin{proof} \ 

$-$ We first check that $W_0$ is normal in $W$, that is $W_0$ is stable under conjugacy by $\sigma$ and by any $T \in \mathfrak S(\QR)$. Indeed, as in the proof of Lemma \ref{L3EM}, we have

\begin{itemize}
\item $\dis \sigma  [\sigma ^p ,t ] \sigma  ^{-1}  = \ 
\sigma  \sigma ^p t \sigma ^{-p} {t^{-1}}\sigma  ^{-1}   = \ $
$\dis \sigma ^{p+1} t \sigma ^{-(p+1)} \sigma t^{-1} \sigma^{-1}  = \
\sigma ^{p+1} t \sigma ^{-(p+1)} t^{-1}  \  t  \sigma t^{-1}  \sigma^{-1}$

\hfill $=\ [\sigma ^{p+1} ,t ][t,\sigma ] =[\sigma ^{p+1} ,t ] \ [\sigma, t] ^{-1} \in  W_0$ 

\item $\dis \sigma [t_1,t_2] \sigma^{-1}= \bigl[\sigma,[t_1,t_2]\bigr] \ [t_1,t_2]\in  W_0$, by Properties \ref{CommPtGene},

\item  $\dis T[\sigma ^p ,t ] T^{-1} = T \sigma ^p t \sigma\ ^{-p} t^{-1}  T^{-1}
 =  \underbrace{ T \sigma ^p \ \ \boldsymbol{T^{-1} \sigma^{-p} }}_{ \dis [ T,  \sigma ^p] } \  \underbrace{ \boldsymbol{\sigma ^{p} T} \  t \sigma ^{-p} t^{-1}  T^{-1} }_ {\dis   [\sigma ^{p}, Tt]}\in W_0,$
 
\item $\dis T[t_1, t_2] T ^{-1}= [T t_1  T^{-1}, T t_2 T^{-1}]\in {W_0}$.
 
\end{itemize}

$-$ We now prove that $\dis [W,W]=W_0$.  Due to the folkloric description for  the generators of the derived subgroup (See  Appendix),  it is plain that $$\dis\boldsymbol{ [W,W]=\ll  [\sigma^p, t] \ , \ [t_1, t_2] \gg_{W}}.$$

\smallskip
Therefore, $\dis [W,W]=\ll W_0  \gg_{W}=W_0$, since $W_0$ is normal in $W$.
\end{proof}

\smallskip
As a corollary of this lemma and Lemmas \ref{gene wx(kerl)}, we get

\begin{conse} \label{consgeneAb2} 
It holds that  $\dis \omega_x ([H, H]) =W_0=[W,W]$.
\end{conse}

\medskip

We turn now on to the proof of Item (1) of Theorem \ref{THF0}. Let $q\geq 5$.

\medskip

By hypothesis $\mathcal A _q < W$, then $W$ is not abelian and $H$ satisfies Item (2) of Theorem \ref{THGenF1}. In addition, by simplicity $\mathcal A _q=[\mathcal A _q ,\mathcal A _q ]<[W, W]=W_0$, due to Lemma \ref{gene [W,W]}.

\smallskip

Let $x\in [0,\frac{1}{q})$, by Consequences \ref{consgeneAb2},  we have $W_0 = \omega_x([H,H]) < \omega_x( \ker \ell)$. 
Therefore $\mathcal A _q < \omega_x( \ker \ell)$ and Proposition \ref{KEY} applies to conclude that $\ker \ell$ and then $H$ are not virtually solvable.

\smallskip

Finally, $H$ is not a linear group, since by Schur Theorem ([Sch11]) any linear torsion group is virtually abelian and $\ker \ell$ is a torsion group which is not virtually solvable so not virtually abelian.

\subsection{Proof of Item (2) of Theorem \ref{THF0}.}

In this section,  we establish the second item of Theorem \ref{THF0}. That is,  we prove that if 
 $H=\HA$ as in Theorem \ref{THF0} and $p\in \N^*$, then

\smallskip

\centerline{\bf $H$ is $p$-solvable if and only if $W=\langle \sigma, \mathfrak S(\QR) \rangle$ is $p$-solvable.}

\begin{proof} Let $p\in \N\setminus\{0\}$, we denote by $D^p (H)$ the $p$-th derived subgroup of $H$ (i.e. $H^0=H$, and the $(k+1)$-derived subgroup of $H$ is given inductively by $D^{k+1} (H)= [D^k (H), D^k (H)]$). It holds that

\medskip

$D^p (H)=\{\id\}$ $\Lra$ $D^{p-1} ([H,H])=\{\id\}$

$\Lra\limits^{(1)}$ 
$\forall x\in [0, \frac{1}{q})$, $\omega_x(D^{p-1} ([H,H]))=\{\id\}$

$\Lra\limits^{(2)}$
$\forall x\in [0,\frac{1}{q})$, $D^{p-1} (\omega_x ([H, H]))=\{\id\}$

$\Lra\limits^{(3)}$
$D^{p-1} ([W,W])=\{\id\}$
$\Lra$  $D^{p} (W)=\{\id\}$

\medskip

{\small $\boxed{\Lra\limits^{(1)}}$} \ holds by Proposition \ref{critere Id et abelien} (1).

\medskip

{\small $\boxed{\Lra\limits^{(2)}}$} \ is true since $\omega_x$ is a morphism on $\ker \ell$.

\medskip

{\small $\boxed{\Lra\limits^{(3)}}$} \ is provided by Consequences \ref{consgeneAb2}.

\bigskip

Finally, as $H$ is $p$-solvable if and only if $\left\{\begin{array}{ll}
D^p (H)=\{\id\} \cr
D^{p-1} (H) \not =\{\id\} 
\end{array}\right\}$,
the previous calculus implies that $H$ is $p$-solvable if and only if  $W$ is $p$-solvable.
\end{proof}

\section{Proofs of Theorem  \ref {THRFin} and Corollaries \ref{CPAb} and \ref{Noniso}} 

\subsection{Proofs of Items (1) and (2) of Theorem  \ref {THRFin}} 

\subsubsection{Torsion $-$ Proof of Item (1) of Theorem  \ref {THRFin}} 

According to Lemma \ref{L2EM} (4), it holds that the torsion elements of $\HA$ form the normal subgroup $\ker \ell$  of $\HA$. In addition, by Lemma \ref{L2EM} (1), it holds that 
$\frac{\HA}{\ker \ell}\simeq \ell(\HA)=A$.

\smallskip

Finally, $H\simeq \ker \ell \rtimes A$ since $\ker \ell\lhd H$, $H=\ker \ell . \ R_A$ by Lemma \ref{L2EM} (2) and $\ker \ell \cap R_A=\{\id\}$ by the choice of  $A$.

 \subsubsection{Abelianization $-$ Proof of Item (2) of Theorem  \ref {THRFin}} \label{Abel2a}

We first prove

\begin{prop} \label{Abelianization}  The abelianization $\dis \frac{H}{[H,H]}$ of $H$ is isomorphic to the group  $\dis {\dis \frac{\ker \ell}{[H,H]}} \times A$.
\end{prop}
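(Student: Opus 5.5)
The plan is to exploit the semidirect product structure from Item (1) of Theorem \ref{THRFin}: $H=\ker\ell\rtimes R_A$, with $\ker\ell\lhd H$ and $\ker\ell\cap R_A=\{\id\}$. Since $[H,H]\lhd\ker\ell$ by Lemma \ref{L2EM} (4), the quotient $H/[H,H]$ contains the subgroup $\ker\ell/[H,H]$. The goal is to show that $H/[H,H]$ is the internal direct product of this subgroup and the image of $R_A$.

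First, the restriction $\bar\ell : H/[H,H]\to A$ of $\ell$ is well defined, because $[H,H]<\ker\ell$. It is surjective since $\ell(H)=A$ by Lemma \ref{L2EM} (1), and its kernel is exactly $\ker\ell/[H,H]$. This gives a short exact sequence
$$1\to \ker\ell/[H,H]\to H/[H,H]\to A\to 1.$$

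Next, I would use the section $s: A\to H/[H,H]$, $a\mapsto R_a[H,H]$. It is a homomorphism because $a\mapsto R_a$ is one, and $\bar\ell\circ s=\id_A$ since $\ell(R_a)=a$. Hence the sequence splits. Because $H/[H,H]$ is abelian, a split extension is a direct product. Concretely, the map
$$\ker\ell/[H,H]\times A\to H/[H,H],\qquad (k[H,H],a)\mapsto kR_a[H,H]$$
is a homomorphism, since all elements commute in the abelian quotient. It is surjective because every $f=P_fR_{\ell(f)}$ by Lemma \ref{L2EM} (2). It is injective: if $kR_a\in[H,H]$, applying $\ell$ gives $a=0$, and then $k\in[H,H]$. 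This proves the isomorphism.

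There is no real obstacle here. The only point needing care is that $s(A)\cap(\ker\ell/[H,H])$ is trivial, and this follows directly from $\ell(R_a)=a$ together with $A\simeq\mathbb Z^s$ embedding via $a\mapsto R_a$. Everything else is formal.
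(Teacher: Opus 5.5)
Your argument is correct, but you build the isomorphism in the opposite direction from the paper, and this changes what has to be checked.

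\textbf{The paper's route.} The paper works with $D: H/[H,H]\to \ker\ell/[H,H]\times A$, $h[H,H]\mapsto (P_h[H,H],\ell(h))$. Since $h\mapsto P_h$ is not a homomorphism on $H$, it first proves Sublemma \ref{P(hh')}, namely $P_{h^{-1}}=P_h^{-1}[P_h,R_{-\ell(h)}]$ and $P_{h_1h_2}=P_{h_1}P_{h_2}[P_{h_2}^{-1},R_{\ell(h_1)}]$. It then uses these identities to show that $D$ is well defined and multiplicative modulo $[H,H]$, and checks injectivity and surjectivity by hand.

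\textbf{Your route.} You map $\ker\ell/[H,H]\times A$ into $H/[H,H]$ by $(k[H,H],a)\mapsto kR_a[H,H]$. This is a homomorphism for free, because it is the product of two homomorphisms into an abelian group: the inclusion, and the section $a\mapsto R_a[H,H]$ of $\bar\ell$. Injectivity then needs only $\ell(R_a)=a$ and $[H,H]<\ker\ell$. Surjectivity needs only $f=P_fR_{\ell(f)}$.

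\textbf{Comparison.} Your proof bypasses the commutator identities entirely. It reduces to the standard fact that a split extension whose middle group is abelian is a direct product. Its inverse is exactly the paper's $D$, so you obtain for free that $D$ is well defined and a morphism. The paper's version has only the minor advantage of writing the projection onto the first factor explicitly as $h\mapsto P_h[H,H]$.

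\textbf{Minor points.} Your appeal to Item (1) of Theorem \ref{THRFin} is harmless: it is proved earlier and you do not really use it. The mention of $A\simeq\Z^s$ is unnecessary, since the triviality of $s(A)\cap(\ker\ell/[H,H])$ already follows from $\ell(R_a)=a$.
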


We recall that, according to Lemma \ref{L2EM} (2) and (3), $[H,H] {\dis \Sgn } \ker \ell$ and any $h\in H$ can be uniquely written as $h = P_h R_{\ell(h)}$ with $P_h\in  \ker \ell$. More precisely, Proposition \ref{Abelianization} is established by proving  

\begin{lemm}\label{AbelMorph} \ 

The map  $\dis D: \ \left\{\begin{array}{ll} 
{\dis \frac{H}{[H,H]} } \quad \to \ \ \  {\dis \frac{\ker \ell}{[H,H]}} \times A  \cr
 h. [H,H] \ \ \mapsto \ \ \bigl(P_h . [H,H], \  \ell(h) \bigr)
\end{array}\right.$ \ \ 
is well defined and is an isomorphism.
\end{lemm}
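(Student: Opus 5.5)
The plan is to check in turn that $D$ is well defined, that it is a homomorphism, and that it is bijective. Throughout I use that the decomposition $h=P_hR_{\ell(h)}$ of Lemma \ref{L2EM} (2) is unique: if $P R_a = P' R_{a'}$ with $P,P'\in\ker\ell$ and $a,a'\in A$, applying $\ell$ gives $a=a'$, hence $P=P'$.

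\emph{Well defined.} If $h'=hc$ with $c\in[H,H]$, then $\ell(h')=\ell(h)$ because $[H,H]<\ker\ell$ by Lemma \ref{L2EM} (4). Moreover
$$P_{h'}=h'R_{-\ell(h)}=P_h\,\bigl(R_{\ell(h)}\,c\,R_{-\ell(h)}\bigr).$$
The factor in parentheses lies in $[H,H]$ since $[H,H]$ is normal in $H$. Hence $P_{h'}[H,H]=P_h[H,H]$.

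\emph{Homomorphism.} Since $\ell$ is a morphism, the second coordinate is additive. For the first coordinate, write
$$hg=P_hR_{\ell(h)}P_gR_{\ell(g)}=P_h\bigl(R_{\ell(h)}P_gR_{-\ell(h)}\bigr)R_{\ell(h)+\ell(g)}.$$
By uniqueness, $P_{hg}=P_h\cdot R_{\ell(h)}P_gR_{-\ell(h)}$. Now $R_{\ell(h)}P_gR_{-\ell(h)}=[R_{\ell(h)},P_g]\,P_g$, and the commutator lies in $[H,H]$. Therefore $P_{hg}[H,H]=P_hP_g[H,H]$. The quotient $\ker\ell/[H,H]$ is abelian, being a subgroup of $H/[H,H]$, so the first coordinate map is a homomorphism into an abelian group. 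Thus $D$ is a morphism.

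\emph{Injective.} If $D(h[H,H])$ is trivial, then $\ell(h)=0$ and $P_h\in[H,H]$. But $\ell(h)=0$ gives $h=P_h$, so $h\in[H,H]$.

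\emph{Surjective.} Given $(P[H,H],a)$ with $P\in\ker\ell$ and $a\in A$, set $h=PR_a\in H$; this uses $R_A<H$. Then $\ell(h)=a$, and by uniqueness $P_h=P$, so $D(h[H,H])=(P[H,H],a)$.

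The only mildly delicate point is the homomorphism step. The twist by conjugation by $R_{\ell(h)}$ must disappear modulo $[H,H]$, and the argument above handles this by rewriting the conjugate as a commutator times $P_g$.
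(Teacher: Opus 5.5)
Your proof is correct and follows essentially the same route as the paper. Your product formula $P_{hg}=P_h\,R_{\ell(h)}P_gR_{-\ell(h)}$ is exactly the paper's Sublemma \ref{P(hh')}, since $R_{\ell(h)}P_gR_{-\ell(h)}=P_g[P_g^{-1},R_{\ell(h)}]$, and your choice $h=PR_a$ for surjectivity is slightly cleaner than the paper's $R_af$, for which $P_{R_af}=R_afR_{-a}$ equals $f$ only modulo $[H,H]$.
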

\begin{sublemm} \label{P(hh')} Let $h, h_1,h_2 \in H$. Then 
$$P_{h^{-1}} \ = P_{h}^{-1} \ \bigl[  P_h , R_{-\ell(h)}   \bigr] \quad \text{ and } \quad
P_{h_1h_2} = P_{h_1} P_{h_2} \ \bigl[  P_{h_2}^{ -1}, R_{\ell(h_1)}   \bigr].$$
\end{sublemm}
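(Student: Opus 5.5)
We only need the defining relations $h = P_h R_{\ell(h)}$ with $P_h \in \ker\ell$, the fact that $\ell$ is a morphism (Lemma \ref{L2EM} (1)), and the uniqueness of this decomposition. Uniqueness holds because $\ker\ell \cap R_A = \{\id\}$: an element of $R_A$ lying in $\ker\ell$ is $R_a$ with $a=\ell(R_a)=0$. So to identify $P_g$ for some $g\in H$, it is enough to write $g$ as (element of $\ker\ell$)$\cdot R_{\ell(g)}$. We take the commutator convention $[a,b]=aba^{-1}b^{-1}$, as in the paper.

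For the product formula, we first compute $\ell(h_1h_2)=\ell(h_1)+\ell(h_2)$, so $R_{\ell(h_1h_2)}=R_{\ell(h_1)}R_{\ell(h_2)}$. Then
$$h_1h_2 = P_{h_1}R_{\ell(h_1)}P_{h_2}R_{\ell(h_2)} = P_{h_1}\bigl(R_{\ell(h_1)}P_{h_2}R_{\ell(h_1)}^{-1}\bigr)R_{\ell(h_1)}R_{\ell(h_2)}.$$
Next we rewrite the conjugate. Set $a=P_{h_2}$ and $R=R_{\ell(h_1)}$. We have $RaR^{-1}=a\,(a^{-1}RaR^{-1})=a\,[a^{-1},R]$. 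This gives
$$h_1h_2=P_{h_1}P_{h_2}\bigl[P_{h_2}^{-1},R_{\ell(h_1)}\bigr]\,R_{\ell(h_1h_2)}.$$
The element $P_{h_1}P_{h_2}[P_{h_2}^{-1},R_{\ell(h_1)}]$ lies in $\ker\ell$, because the first two factors do and every commutator lies in $[H,H]\lhd\ker\ell$. Uniqueness then yields the stated formula for $P_{h_1h_2}$.

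For the inverse, we write $h^{-1}=R_{-\ell(h)}P_h^{-1}=\bigl(R_{-\ell(h)}P_h^{-1}R_{\ell(h)}\bigr)R_{-\ell(h)}$, and $\ell(h^{-1})=-\ell(h)$. Again with $b=P_h$ and $R=R_{-\ell(h)}$, we have $Rb^{-1}R^{-1}=b^{-1}(bRb^{-1}R^{-1})=b^{-1}[b,R]$. So $h^{-1}=P_h^{-1}[P_h,R_{-\ell(h)}]\,R_{-\ell(h)}$, and the prefactor lies in $\ker\ell$ for the same reason as before. Uniqueness gives $P_{h^{-1}}=P_h^{-1}[P_h,R_{-\ell(h)}]$.

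There is no real obstacle here. The only care needed is to keep the commutator convention consistent, since a different convention would change which inverse appears inside the bracket, and to invoke uniqueness of the decomposition. Alternatively, one can skip uniqueness and check directly that $g R_{-\ell(g)}$ equals the proposed expression, because $P_g=gR_{-\ell(g)}$ by definition.
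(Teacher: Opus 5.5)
Your proof is correct and follows the paper's argument. The paper also writes $h^{-1}=R_{-\ell(h)}P_h^{-1}$ and $h_1h_2=P_{h_1}R_{\ell(h_1)}P_{h_2}R_{\ell(h_2)}$, moves the rotation to the right using the same commutators, and then reads off $P_{h^{-1}}$ and $P_{h_1h_2}$ by comparing with $P_gR_{\ell(g)}$; your appeal to uniqueness (or simply to $P_g=gR_{-\ell(g)}$) makes explicit the final identification step that the paper leaves implicit.
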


\begin{proof} By the definition of $P_f$, for $f\in H$, it holds that 
$$P_{h^{-1}}  R_{\ell(h^{-1})}= h^{-1}= (P_h R_{\ell(h)})^{-1}= R_{-\ell(h)} P_h^{-1} = P_h^{-1} \ \bigl[  P_h , R_{-\ell(h)} \bigr] R_{-\ell(h)} \text{ and} $$
$$P_{h_1h_2} R_{\ell(h_1h_2)} = h_1 h_2 = P_{h_1} \ \ \underbrace{R_{\ell(h_1) } P_{h_2}}_{ P_{h_2} [P_{h_2}^{ -1}, R_{\ell(h_1)} ] R_{\ell(h_1) } } \ \ R_{\ell(h_2)}=
P_{h_1} P_{h_2} \ \bigl[  P_{h_2}^{ -1}, R_{\ell(h_1)}   \bigr] \  R_{\ell(h_1)} R_{\ell(h_2)}.$$
This implies the required formulas.
\end{proof}

\begin{proof}[Proof of Lemma \ref{AbelMorph}] \ 

The map $D$ is well-defined. 

\begin{quote}
Indeed, Let $h_1, h_2 \in H$ such that $h_2=h_1 C$ with $C\in [H,H]$.  
It holds that $\ell(h_2) =\ell (h_1) + \ell (C) = \ell (h_1) $.  In addition, according to Sublemma \ref{P(hh')} and the fact that $P_f=f$ for any $f \in \ker \ell$, we have  $P_{h_2} =P_{h_1 C} =P_{h_1}  P_C [P_C^{-1}, R_{\ell(h_1)}] = P_{h_1}  C [P_C^{-1}, R_{\ell(h_1)}]$. 
As $C [P_C ^{-1}, R_{\ell(h_1)}]\in [H,H]$, we get that $P_{h_2} \in P_{h_1}.[H,H]$.
\end{quote}

\smallskip

The map $D$ is a morphism, since $\ell$ is a morphism and Sublemma \ref{P(hh')} asserts that the second coordinate is also a morphism.

The  map $D$ is surjective, as for any $a\in A$ and any $f \in \ker \ell$, we have $\ell(R_a f) =a$ and $P_{R_a f}=f$.

Finally, $D$ is injective since $\ell(h_2) =\ell (h_1) $ and  $P_{h_2} =P_{h_1} C$ with $C\in [H,H]$  imply that $h_1^{-1}h_2 = P_{h_1^{-1}h_2}= P_{h_1^{-1}}P_{h_2 } C'$ by Sublemma \ref{P(hh')}. Then $h_1^{-1}h_2 =CC'\in [H,H]$.
\end{proof}

\begin{prop}\label{FinitePart}
The map $\rho : \left\{\begin{array}{ll} 
\mathfrak S(Q) \to  \  \ \frac{\ker \ell}{[H,H]} \cr
\ \  \tau \ \  \ \mapsto  E_\tau . [H,H]
\end{array} \right.$,  where $E_\tau\in \mathcal G_{1/q}$ is defined by $\mathfrak S(E_\tau)=\tau$,  
is a surjective morphism whose kernel satisfies $[\mathfrak S(\QR),\mathfrak S(\QR)] <\ker \rho< [W, W] \cap \mathfrak S(\QR)$.
\end{prop}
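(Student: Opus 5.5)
The plan is to obtain $\rho$ as the composite of the isomorphism $\mathfrak S^{-1}:\mathfrak S(\QR)\to \QR$, $\tau\mapsto E_\tau$, with the quotient map $\ker\ell\to \ker\ell/[H,H]$. Recall that $\QR<\ker\ell$ and $[H,H]\lhd\ker\ell$ by Lemma \ref{L2EM} (4). The map $\tau\mapsto E_\tau$ is a morphism since $\mathfrak S$ is an isomorphism, so $\rho$ is a morphism as a composite of morphisms.

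\textbf{Surjectivity.} By Lemma \ref{L3EM} (1), $\ker\ell$ is generated by the elements $R_a g R_a^{-1}$ with $g\in\QR$ and $a\in A$. Modulo $[H,H]$ we have
$$R_agR_a^{-1}=[R_a,g]\,g\equiv g,$$
so every generator of $\ker\ell$ is congruent to an element of $\QR$. Since the quotient $\ker\ell/[H,H]$ is abelian, the image of $\QR$ is already all of it. Hence $\rho$ is onto.

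\textbf{First inclusion.} Since the target $\ker\ell/[H,H]$ is abelian, $\ker\rho$ contains $[\mathfrak S(\QR),\mathfrak S(\QR)]$. One can also see this directly: $E_{[\tau_1,\tau_2]}=[E_{\tau_1},E_{\tau_2}]\in[H,H]$.

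\textbf{Second inclusion.} This is the substantive step, and it uses the local permutations. Suppose $\tau\in\ker\rho$, so $E_\tau\in[H,H]$. Fix any $x\in[0,\frac1q)$. By Proposition \ref{WMorph}, $\omega_x(E_\tau)=\mathfrak S(E_\tau)=\tau$. By Consequences \ref{consgeneAb2}, $\omega_x([H,H])=[W,W]$. Therefore $\tau\in[W,W]$, and trivially $\tau\in\mathfrak S(\QR)$, which gives $\ker\rho\subset[W,W]\cap\mathfrak S(\QR)$.

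The main obstacle is only being careful that $\omega_x$ is a morphism defined on $\ker\ell$, which contains both $\QR$ and $[H,H]$, so evaluating it on $E_\tau$ is legitimate. No extension of $\omega_x$ to $H$ is needed (the Claim shows such an extension fails anyway). Everything else is formal bookkeeping with the generating sets of Lemma \ref{L3EM}.
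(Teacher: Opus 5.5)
Your proof is correct and follows the paper's argument. The morphism property comes from composition, the first inclusion from abelianness of $\ker\ell/[H,H]$, and the second from $\omega_x(E_\tau)=\tau$ together with $\omega_x([H,H])=[W,W]$ (Consequences \ref{consgeneAb2}). The only difference is surjectivity: you observe that the image of $Q$ in $\ker\ell/[H,H]$ is a subgroup containing the classes of all generators $R_agR_a^{-1}=[R_a,g]\,g$ (abelianness of the quotient is not even needed for this). That is just a shorter proof of the paper's inductive Claim $\ker\ell=Q\,[H,H]$.
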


\begin{proof} 

It is clear that $\rho$ is a morphism and its surjectivity will be provided by 

\begin{claim}\label{claimab}
Any $f\in \ker \ell$ can be written as $f= r_f C_f$ where  $r_f\in \QR$ and $C_f \in [H,H]$.
\end{claim}

\begin{quote}
Indeed, we argue by induction on the minimal number $m$ such that $f$ can be written as a product of $m$ elements of the form $R_{a_i} r_i R_{-a_i}$ with $a_i\in A$ and $r_i\in \QR$ (existence is provided by Lemma \ref{L3EM} (1)).

\smallskip

If $m=0$ then $f=\id\in \QR$ and the claim is obvious.

\smallskip

Let $m\in \mathbb N$, we suppose that any element of $\ker \ell$ which is a product of $m$ elements of the form $R_{a_i} r_i R_{-a_i}$ can be written as $r C$ with $r\in \QR$ and $C\in [H,H]$. Let $f\in \ker \ell$ a product of $m+1$ elements $R_{a_i} r_i R_{-a_i}$. We have 

$\dis f = \prod_{i=1}^{m+1} R_{a_i} r_i R_{-a_i} =  \prod_{i=2}^{m+1} R_{a_i} r_i R_{-a_i} \  \ R_{a_1} r_1 R_{-a_1} $, and by induction hypothesis, 

$\dis \prod_{i=2}^{m+1} R_{a_i} r_i R_{-a_i}={\dis r C}$ with $r\in \QR$ and $C \in [H,H]$. Therefore 

$\dis f =r C r_1 [r_1^{-1}, R_{a_1}] = \underbrace{r r_1 }_{\in \QR} \underbrace{ C [C^{-1}, r_1^{-1}] \ [r_1^{-1}, R_{a_1}]}_{\in  [H,H]}$ has the required form.
\end{quote}

\smallskip

We turn now on to the proof of Proposition \ref{FinitePart}.

The previous claim means that any class modulo $[H,H]$ contains an element of $\QR$ and then gives rise to the surjectivity of $\rho$.

\medskip

In addition, $\tau \in  \ker  \rho$ if and only if $E_\tau \in  \QR \cap [H,H]$. In particular, since $\QR \cap [H,H]$ contains $[\QR,\QR]$, we have  $[\mathfrak S(\QR),\mathfrak S(\QR)] <\ker \rho$. Moreover, if $r \in\QR \cap [H,H]$ then $\mathfrak S(r)=\omega_0 (r) \in
\omega_0 (\QR \cap [H,H])= \mathfrak S(Q)\cap[W,W]$, by Proposition \ref{WMorph}  and Consequences \ref{consgeneAb2}.

This gives the other inclusion and concludes the proof of Proposition \ref{FinitePart}.
\end{proof}

We turn now on to the proof of Item (2) of Theorem \ref{THRFin}.

\smallskip

It is obvious that $\frac{\ker \ell}{[H,H]} $ is an abelian group. 
 
The surjectivity of $\rho$ and the first isomorphism theorem imply that $ \frac{\ker \ell}{[H,H]} \simeq \im  \rho \simeq \frac{\mathfrak S(Q)}{\ker\rho}$. 

Then combining Propositions \ref{Abelianization} and \ref{FinitePart},  we get Item (2) of Theorem \ref{THRFin}.

\begin{rem} When the two groups involved in Proposition \ref{FinitePart} ( $[\mathfrak S(\QR),\mathfrak S(\QR)]$ and $ [W,W] \cap \mathfrak S(Q)$) coincide, we can provide the abelianization as in  Corollary \ref{CPAb}. However, they might be distinct and the description of the abelianization requires further studies (see Example \ref{Subtil7.4}).
\end{rem}

\subsection{Proof of Corollaries \ref{CPAb} and \ref{Noniso}} \label{consgeneAb3}  
\subsubsection{Proof of Corollary \ref{CPAb}} \ 

Let $A$, $\QR$ be as in Definition \ref{defH}, $\sigma=\sigma_q$ be the $q$-cycle $(1,2,\cdots,q) \in \mathcal S_q$ and $W=\langle\mathfrak S(Q), \sigma \rangle$. 

\smallskip

\begin{enumerate}
\item If $[W,W] \cap \mathfrak S(Q)=[\mathfrak S(Q),\mathfrak S(Q)]$ then by Item (2) of Theorem \ref{THRFin}, we get $N_Q= [\mathfrak S(Q),\mathfrak S(Q)]$ and therefore $F\simeq \frac{\mathfrak S(Q)}{N_Q}= \frac{\mathfrak S(Q)} {[\mathfrak S(Q),\mathfrak S(Q)]}$.

\smallskip

In particular, if $\sigma\in \mathfrak S(Q)$ then $W=\mathfrak S(Q)$ and $[W,W] =[\mathfrak S(Q),\mathfrak S(Q)]$. Hence we are in the previous situation.

\medskip

\item If $\mathfrak S(Q)=\langle \tau \rangle$ with $\tau \in \mathcal S_q\setminus \mathcal A_q$ then $[\mathfrak S(Q),\mathfrak S(Q)]=\{\id\} $ and $ [W,W] \cap \mathfrak S(Q)=\{\id\} $ since $\tau \notin \mathcal A_q$. 
Hence,  by Item (2) of Theorem \ref{THRFin}, we have  $N_Q= \{\id\} $ and then $F\simeq \mathfrak S(Q)$.

\medskip

\item If $\mathfrak S(Q)=[\mathfrak S(Q),\mathfrak S(Q)]$, then the group $[W,W]$ contains $\mathfrak S(Q)$ and  $ [W,W] \cap \mathfrak S(Q)=\mathfrak S(Q)=[\mathfrak S(Q),\mathfrak S(Q)]$. Hence,  by Item (2) of Theorem \ref{THRFin}, $N_Q= \mathfrak S(Q)$ and  $F= \{\id\}$.

\smallskip

In particular, if $\mathfrak S(Q)= \mathcal A_q$ ($q\geq 5$) then $\mathfrak S(Q)=[\mathfrak S(Q),\mathfrak S(Q)]$ and we get $F=\{\id\}$.
\end{enumerate}

\subsubsection{Proof of Corollary \ref{Noniso}} \ 
\begin{enumerate}
\item This item follows from the fact that if $A\not\simeq A'$ then the groups $\frac{\HA}{T(\HA)}$ and $\frac{\HAp}{T(\HAp)}$ are not isomorphic.

\medskip

\item This item is due to the invariance by $\iet$-conjugacy of the $\saf$-invariant. For backgrounds on $\saf$, we refer the reader to \cite[Section 2]{Bos}.

Indeed, for any $g\in H$, we have $g= R_{a} P$, where $a\in A$ and $P$ of finite order (by Lemma \ref{L2EM} (3)) then $\saf(g)=\saf(R_a) = a\wedge 1$. Therefore $\saf(H) =\{ a\wedge 1, a \in A\}$.
This implies that if $A\not= A'$ then $\saf(H)\not=\saf(H')$.

\medskip

\item Suppose that $\sigma_q \in  \mathfrak S(\QR)$ and $\sigma_{q'} \in  \mathfrak S(\QR')$ then $W= \mathfrak S(\QR)$ and $W'= \mathfrak S(\QR')$.  Therefore, according to Corollary \ref{CPAb},  $F\simeq \mathfrak S(\QR)_{\ab}$ and $F'\simeq \mathfrak S(\QR')_{\ab}$.  In particular, as $\QR$ and $\QR'$ have non isomorphic abelianizations, $\HA$ and $\HAp$ neither ; so they are not isomorphic. 
\end{enumerate}

\section{Comparison with lamplighter groups and examples}
In this section, we are interested in describing conditions under which the group $H_{A,Q}$ is isomorphic to a Lamplighter group  $L\wr G$ where $L$, $G$ are non trivial finitely generated groups and $G$ is not a torsion group.

\subsection{Break points.}

\begin{defi} Let $f$ be an iet seen as an element of $\IET(\R/\Z)$. 

We denote by $\BP_{\mathbb \R/\mathbb \Z}(f)$  the set consisting of the discontinuity points of $f$ on $\mathbb \R/\mathbb \Z$. 
\end{defi}

\begin{proper} Let $f_1, \cdots, f_n \in \IET(\R/\Z)$. It holds that
$$\BP_{\mathbb \R/\mathbb \Z}(f_n \circ f_{n-1} \cdots  \circ f_1)  \subseteq \BP_{\mathbb \R/\mathbb \Z}(f_1) \cup f_1 ^{-1} \bigl( \BP_{\mathbb \R/\mathbb \Z}(f_2) \bigr) \cdots 
\cup  (f_1 ^{-1} \circ \cdots \circ f_{n-1} ^{-1}) \bigl( \BP_{\mathbb \R/\mathbb \Z}(f_n) \bigr)  $$
and if the sets $\dis (f_1 ^{-1}\circ  \cdots \circ f_{j-1} ^{-1}) \bigl( \BP_{\mathbb \R/\mathbb \Z}(f_j) \bigr)  $ are pairwise disjoint then the previous inclusion is an equality.
\end{proper}

\begin{conse} \label{consBP}Let $A$, $\QR$ be as in Definition \ref{defH}. 

Let $f_1, \cdots, f_n \in \ker \ell$  such that the sets $[\BP_{\mathbb \R/\mathbb \Z}(f_j)]_q$ are pairwise disjoint then $$\bigl[\BP_{\mathbb \R/\mathbb \Z}(f_n \circ f_{n-1} \cdots  \circ f_1) \bigr]_q= \sqcup_j [\BP_{\mathbb \R/\mathbb \Z}(f_j) ]_q$$

where  $[X]_q:=\cup_p R_{p/q} (X)$, for a given $X\subset \R/\Z$.
\end{conse} 

This follows from the previous property and the fact that the set of translations of $f\in \ker \ell$ is contained in $\{p/q, \ p=0, \dots , q-1\}$.

\bigskip

We now give an example of a realization of the classical Lamplighter group as a $\HA$.

\subsection{The lamplighter $\boldsymbol{L_{2,1}=\langle a,t \ | \ a^2=\id, \ [a, t^k a t^{-k}]=\id \rangle}$ as a $\boldsymbol{\HA}$} \label{ClLamp} \ 

\medskip

Let $H=\langle g,R_\alpha\rangle$ with $\alpha\notin \R/\Q$ and $g \in \mathcal G_{\frac{1}{4}}$ represented on Figure 1.

\vskip -3mm

\begin{figure}[h]\scalebox{.5}{
\begin{picture}(450,200)
\put(0,0){\line(1,0){200}} \put(0,200){\line(1,0){200}}
\put(0,0){\line(0,1){200}} \put(200,0){\line(0,1){200}}

\put(-5,-20){$0$}
\put(195,-20){$1$}

\put(-15,200){$1$}

\put(165,0){\dashbox(0,200){}}

\put(0,35){\dashbox(200,0){}}

\put(0,35){\line(1,1){165}}
\put(165,0){\line(1,1){35}}

\put (-15,35){$\alpha$}
\put (-30,165){$1-\alpha$}
\put (70,-30){\large{R}${}_{\alpha}$}

\put(240,0){\line(1,0){200}} \put(240,200){\line(1,0){200}}
\put(240,0){\line(0,1){200}} \put(440,0){\line(0,1){200}}

\put(235,-20){$0$}
\put(285,-20){${\frac{1}{4}}$}
\put(335,-20){${\frac{1}{2}}$}
\put(385,-20){${\frac{3}{4}}$}
\put(435,-20){$1$}

\put(290,0){\dashbox(0,200){}}
\put(340,0){\dashbox(0,200){}}
\put(390,0){\dashbox(0,200){}}
\put(390,0){\dashbox(0,200){}}

\put(240,50){\dashbox(200,0){}}
\put(240,100){\dashbox(200,0){}}
\put(240,150){\dashbox(200,0){}}

\put(240,100){\line(1,1){50}}
\put(290,50){\line(1,1){50}}
\put(340,0){\line(1,1){50}}
\put(390,150){\line(1,1){50}}

\put ( 320,-34){\large{$g\in \mathcal G_{\frac{1}{4}}$}}
\end{picture}

} \end{figure} 
 
\centerline{Figure 1}

\bigskip

Note that $H= H_{A,Q}$ with $A=\langle\alpha \rangle$ and $Q=\langle (1,3) \rangle <\mathcal G_{\frac{1}{4}}$ so that the morphism $\ell$ is well defined on $H$.  We claim that  \textit{the correspondence $\bds{\bigl( a \mapsto \ g}$ ,  $\bds{t \mapsto R _\alpha\bigr)}$ induces a faithful representation of $L_{2,1}$ in $\IET$.}

\smallskip

Indeed, this correspondence  induces a morphism $\rho : L_{2,1}\to \IET$, since 
$g^2=\id$ and   the conjugates of $g$ are pairwise commuting.

\begin{quote} This last property comes from the fact that  $\ker \ell$-orbits have cardinality at most  $2$ (as the translations of $g$ belong to $\{\pm \frac{1}{2}\}$, the set $\ker \ell(x)\subset \{x, x\pm \frac{1}{2}\}$) and then that  any element of $\ker \ell$ is an involution. In particular, $\ker \ell$ is abelian.
\end{quote} 

\medskip

In addition, let $\gamma \in \ker \rho$, we write $\gamma = t^m \prod_j t^{k_j} a t^{-k_j}$. 

Since $\ell(\rho(\gamma))=0$, $g^2=\id$ and the $R_{n\alpha} g R_{-n\alpha}$, $n\in \Z$ commute, it holds that 
$\rho(\gamma)= \prod_j R_{k_j\alpha} g R_{-k_j\alpha}$, where $k_j$ are pairwise distinct integers. 

\medskip

Let $g_j:=R_{k_j\alpha} g R_{-k_j\alpha}$, we have that $g_j \in \ker \ell$ and the sets $[\BP_{\mathbb \R/\mathbb \Z}(g_j) ]_q = R_{k_j\alpha} \bigl( [\BP_{\mathbb \R/\mathbb \Z}]_q(g)\bigr)$ are pairwise disjoint.

\medskip

Hence, using Consequence \ref{consBP}, we get that $$[\BP_{\mathbb \R/\mathbb \Z}]_q (\rho(\gamma))= \sqcup_j [\BP_{\mathbb \R/\mathbb \Z}]_q (R_{k_j\alpha} g R_{-k_j\alpha})=\sqcup_j R_{k_j\alpha} ([\BP_{\mathbb \R/\mathbb \Z}]_q(g))$$

Therefore, $[\BP_{\mathbb \R/\mathbb \Z}]_q (\rho(\gamma))$ is trivial if and only if $\gamma$ is. This proves that $\rho$ is injective.

\begin{rema}\ 

This embedding of $L_{2,1}$ in $\IET$ is slightly different from the one given in \cite[Section 4]{DFG}. 
\end{rema}

\subsection{Proof of of Corollary \ref{NonisoLamp}}

\begin{lemm}\label{LFinAb} \ 

Let $L$, $G$ be finitely generated groups such that $G$ contains an element $g_0$ of infinite order and $\bds{L\wr G\simeq H_{A,Q}}$ for some $A$ and $Q$ (as in Definition \ref{defH}).
\textbf{Then $L$ is a finite abelian group.}
\end{lemm}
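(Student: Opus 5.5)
The plan is to establish three things in turn: that every element of $L$ has finite order, that $L$ is finite, and that $L$ is abelian. Throughout, identify $H_{A,Q}$ with $L\wr G=\bigl(\bigoplus_{h\in G}L_h\bigr)\rtimes G$, where $L_h=hL_1h^{-1}$ is the copy of $L$ sitting at coordinate $h$. Copies at distinct coordinates commute elementwise.

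\textbf{$L$ is torsion.} Suppose $l\in L=L_1$ has infinite order. By Lemma \ref{L2EM} (4), $\ker\ell=T(H)$, so $\ell(l)\neq 0$. Fix $n\neq 0$ and put $l'=g_0^n l g_0^{-n}$. Since $\ell$ takes values in an abelian group, $\ell(l')=\ell(l)$, and hence $l\,l'^{-1}\in\ker\ell$ is torsion. On the other hand, $g_0^n\neq 1$ because $g_0$ has infinite order, so $l$ and $l'$ live in different coordinates $L_1$ and $L_{g_0^n}$. Therefore $l\,l'^{-1}$ has infinite order in $\bigoplus L_h$, which is a contradiction.

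\textbf{$L$ is finite.} By the previous step, $L\subset T(H)=\ker\ell$. The group $\ker\ell$ is locally finite (Lemma \ref{L2EM} (4)) and $L$ is finitely generated, so $L$ is finite.

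\textbf{$L$ is abelian.} This is the main step. Suppose $l_1,l_2\in L$ do not commute, and set $c=[l_1,l_2]\neq \id$. For $n\in\Z$ let $\iota_n:L\to L_{g_0^n}$, $l\mapsto g_0^nlg_0^{-n}$, and put $c_n=\iota_n(c)$. The elements $g_0^n$ are pairwise distinct, so the coordinates $L_{g_0^n}$ are pairwise distinct. Since $L_{g_0^n}\subset\ker\ell$, for each $x\in[0,\frac1q)$ we get a homomorphism $\varphi_{n,x}=\omega_x\circ\iota_n:L\to\mathcal S_q$.

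\emph{Pigeonhole claim.} If $n\neq m$ and $\varphi_{n,x}=\varphi_{m,x}$, then $\varphi_{n,x}(c)=\id$. Indeed, $\iota_n(l_1)$ and $\iota_m(l_2)$ commute, so $\varphi_{n,x}(l_1)=\varphi_{m,x}(l_1)$ commutes with $\varphi_{n,x}(l_2)$.

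Let $N=|\mathrm{Hom}(L,\mathcal S_q)|<\infty$, and set $S_n=\{x\in[0,\frac1q):\ \omega_x(c_n)\neq\id\}$. By the claim, the homomorphisms $\varphi_{n,x}$ are pairwise distinct for the indices $n$ with $x\in S_n$. Hence every $x$ belongs to at most $N$ of the sets $S_n$.

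\emph{Measure estimate.} By Property \ref{formula f w}, $x\in S_n$ exactly when $c_n$ moves one of the points $x+\frac{i-1}q$. So $S_n$ is the image of the support of $c_n$ under $y\mapsto y \bmod \frac1q$. Consequently $S_n$ is a finite union of intervals with $\mathrm{Leb}(S_n)\geq \mathrm{Leb}(\mathrm{supp}\,c_n)/q$. Since $c_n=g_0^ncg_0^{-n}$ and $\iet$s preserve Lebesgue measure, $\mathrm{supp}\,c_n=g_0^n(\mathrm{supp}\,c)$ has measure $\mu:=\mathrm{Leb}(\mathrm{supp}\,c)>0$; here $\mu>0$ because $c\neq\id$ is an $\iet$. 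Thus infinitely many sets $S_n\subset[0,\frac1q)$ each have measure at least $\mu/q$. Integrating $\sum_n\mathbf 1_{S_n}\leq N$ over $[0,\frac1q)$ gives a contradiction. Hence $L$ is abelian.

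The step I expect to need the most care is this last measure/pigeonhole argument. The points to verify are that $\omega_x$ is defined for every $x$ on the relevant elements, and that distinct $n$ really give distinct coordinates; the latter follows from $g_0$ having infinite order.
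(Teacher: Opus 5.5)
Your proof is correct, and it takes a genuinely different route from the paper's.

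The paper argues from outside results in three steps:
\begin{enumerate}
\item Since $L\wr\langle g_0\rangle$ embeds in $\IET$, and Dahmani--Fujiwara--Guirardel showed that $\Z\wr\Z$ does not embed, every element of $L$ has finite order.
\item A finitely generated torsion group $L$ is then either finite or an infinite finitely generated torsion (Burnside) group. The latter is excluded because $H_{A,Q}$ is elementary amenable (Theorem \ref{THGenF1} together with Chou's theorem).
\item A finite $L$ with $L\wr\Z$ embedded in $\IET$ is abelian, again by Dahmani--Fujiwara--Guirardel.
\end{enumerate}

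You stay entirely inside the paper's own machinery:
\begin{enumerate}
\item Torsion follows from $\ell(g_0^n l g_0^{-n})=\ell(l)$ together with $\ker\ell=T(H)$. This step only needs $G\neq\{\id\}$.
\item Finiteness follows directly from the local finiteness of $\ker\ell$ (Lemma \ref{L2EM} (4)). This makes the appeal to Chou unnecessary.
\item Commutativity comes from your pigeonhole claim on the homomorphisms $\omega_x\circ\iota_n$, combined with the measure count. This replaces the Dahmani--Fujiwara--Guirardel theorem on $F\wr\Z$, and it only uses that $G$ is infinite.
\end{enumerate}

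The points you flagged all check out:
\begin{itemize}
\item $\omega_x$ is defined at every $x$, because elements of $\IET$ are right-continuous.
\item $\iota_n(L)\subset\ker\ell$, because $L\subset\ker\ell$ and $\ker\ell$ is normal.
\item By Property \ref{formula f w}, $x\in S_n$ exactly when $c_n$ moves some $x+\frac{i-1}{q}$. Hence $S_n$ is the projection modulo $\frac1q$ of the support of $c_n$, a finite union of intervals of measure at least $\mu/q$.
\item The map $n\mapsto\varphi_{n,x}$ is injective on $\{n : x\in S_n\}$, exactly as you argue.
\end{itemize}

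What the paper's route buys is generality. Its torsion and abelianness conclusions hold for any wreath product $L\wr G$ embedded in $\IET$, and the $H_{A,Q}$ structure is used only to rule out an infinite torsion $L$. What your route buys is an elementary, self-contained proof specific to $H_{A,Q}$, with no external citations.
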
 

\begin{proof}
Let $L$, $G$ be finitely generated groups such that  $H=L\wr G$ embeds in $\IET$ and $G$ contains an element $g_0$ of infinite order.
Then $L\wr \langle g_0 \rangle $ embeds in $\IET$.  But it is proved  in \cite[Section 4]{DFG} that $\Z \wr \Z$ is not a subgroup of $\IET$, therefore any element of $L$ has finite order.
 
\medskip
 
Moreover, as $L$ is finitely generated  either  $L$ is Burnside group or $L$ is finite, in this last case $L$ is abelian by \cite[Section 4]{DFG}.

Suppose that, in addition, $H=H_{A,Q}$ for some $A$ and $Q$. By Theorem \ref{THGenF1}, $H$ is elementary amenable and then by \cite[Theorem 2.3]{Chou}, it does not contain Burnside groups. 
\end{proof}

\begin{proof}[Proof of Corollary \ref{NonisoLamp}.] We have to prove that : 

\textit{``$H_{A,Q}\simeq L\wr G$ with $L$ finitely generated and $G\simeq \Z^k$ if and only if $V=\langle \sigma^p \mathfrak S(Q)\sigma^{-p}, \ p\in \IN \ \rangle$ is abelian.''}

\medskip

For the direct implication, looking at the torsion groups, we have

\ \quad $T(L\wr G )=\bigoplus\limits_{g\in G} L^g $ \quad and 
\ \quad $T(H_{A,Q}) =\ker \ell$

\smallskip

As $L$ is abelian, $\bigoplus\limits_{g\in G} L^g $ is abelian and we get that $\ker \ell$ is abelian. By Proposition \ref{critere Id et abelien} and  Lemma \ref{gene wx(kerl)}, this is equivalent to  $V=\langle \sigma^p \mathfrak S(Q)\sigma^{-p}, \ p\in \IN \ \rangle$ is abelian.

\bigskip

For the reverse implication, we have that $\HA=\ker \ell \rtimes R_A$ and $\ker \ell =\langle  R_a g  R_a^{-1}, a\in A, g\in Q \rangle$ is abelian by hypothesis, Proposition \ref{critere Id et abelien} and Lemma \ref{gene wx(kerl)}.

\smallskip

We now check that $ R_a g  R_{a}^{-1}$ and $R_{a'} g' R_{a'}^{-1}$ are distinct provide that $a$ and $a'$ are distinct.
\begin{quote} 
Indeed,  $R_a g  R_{a}^{-1}= R_{a'} g' R_{a'}^{-1}$ implies that $g$ and $g'$ are conjugate through an irrational rotation $R_b$. Comparing break point sets, 
 we get $[\BP_{\mathbb \R/\mathbb \Z}]_q (g) \subset [ \{ 0 \} ]_q$ 
and $[\BP_{\mathbb \R/\mathbb \Z}]_q (R_b g R_{-b})\subset [\{b\}]_q$, a contradiction.
\end{quote}
 
Therefore  $\ker \ell = \bigoplus\limits_{a\in A} Q^{R_a}$ and $\HA= \bigoplus\limits_{a\in A} Q^{R_a} \rtimes R_A = Q\wr R_A$.

\medskip

In this case, $V$ is abelian and then $Q$ is abelian. Computing abelianizations, we get: 

\ \  $H_{A,Q} ^{ab} \simeq  F\times A$ with $F=T(H_{A,Q})/[H_{A,Q},H_{A,Q}]$ by Lemmas \ref{L2EM} (4) and \ref{AbelMorph}, 

\ \  $(Q\wr R_A)^{ab} \simeq Q \times A$ \ \  and 
 \ \  $(L\wr G )^{ab} \simeq L \times G$.

\smallskip

This implies that  $A\simeq G \simeq \Z^k$ and $F\simeq Q\simeq L$.
\end{proof}

\begin{rema} For the Lamplighter group $L_{2,1}$, it holds that $W=\langle \sigma, \tau\rangle= \langle (1,2,3,4), (1,3)\rangle = \langle \sigma,\tau \ \vert \ \sigma^4= \tau^2=(\sigma\tau)^2=\id \rangle$ is the diedral group $\D_8$ so it is metabelian 
and $V=\langle\sigma^p \tau \sigma^{-p} \rangle=\langle (1,3), (2,4) \rangle$ is abelian. Therefore Corollary \ref{NonisoLamp} reproves that $H=\langle g, R_\alpha \rangle$ of Subsection \ref{ClLamp} is the Lamplighter group $\frac{\Z}{2Z}\wr \Z$.
\end{rema}

\bigskip

\subsection{Solvable groups.}
\subsubsection{Metabelian groups that are not isomorphic to Lamplighters} \ 

\medskip

We consider the groups studied in \cite[Subsection 7.4]{GLtg}. Namely, $H=\HA$ with $A=\langle \alpha \rangle$, $\alpha \in [0, \frac{1}{3})\setminus \Q$ and $\mathfrak S (Q)=\langle \tau=(1,3) \rangle=\{\id,(1,3) \} < \mathcal S_3$.

\smallskip

We have that $W= \langle \sigma, \tau \rangle= \mathcal S_3$, \ $[W,W]= [\mathcal S_3, \mathcal S_3]=\mathcal A_3$ is abelian, hence by Theorem \ref{THF0} (2), the group $H=\HA$ is metabelian.

Moreover, $V=\langle \sigma^p \tau \sigma^{-p} \rangle$ is not abelian since $[\sigma, \tau]$ has order $3$. So $H=\HA$ is not a Lamplighter group.

\smallskip

We can also note that $H$ and the Lamplighter groups $\frac{\mathbb Z}{2 \mathbb Z}\wr \Z $ have same abelianization $\frac{\mathbb Z}{2 \mathbb Z}\times \Z $, by Corollary \ref{CPAb} (2). This fact was wrongly written in \cite{GLtg}.

\subsubsection{Example of a $3$-solvable group} \label{Subtil7.4}\

\medskip

Let $\mathfrak S (\QR) =\langle (1,2) ; (3,4) \rangle < \mathcal S_4$ and $A=\langle \alpha \rangle$, $\alpha \in \R/{}_{\dis\Z}\setminus \Q/{}_{\dis\Z}$.

\smallskip

We have  $W= \mathcal S_4$, $[W,W]= [\mathcal S_4, \mathcal S_4]=\mathcal A_4$  and $[\mathcal A_4, \mathcal A_4]$ is abelian. Hence, by Theorem \ref{THF0} (2), the group $H=\HA$ is $3$-solvable.
In addition, it holds that 

\noindent $[\mathfrak S (\QR) , \mathfrak S (\QR) ]=\{\id\}$ and $[W,W]\cap\mathfrak S (\QR) = \langle \tau_0\rangle$ where $\tau_0=(1,2)(3,4)$ since $(1,2), \ (3,4) \notin \mathcal A_4$.

Then, Proposition \ref{FinitePart} implies that the map $\dis \rho : \left\{\begin{array}{ll} {\dis \mathfrak S (Q )  } \to \ \  {\dis \frac{\ker \ell}{[H,H]} } \cr  \tau \ \mapsto \ \ E_\tau  [H,H] \end{array}\right.$ \  
is a surjective morphism whose kernel is either trivial or equal to $\langle \tau_0 \rangle$.

\smallskip

Actually, this kernel depends on the fact that $E_{\tau_0}$ belongs to $[H,H]$ or not.

\medskip

Using Lemma \ref{perm loc conjugate commutator}, we can check that if $a\in (1/q, 2/q)$ then 
$g:=[ R_a , E_{\tau_2}] [ R_a , E_{\tau_1}] ^2=E_{\tau_0}$.

\medskip

\begin{quote} Indeed, as $[\BP_{\R/\Z}]_q ([ R_a , E_{\tau_2}] [ R_a , E_{\tau_1}] ^2) \subset [\{0,a\}]_q$, we have to prove that the set $[\{a\}]_q$ does not contain discontinuity point of $g$. For this, we are going to prove that $\omega (g,0) = \omega (g,\tilde a) = \tau_0$, where $\tilde a=a -1/q \in (0,1/q)$:

\medskip

\noindent Lemma \ref{perm loc conjugate commutator} and straightforward calculus lead to 

\smallskip

$\pbul$  $\omega ( [R_a , E_{\tau_1}]^2,0) = [\sigma^2, \tau_1]^2= ((1,2)(3 ,4)) ^2=\id$,

$\pbul$ $\omega ([ R_a , E_{\tau_2}],0) = [\sigma^2, \tau_2]= (1,2)(3,4)=\tau_0$,

\medskip

$\pbul$ $\omega ( [R_a , E_{\tau_1}]^2,\tilde a) = [\sigma, \tau_1]^2= (1,3,2)^2=(1,2,3)$ and

$\pbul$  $\omega ( [ R_a , E_{\tau_2}],\tilde a) =[\sigma, \tau_2]= (1,4,3)$.

\medskip

\noindent Then, $\omega (g,0) =\tau_0$ and $\omega (g,\tilde a) = (1,4,3)(1,2,3)=\tau_0$.
\end{quote}

This proves that $E_{\tau_0}\in [H,H]$ so that  $\ker \rho$ is  trivial, and then the abelianization of $H$ is $(\frac{\mathbb Z}{2 \mathbb Z})^2\times \Z $.

\begin{rema} \label{remnsol}
Similar examples of $3$-solvable groups are provided by taking $\mathfrak S (\QR) =\langle (1,2)  \rangle < \mathcal S_4$ or $\mathfrak S (\QR) = \mathcal S_4$ and $A\simeq \Z^s$, $s\in \N\setminus\{0\}$. This leads to infinitely many $3$-solvable groups in $\IET$. 

In addition, finding solvable $\HA$ with arbitrary derived length $n$ reduces to find $W$ a finite solvable subgroup of $\mathcal S_q$ containing the $q$-cycle $\sigma$ and of derived length $n$. This can be done by iterating semidirect products, as follows.

\end{rema}

\subsubsection{Example of $n$-solvable groups.} \ 

\smallskip

\noindent {\large{\textsf{Construction for any $n\in \N\setminus\{0\}$ of a finite solvable subgroup of some $\mathcal S_q$ containing the $q$-cycle $\sigma$ and of derived length $n$.}}}

\smallskip

Let $\mathcal G_{2} =\mathcal G_{2^1}=\mathcal S_{2}$ abelian ($1$-solvable).


We argue by induction, let $n\in \N_{\geq 1}$ and  $\mathcal G_{2^n}$ be a  subgroup of $\mathcal S_{2^n}$ that is $n$-solvable and contains the ${2^n}$-cycle $\bigsigma_{2^n}$.

We consider two copies of  $\mathcal G_{2^n}$: $\mathcal G_{\{1,2,\cdots, {2^{n}}\}} $ viewed as a subgroup of the permutation group of $\{1,2,\cdots, {2^{n}}\}$ and $\mathcal G_{\{ 2^n+1 , \cdots, {2^{n+1}}\}}$ viewed as a subgroup of the permutation group of $\{ 2^n+1 , \cdots, {2^{n+1}}\}$.

\smallskip

We define a subgroup of $\mathcal S_{2^{n+1}}$  by $$\dis G_{2^{n+1}}= \bigl(\mathcal G_{\{1,2,\cdots, {2^{n}}\}} \oplus \mathcal G_{\{ 2^n+1 , \cdots, {2^{n+1}}\}}\bigr)\  \text{\large{$ \psd\limits_{\tau}$}} \ \Z / 2\Z \simeq \mathcal G_{2^{n}}\wr \Z / 2\Z$$ where $\tau =(\bigsigma_{2^{n+1}})^{2^{n}}$ permutes the sets $\{1,2,\cdots, {2^{n}}\}$ and  $\{2^{n}+1,\cdots ,2^{n+1}\}$. It can  be defined by $\tau (k) =k+ 2^{n} \mod 2^{n+1}$.

\medskip

For $g\in  G_{2^{n+1}}$ we note $\overline{g} = \tau g \tau$. We have $g \tau = \tau \overline{g} $, $\overline{g} \tau = \tau g $, and  for all $ g_1,g_2 \in \mathcal G_{2^{n}}$, the elements $g_1$ and $\overline{g_2}$ commute. In addition, we can write  $\dis G_{2^{n+1}}= \bigl(\mathcal G_{2^n}\oplus \overline{\mathcal G_{2^n}} \bigr) \  \text{\large{$ \psd\limits_{\tau}$}} \ \Z / 2\Z $

\medskip

In particular, $\mathcal G_{4}$ is the diedral group $\D_4$, it is metabelian and it contains a $4$-cycle.

\bigskip

We claim that $G_{2^{n+1}}$ contains an $2^{n+1}$-cycle $s=\tau \Pi_1$, where $\Pi_1=\bigsigma_{2^n}=(1,2,\cdots, 2^{n})$, $\Pi_1(k) = k+1 \mod 2^n$ if $k\in \{1,2,\cdots, {2^{n}}\} $ and  $\Pi_1(k) = k$ otherwise. 

\begin{lemm}
The group  $G_{2^{n+1}}$ is $(n+1)$-solvable.
\end{lemm}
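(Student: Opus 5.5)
We argue by induction on $n$, the case $n=1$ ($\mathcal G_2=\mathcal S_2$ abelian and nontrivial) being clear. We take as hypothesis that $\mathcal G_{2^n}$ has derived length exactly $n$, and write $G=G_{2^{n+1}}=(\mathcal G_{2^n}\oplus\overline{\mathcal G_{2^n}})\rtimes_\tau \Z/2\Z$. We denote by $M=\mathcal G_{2^n}\oplus\overline{\mathcal G_{2^n}}$ the base group, so that $G/M\simeq\Z/2\Z$ is abelian.

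\emph{Upper bound.} Since $G/M$ is abelian, $[G,G]<M$, and hence $D^{k}(G)<D^{k-1}(M)$ for all $k\ge1$. The derived series of a direct product is computed factorwise, so $D^{k-1}(M)=D^{k-1}(\mathcal G_{2^n})\oplus\overline{D^{k-1}(\mathcal G_{2^n})}$. Taking $k=n+1$ gives $D^{n+1}(G)<D^n(M)=\{\id\}$ by the induction hypothesis. Thus $G$ is solvable of derived length at most $n+1$.

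\emph{Lower bound.} This is the main step: we must show $D^n(G)\neq\{\id\}$. The plan is to prove that $[G,G]$ contains the ``anti-diagonal'' elements $g\,\overline{g}^{-1}$, and more usefully the elements $[\tau,g]=\tau g\tau g^{-1}=\overline{g}\,g^{-1}$ for $g\in\mathcal G_{2^n}$. For $g_1,g_2\in\mathcal G_{2^n}$ we then compute
\[
[\overline{g_1}g_1^{-1},\overline{g_2}g_2^{-1}]=\overline{[g_1,g_2]}\,[g_1^{-1},g_2^{-1}],
\]
using that $\mathcal G_{2^n}$ and $\overline{\mathcal G_{2^n}}$ commute. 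By induction on $k\ge1$, $D^{k}(G)$ contains elements of the form $\overline{u}\,v$ with $v$ ranging over generators of $D^{k-1}(\mathcal G_{2^n})$, suitably twisted by inversions. A cleaner route is to use the projection $\pi_1:M\to\mathcal G_{2^n}$ onto the first factor. Since $\pi_1$ is a morphism, $\pi_1(D^{k}(G))\supseteq D^{k-1}(\pi_1([G,G]))$ for $k\ge1$. It therefore suffices that $\pi_1([G,G])=\mathcal G_{2^n}$. This holds because $\pi_1(\overline{g}g^{-1})=g^{-1}$ runs over all of $\mathcal G_{2^n}$.

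It follows that $\pi_1(D^{n}(G))\supseteq D^{n-1}(\mathcal G_{2^n})$, which is nontrivial since $\mathcal G_{2^n}$ has derived length exactly $n$. Hence $D^n(G)\neq\{\id\}$. Combined with the upper bound, this shows that $G_{2^{n+1}}$ is $(n+1)$-solvable.

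The only point needing care is the inclusion $\pi_1(D^{k}(G))\supseteq D^{k-1}(\pi_1([G,G]))$. It follows by induction from $D^{k}(G)=D^{k-1}([G,G])$ together with $\pi_1(D^j(X))=D^j(\pi_1(X))$, valid for any subgroup $X<M$, which in fact gives equality. We also have to check that $\overline{g}g^{-1}$ is a commutator in $G$. This is the identity $\tau g\tau^{-1}g^{-1}=\overline{g}g^{-1}$, using $\tau^2=\id$. We also verify that $\overline{g}$ is the transported copy of $g$ acting on $\{2^n+1,\dots,2^{n+1}\}$, which is immediate from $\tau(k)=k+2^n \bmod 2^{n+1}$.
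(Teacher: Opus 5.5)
Your proof is correct and is essentially the paper's argument. The upper bound comes from the index-two base group (the paper cites the wreath-product bound of Proposition \ref{derlen}). The lower bound comes from surjecting the derived subgroup onto $\mathcal G_{2^n}$ through the first coordinate, using the commutators with $\tau$, and then pushing $D^{n-1}$ through that morphism. The only difference is that the paper projects an explicitly described subgroup $\Delta=\{g\overline{w}\}\subseteq D(G_{2^{n+1}})$ and must check by hand that it is closed under products and inverses. Projecting all of $[G,G]$ by $\pi_1$, as you do, makes that verification unnecessary.
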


\begin{proof}  \

According to Proposition \ref{derlen} (3), it holds that $n\leq \cl (G_{2^{n+1}}) \leq \cl(\mathcal G_{2^n})+\cl(\Z / 2\Z)=n+1$.

We now prove that  $\cl(G_{2^{n+1}})\geq n+1$.

\medskip

$\bullet$ We first prove that $ \dis \bds{\Delta :=\Bigl\{ g.\overline{w} \ ; \ g\in \mathcal G_{2^{n}}, w \in  g^{-1}  . \ D(\mathcal G_{2^{n}}) \Bigr\}}$ 
 is a subgroup of $D(G_{2^{n+1}})$.

\quad $\pbul$ $\Delta \subset D(G_{2^{n+1}})$ since for any $g.\overline{w} \in \Delta$, it holds that   $g.\overline{w}= g.\overline{g^{-1} . C }=[g,\tau] .\overline{C}$ with $C\in D(\mathcal G_{2^{n}})$.
In addition, it is clear that $\id \in\Delta$.

\quad $\pbul$ $\Delta$ is stable by the law. Indeed, let $ g_1\overline{w_1}, \  g_2\overline{w_2} \in \Delta$ with $g_1, g_2 \in \mathcal G_{2^{n}}$ and  $w_1 = g_1^{-1} . C_1$,  $w_2 = g_2^{-1} . C_2$ where $C_1, C_2\in D(\mathcal G_{2^{n}})$. We have

\qquad $\dis (g_1\overline{w_1})^{-1} =  g_1^{-1} . \overline{w_1^{-1}}$ with $w_1 ^{-1} = C_1 ^{-1}g_1= g_1 [g_1^{-1}, C_1^{-1}] C_1 ^{-1}$ and

\qquad $\dis g_1\overline{w_1} . g_2\overline{w_2}  =g_1 g_2 . \overline{w_1w_2}$ with

$\dis w_1 w_2= g_1^{-1} . C_1  g_2^{-1} . C_2 =   g_1^{-1} g_2^{-1} [g_2,C_1] C_1 C_2
=g_2^{-1} g_1^{-1} [g_1,g_2][g_2,C_1] C_1 C_2 \in (g_1 g_2 )^{-1} . D(\mathcal G_{2^{n}}) $

\medskip

$\bullet$ We easily check that the map 
$\bds{p : \Delta \to \mathcal G_{2^{n}}, \ g.\overline{w} \mapsto g}$, is a surjective morphism.

\medskip

$\bullet$ Finally $p$ induces surjective morphisms on the derived subgroups and we get $p: D^{n-1}(\Delta) \twoheadrightarrow D^{n-1}(\mathcal G_{2^{n}}) \not=\{\id\}$. Therefore 
$D^n(G_{2^{n+1}}) \supseteq D^{n-1}(\Delta) \not=\{\id\}$, then $\cl(G_{2^{n+1}})\geq n+1$.
\end{proof}

\medskip

We finish our construction by considering the conjugate, $\mathcal G_{2^{n+1}}$, of $G_{2^{n+1}}$ by the permutation that conjugates $\tau \Pi_1 $ to $\sigma_{2^{n+1}}$.

\bigskip

\noindent {\large{\textsf{Infinite $n$-solvable groups in $\IET$.}}}

\smallskip

A consequence of the last construction, Remark \ref{remnsol}, Theorem \ref{THF0} (2) and Corollary  \ref{Noniso} (1) is

\begin{prop} \label{nSol} \ 

\begin{enumerate}
\item Let $s,n\in \N\setminus\{0\}$, the group $\HA$ with $A\simeq \Z^s$ and $\mathfrak{S}(Q)=\mathcal G_n$ is $n$-solvable.

\item Given $n\in \N\setminus\{0\}$, there exist infinitely many non pairwise isomorphic $n$-solvable infinite groups in $\IET$. 
\end{enumerate}
\end{prop}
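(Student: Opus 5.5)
Item (1) follows directly from Theorem \ref{THF0} (2). By the construction above, $\mathcal G_n$ is realized as a subgroup of some $\mathcal S_q$. It contains the $q$-cycle $\sigma_q$, since the last step conjugates $\tau\Pi_1$ to $\sigma_{2^{n+1}}$, and it has derived length exactly $n$ by the preceding Lemma. We start from the base case $\mathcal G_2=\mathcal S_2$ and index by the derived length, i.e.\ we use the group of derived length $n$ (which sits in $\mathcal S_{2^n}$).

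Take $Q=\{E_\tau,\ \tau\in\mathcal G_n\}<\mathcal G_{\frac1q}$. Then $\mathfrak S(Q)=\mathcal G_n$. Because $\sigma\in\mathfrak S(Q)$, we get $W=\langle \mathfrak S(Q),\sigma\rangle=\mathfrak S(Q)$, so $W$ is $n$-solvable. Theorem \ref{THF0} (2) then shows that $\HA$ is $n$-solvable for every $A\simeq\Z^s$, $s\geq 1$. For $n=1$ the group $W$ is abelian, and Theorem \ref{THGenF1} (1) agrees with this.

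For Item (2), fix $n$ and let $s$ range over $\N\setminus\{0\}$. Choose $\Q$-independent irrationals $\alpha_1,\dots,\alpha_s$ and set $A_s=\langle\alpha_i\rangle\simeq\Z^s$, so that $H_s:=H_{A_s,Q}$ has $\mathfrak S(Q)=\mathcal G_n$. By Item (1) each $H_s$ is $n$-solvable. Each $H_s$ is infinite because it contains $R_{A_s}\simeq\Z^s$. Each $H_s$ lies in $\IET$ by construction. By Corollary \ref{Noniso} (1), $H_s\not\simeq H_{s'}$ whenever $s\neq s'$. This gives infinitely many pairwise non-isomorphic examples.

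The only step needing care is checking that the constructed $\mathcal G_n$ really has derived length exactly $n$ and contains the full cycle; the preceding Lemma and the conjugation step supply both facts. One must also note that "$p$-solvable" in Theorem \ref{THF0} (2) means derived length exactly $p$, which matches the Lemma's conclusion.
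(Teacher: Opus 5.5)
Your proposal is correct and follows the paper's own argument. You take $\mathfrak S(Q)$ to be the constructed group of derived length $n$ in $\mathcal S_{2^n}$, which contains the cycle $\sigma$, so $W=\mathfrak S(Q)$; Theorem \ref{THF0} (2) then gives $n$-solvability, and varying $s$ together with Corollary \ref{Noniso} (1) gives infinitely many pairwise non-isomorphic examples, with your reading of $\mathcal G_n$ as the group of derived length $n$ being the intended one.
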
 

\subsection{Non virtually solvable groups}  \ 

\smallskip

A consequence of Theorem \ref{THF0} (1) and Corollary \ref{Noniso} (1) is

\begin{prop} \label{nonVSol} \ 

\begin{enumerate}
\item Let $s,q \in \N$, $s\geq 1$ and $q \geq 5$, the group $\HA$ with $A\simeq \Z^s$ and $\mathfrak{S}(Q)\in \{\mathcal S_q, \mathcal A_q\}$ is an infinite finitely generated  non virtually solvable and non linear subgroup of $\IET$.

\item There exist infinitely many non pairwise isomorphic infinite finitely generated non virtually solvable and non linear subgroups of $\IET$.
\end{enumerate}
\end{prop}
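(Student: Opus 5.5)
Item (1) follows from Theorem \ref{THF0} (1). Fix $s\geq 1$, $q\geq 5$, and $\mathfrak S(Q)\in\{\mathcal S_q,\mathcal A_q\}$. In both cases $\langle \mathfrak S(Q),\sigma\rangle \supseteq \mathfrak S(Q)\supseteq \mathcal A_q$. Theorem \ref{THF0} (1) therefore says that $\HA$ satisfies Item (2) of Theorem \ref{THGenF1}, is not virtually solvable, and is not linear. It is finitely generated by Definition \ref{defH}. It is infinite because it contains $R_A\simeq \Z^s$ with $s\geq 1$; alternatively, any finite group is virtually solvable.

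For Item (2), we need infinitely many pairwise non isomorphic groups of this kind. The simplest invariant available is the rank of $A$, via Corollary \ref{Noniso} (1). Fix $q=5$ and $\mathfrak S(Q)=\mathcal A_5$, realised as $Q=\{E_\tau,\ \tau\in\mathcal A_5\}<\mathcal G_{\frac15}$. For each $s\geq 1$, choose $\Q$-independent irrationals $\alpha_1,\dots,\alpha_s$; for instance $\alpha_i=\sqrt{p_i}$ modulo $1$ with $p_i$ distinct primes. Then $1,\sqrt{p_1},\dots,\sqrt{p_s}$ are $\Q$-linearly independent, so the classes modulo $\Z$ generate a copy of $\Z^s$, as Definition \ref{defH} requires. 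Let $A_s=\langle\alpha_1,\dots,\alpha_s\rangle$ and $H_s=H_{A_s,Q}$.

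By Item (1), each $H_s$ is infinite, finitely generated, non virtually solvable and non linear. By Corollary \ref{Noniso} (1), $H_s\not\simeq H_{s'}$ whenever $s\neq s'$. The underlying reason is Theorem \ref{THRFin} (1): $H_s/T(H_s)\simeq \Z^s$, and the torsion subgroup is characteristic, so $s$ is an isomorphism invariant. Hence $\{H_s\}_{s\geq1}$ is an infinite family of pairwise non isomorphic examples.

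The argument is essentially an assembly of earlier results. The only point that needs care is making sure the hypotheses of Definition \ref{defH} are met: each $\alpha_i$ must be nonzero in $\R/\Z\setminus\Q/\Z$, and the $\alpha_i$ must be $\Q$-independent. One could also vary $q\geq5$ with $s$ fixed, but distinguishing those groups would need an additional invariant, so varying $s$ is the cleaner route.
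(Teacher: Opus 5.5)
Your proposal is correct and follows essentially the same route as the paper, which derives Item (1) from Theorem \ref{THF0} (1) and Item (2) from Corollary \ref{Noniso} (1) by varying the rank $s$ of $A$. Your explicit choice $\alpha_i=\sqrt{p_i}$ modulo $1$ and the check that $1,\alpha_1,\dots,\alpha_s$ are $\Q$-independent are welcome details that the paper leaves implicit.
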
 

\medskip

\begin{rema} \ 

$\pbul$ Using Theorem \ref{THRFin} and Corollary \ref{CPAb} (1) and (3), we can compute the abelianizations of the groups in consideration in Item (1). Indeed,

If $\mathfrak{S}(Q)=\mathcal S_q$, then $(\HA)_{\ab} = \Z^s \times ({\mathcal S_q})_{\ab}=
\frac{\mathbb Z}{2 \mathbb Z}  \times \Z^s $.

If $\mathfrak{S}(Q)=\mathcal A_q$, then $(\HA)_{\ab} = \Z^s $.

This implies that these groups are not isomorphic.

\medskip

$\pbul$ Moreover, one can construct subgroups of $\IET$ with the properties of Item (1) and only $2$ generators. Indeed,

Let $\mathfrak S (\QR)=\langle (1,2) \rangle< \mathcal S_q$ ($q\geq 5$) and  $A=\langle \alpha \rangle$ ($\alpha \in \R/{}_{\dis\Z}\setminus \Q/{}_{\dis\Z}$).

\smallskip

We have $W= \langle \sigma, \tau \rangle= \mathcal S_q$ and $[W,W]= [\mathcal S_q, \mathcal S_q]=\mathcal A_q$. Then, Theorem \ref{THF0} (1) applies and  $\HA$ has the required properties. In addition, by Corollary \ref{CPAb} (2), the abelianization of $\HA$ is $ \frac{\mathbb Z}{2 \mathbb Z} \times \Z $.
\end{rema}

\vfill \eject
\section{Appendix} \label{App} 

\subsection{Derived subgroup and Abelianization} 

\subsubsection{Definitions-Notations} Let $\Gamma$ be a group and $a,b,h\in \Gamma$. 

The \textbf{commutator} of $a$ and $b$ is the element of $\Gamma$
$$\boldsymbol{[a,b]:= ab a^{-1}b^{-1}}$$
One can check that $[b,a]=[a,b]^{-1}$, $h[a,b]h^{-1}=[hah^{-1},hbh^{-1}]$ and  $ba=[b,a]ab=ab[b^{-1}, a^{-1}].$

\smallskip

\noindent If $G$ and $K$ are subgroups of $\Gamma$, we set 
$\dis \boldsymbol{[G,K] :=\langle [g,k] \text{ with }  g \in G \text{ and } k \in  K \rangle}=[K,G]$ since  $[g,k]^{-1} =[k,g]$.

\smallskip

The \textbf{derived subgroup} of $\Gamma$ is $\boldsymbol{\dis D(\Gamma):=[\Gamma,\Gamma]}$.

\smallskip

Inductively, we define the \textbf{$k^{\text{th}}$ derived subgroup} of $\Gamma$, noted $\boldsymbol{D^k(\Gamma)}$ by $$D^{k+1}(\Gamma) = D (D^k(\Gamma)) \text{ and } D^0(\Gamma)=\Gamma$$

The \textbf{abelianization} of $\Gamma$ is $\boldsymbol{\dis \Gamma_{\ab}:=\frac{\Gamma}{D(\Gamma)}}$ \ and \ $\dis \boldsymbol{P_{\Gamma_{\ab}}}: \Gamma \to \Gamma_{\ab}$ denotes the canonical projection  $\gamma \mapsto \gamma. D(\Gamma)$.

\subsubsection{Basic properties} \label{BaseDG} \ 

\begin{enumerate}
\item If $K<H$ then $D^n(K)< D^n(H)$, $\forall n\in \N$.
\item Let $\phi : G \to G'$ be a group homomorphism then $\phi(D^n(G))=D^n(\phi(G))$, $\forall n\in \N$.
\item $D^n(H)$ is stable by any automorphism of $H$, hence $D^n(H)$ is normal.
\end{enumerate}

\subsubsection{Generators of the derived group}\label{geneDGFolk} 

Let $G$ be a group and $S$ be a generating set for $G$.

\smallskip

\noindent \textsf{We claim that any element of $[G,G]$ is the product of a finite number of conjugates of commutators of elements of $S$.}

\begin{proof} Indeed, let $g, k\in G$. \\ We first prove that $[g, k]$ is a product of conjugates of elements of the form $[a,k]$ with $a\in S$.

\begin{quote} 

\ \ \ By definition of $S$, $g$ can be written as $g=s_1 \cdots s_{p_g}$ with $s_i\in S$ and $p_g\in \N$. We argue by induction on $p$, supposing that for any  $f\in G$ that is a product of at most $p-1$ elements of $S$, the commutator $[f,k]$ is a product of conjugates of $[a,k]$ with $a\in S$.

A straightforward calculus leads to  $[g_1 \cdots g_p, k] = g_1 [g_2 \cdots g_p, k] g_1^{-1} [g_1,k]$ and by induction hypothesis, it holds that $[g_2 \cdots g_p, k] =\prod h_i [a_i,k]h_i^{-1}$.
\end{quote}

It remains to prove that any $[b,g]$ with $b\in S$, decomposes in conjugates of commutators of elements in $S$. This follows from the fact that $[b,g]= [g,b]^{-1}$ and the previous argument shows that $[g,b]$ has the required decomposition.
\end{proof}

\subsection{Semi direct and wreath products} 

\smallskip

In this subsection, we consider a \textbf{semi direct product} $H=N\rtimes G$ as a group $H$ equipped with $N\lhd H$ and $G<H$ such that $H=NG$ and $N\cap G =\{\id\}$.

\smallskip

In this internal context, the \textbf{wreath product} $L\wr G $ is the group $H=\bigoplus\limits_{g\in G} L^g \rtimes G$ where $L$ and $G$ are subgroups of $H$ such that the subgroups $L^g:= gL g^{-1}$, $g\in G$ are pairwise almost disjoints and commuting subgroups (i.e if $g'\not=g$ then $L^g\cap L^{g'}=\{\id\}$ and any element of $L^g$ commutes with any element of $L^{g'}$).

\subsubsection{Derived subgroup of a semi direct product}It is well known that 
$$\dis\boldsymbol{D(N\rtimes G) = \langle [N,N],[N,G] \rangle \rtimes [G,G] }$$

\subsubsection{Abelianization of semi direct and wreath products}  \ 

\begin{prop} \ 

\begin{enumerate}
\item The group $P_{N_{\ab}}\bigl([N,G]\bigr)$ is a normal subgroup of $N_{\ab}$ and $$\dis (N\rtimes G)_{\ab} = \frac{N_{\ab}}{P_{N_{\ab}}\bigl( [N,G] \bigr)} \times G_{\ab}$$
\item $\dis (L\wr G)_{\ab} = L_{\ab} \times G_{\ab}$.
\end{enumerate}
\end{prop}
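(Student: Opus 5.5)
The plan is to work with the quotient $\pi : N\rtimes G \to (N\rtimes G)_{\ab}$ and the formula $D(N\rtimes G)=\langle [N,N],[N,G]\rangle \rtimes [G,G]$ recalled just above. Set $M:=\langle [N,N],[N,G]\rangle$. This is a subgroup of $N$, because $N\lhd H$ gives $[N,G]\subset N$.

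\textbf{Step 1: normality.} Since $N_{\ab}$ is abelian, every subgroup of it is normal, so $P_{N_{\ab}}([N,G])$ is normal. Moreover $P_{N_{\ab}}^{-1}\bigl(P_{N_{\ab}}([N,G])\bigr)=[N,G][N,N]=M$. Hence
$$\frac{N_{\ab}}{P_{N_{\ab}}([N,G])}\simeq \frac{N}{M}.$$

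\textbf{Step 2: the isomorphism for Item (1).} Every $h\in H$ can be written uniquely as $h=ng$ with $n\in N$ and $g\in G$. Define
$$\Phi(ng)=\bigl(nM,\ g[G,G]\bigr)\in N/M\times G_{\ab}.$$
To see that $\Phi$ is a morphism, compute
$$n_1g_1n_2g_2=n_1(g_1n_2g_1^{-1})g_1g_2,$$
and note that $g_1n_2g_1^{-1}=[g_1,n_2]\,n_2\equiv n_2 \pmod M$. So the $N$-coordinate is multiplicative modulo $M$, and the $G$-coordinate is plainly a morphism to $G_{\ab}$. Surjectivity is clear.

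For the kernel, $\ker\Phi=\{ng : n\in M,\ g\in[G,G]\}=M[G,G]$, which equals $D(H)$ by the recalled formula. Therefore $H_{\ab}\simeq N/M\times G_{\ab}$, which is Item (1).

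The step that needs the most care is checking that $M$ is normalised by $G$, so that $M[G,G]$ really is the semidirect product appearing in the formula; I will simply take the recalled formula as given. For the morphism check itself, it also helps that $M\lhd N$: indeed $M\supset[N,N]$, and every subgroup of $N$ containing $[N,N]$ is normal in $N$.

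\textbf{Step 3: Item (2).} Apply Item (1) with $N=\bigoplus_{g\in G}L^g$. Since the factors $L^g$ commute pairwise,
$$N_{\ab}=\bigoplus_{g\in G}(L^g)_{\ab}.$$
The subgroup $[N,G]$ is generated by the elements $[x,g]=x\,(gx^{-1}g^{-1})$ for $x\in N$. In $N_{\ab}$ these become the relations identifying the image of $x\in L^{g'}$ with the image of its conjugate in $L^{gg'}$.

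Quotienting by these relations identifies all the copies $(L^g)_{\ab}$ with a single copy $L_{\ab}$. The identification map is the sum map $\bigoplus_g (L^g)_{\ab}\to L_{\ab}$, which sends each $(L^g)_{\ab}$ to $L_{\ab}$ via conjugation by $g^{-1}$. This map is surjective and its kernel is exactly the image of $[N,G]$:
\begin{itemize}
\item the kernel contains each generator $x-gxg^{-1}$, since both terms map to the same element of $L_{\ab}$;
\item conversely, a finite sum with total zero can be rewritten, by moving each term into $L^{\id}$ using these relations, as a combination of such generators.
\end{itemize}
This gives $N_{\ab}/P_{N_{\ab}}([N,G])\simeq L_{\ab}$, and Item (1) then yields $(L\wr G)_{\ab}\simeq L_{\ab}\times G_{\ab}$.

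The main obstacle is this kernel computation in Step 3. I would handle it by writing an explicit inverse map $L_{\ab}\to N_{\ab}/P_{N_{\ab}}([N,G])$ induced by the inclusion $L=L^{\id}\hookrightarrow N$.
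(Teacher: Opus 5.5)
Your proof is correct and follows the same route as the paper. For Item (1) you read off $H_{\ab}\simeq N/\langle [N,N],[N,G]\rangle\times G_{\ab}$ from the description of $D(N\rtimes G)$ and pass to $N_{\ab}/P_{N_{\ab}}([N,G])$ by the third isomorphism theorem; for Item (2) you identify each copy $(L^g)_{\ab}$ with $L_{\ab}$ modulo $P_{N_{\ab}}([N,G])$. Your explicit morphism $\Phi$ and the sum map $\bigoplus_g (L^g)_{\ab}\to L_{\ab}$ supply details the paper leaves implicit, notably the injectivity in Item (2): the paper only shows that the conjugate copies become congruent to $L_{\ab}$. Also, $\ker\Phi=D(H)$ already follows from $D(H)\subseteq\ker\Phi$ and $M[G,G]\subseteq D(H)$, so you never need normality of $M$ under $G$.
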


\begin{proof} \ 
\begin{enumerate}
\item $P_{N_{\ab}}\bigl([N,G]\bigr)$ is a normal subgroup of $N_{\ab}$ since $[N,G]$ is a normal subgroup of $N$.

\medskip

As $ng \in D(H) \Lra g\in D(G) \text{ and } n \in \langle [N,N], [N,G]\rangle$, it holds that 
$$H_{\ab} =  \frac{N}{\langle [N,N], [N,G] \rangle} \times G_{\ab} $$
In addition, $[N,N]<\langle [N,N], [N,G] \rangle$ are normal subgroups of $N$, so by the third isomorphism theorem, we have 
$$ \frac{N}{\langle [N,N],[N,G] \rangle} = \frac{N_{\ab} }{V} \text{ where }$$
$$\dis V=\frac{\langle [N,N],[N,G] \rangle} {[N,N]} = P_{N_{\ab}}\bigl( \langle [N,N],[N,G] \rangle \bigr)= P_{N_{\ab}}\bigl([N,G]\bigr)$$

\item 

$(L\wr G)_{\ab} =  (\overbrace{\bigoplus\limits_{g\in G} L^g }^{N} \ \rtimes G)_{\ab} =G_{\ab} \times W$ where, according to Item (1), \vskip -4mm
$$W= \frac{N_{\ab}}{P_{N_{\ab}}\bigl([N,G]\bigr)}
= \frac{\bigoplus\limits_{g\in G} g(L_{\ab})g^{-1}}{P_{N_{\ab}}\bigl([N,G] \bigr)}=L_{\ab}$$ 
since \ 
$gL_\ab g^{-1} =\{g \ \ell.[N,N] \ g^{-1}, \ \ell \in L\}=\{g\ell g^{-1} \ g[N,N]g^{-1} , \ \ell \in L\}$

$=\{\ell \ [\ell ^{-1}, g][N,N], \ \ell \in L\}
\equiv L_\ab  \mod P_{N_{\ab}}\bigl([N,G]\bigr)$.
\end{enumerate}
\vskip -3mm \end{proof}

\subsection{Solvable groups} 
\begin{defi}
Let $n\in \N$, a group $G$ is \textbf{$n$-solvable} if $D^n(G)=\{\id\}$ and $D^{n-1}(G)\not=\{\id\}$  ($n\geq 1$), and $n$ is denoted by $\cl(G)$.
\end{defi}

\begin{prop} \label{derlen}\ 
\begin{enumerate}

\item If $H$ is $n$-solvable then any subgroup and any quotient of $H$ is $(\leq n)$-solvable .

\smallskip

\item Let $H$ be a group and $N$ be a normal subgroup of $H$. 

$H$ is solvable if and only if $N$ is solvable and $H/N$ is solvable.

In addition $\max(\cl (N), \cl (H/N)) \leq \cl (H) \leq \cl (N)+\cl (H/N)$.

\smallskip

\item $L\wr G$ is solvable if and only if $L$ and $G$ are solvable.

In addition $\max(\cl (L),\cl (G)) \leq \cl (H) \leq \cl (L)+\cl (G)$.
\end{enumerate}
\end{prop}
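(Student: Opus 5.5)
We prove the three items of Proposition \ref{derlen} in order, each time using the elementary properties of derived series collected in Subsection \ref{BaseDG}.

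For Item (1), let $K<H$. Property (1) of \ref{BaseDG} gives $D^n(K)<D^n(H)=\{\id\}$. Let $\pi:H\to H/N$ be the projection. Property (2) gives $D^n(H/N)=\pi(D^n(H))=\{\id\}$. So in both cases the derived length is at most $n$.

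For Item (2), let $\pi:H\to H/N$ again. If $H$ is solvable, then $N$ and $H/N$ are solvable by Item (1), and $\max(\cl(N),\cl(H/N))\leq \cl(H)$. Conversely, set $k=\cl(H/N)$. Then $\pi(D^k(H))=D^k(H/N)=\{\id\}$, so $D^k(H)<N$. Applying Property (1) with $m=\cl(N)$ yields
$$D^{k+m}(H)=D^m(D^k(H))<D^m(N)=\{\id\},$$
hence $\cl(H)\leq \cl(N)+\cl(H/N)$. This is the standard argument; the only care needed is the identity $D^{k+m}=D^m\circ D^k$, which is immediate from the inductive definition.

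For Item (3), write $L\wr G=N\rtimes G$ with $N=\bigoplus_{g\in G}L^g$. The quotient $(L\wr G)/N$ is isomorphic to $G$. The step that needs care is the equality $\cl(N)=\cl(L)$. Since the factors $L^g$ commute pairwise and intersect trivially, $N$ is their internal direct sum. A commutator of elements of a direct sum is computed coordinatewise, so $D^j(N)=\bigoplus_g D^j(L^g)=\bigoplus_g D^j(L)^g$. Hence $D^j(N)$ is trivial exactly when $D^j(L)$ is.

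Applying Item (2) to $N\lhd L\wr G$ then gives that $L\wr G$ is solvable if and only if $L$ and $G$ are, with $\cl(L\wr G)\leq \cl(L)+\cl(G)$. The lower bound $\max(\cl(L),\cl(G))\leq\cl(L\wr G)$ follows from Item (1), since $L$ is a subgroup and $G$ is both a subgroup and a quotient of $L\wr G$.
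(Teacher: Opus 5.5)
Your proof is correct and follows the paper's argument essentially step for step: subgroups and quotients via the basic properties of derived series, $D^{\cl(H/N)}(H)<N$ for the extension bound, and reduction of $L\wr G$ to the base group $\bigoplus_g L^g$ with $\cl(\bigoplus_g L^g)=\cl(L)$ (which you justify by a coordinatewise commutator computation, whereas the paper merely asserts it). One minor quibble: pairwise commuting, pairwise trivially intersecting subgroups need not form a direct sum, so that inference is not valid as stated; however, directness is built into the definition of $L\wr G$, and in any case pairwise commutation alone gives $[\prod_g a_g,\prod_g b_g]=\prod_g [a_g,b_g]$, hence $D^j(N)=\langle D^j(L^g),\ g\in G\rangle$, which is all you need.
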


\begin{proof} \ 
\begin{enumerate}
 
\item Let $K<H$, by the basic properties \ref{BaseDG} we have  $D^n(K)<D^n(H)=\{\id\}$.

Let $H/N$ be a quotient of $H$ and $p : H\twoheadrightarrow H/N$. By Property \ref{BaseDG} (2), we have $p(D^n(H))= D^n(p(H))=D^n(H/N)$.

\medskip

\item Suppose that $N$ is $n$-solvable and $H/N$ is $m$-solvable then by Property \ref{BaseDG} (2), it holds that $p(D^m(H)) =D^m(p(H))= D^m(H/N)=\{\id\}$. This means that $D^m(H)<N$, therefore $D^{m+n}(H)<D^n(N)=\{\id\}$.

\medskip

\item According to the previous item, $L\wr G$ is solvable if and only if 
$\bigoplus\limits_{g\in G} L^g$ and $G$ are solvable. In addition, $\bigoplus\limits_{g\in G} L^g$ is $n$-solvable if and only if  $L$ is $n$-solvable. Therefore, $L\wr G$ is solvable if and only if 
$L$ and $G$ are solvable and $\max(\cl (L), \cl (G)) \leq \cl (H) \leq \cl (L)+\cl (G)$.

\end{enumerate}
\vskip -0.5cm \end{proof}

\subsection{Virtually solvable groups}  \label{AppVR} \

\smallskip

This subsection resume the solution given by Y. Cornulier for the  Mathematics Stack Exchange conversation \textbf{"If $N$ and $G/N$ are virtually solvable then $G$ is virtually solvable".}

\medskip

\begin{defi} Let $G$ be a group.
\begin{itemize}
\item The group $G$ is \textbf{virtually solvable} if contains a finite index subgroup that is solvable.
\item A finite index normal solvable subgroup of $G$ is \textbf{maximal} if its index is minimal.
\end{itemize}
\end{defi}

\smallskip

As every finite index subgroup contains a normal subgroup also of finite index, being virtually solvable is equivalent to containing a finite index normal subgroup that is solvable.

\medskip

\begin{prop} 
\begin{enumerate}
\item Any subgroup of a virtually solvable group is virtually solvable.
\smallskip
\item Let $G$ be a group and $N$ be a normal  subgroup of $G$.
If  $N$ and $G/N$ are virtually solvable then $G$ is virtually solvable.
\end{enumerate}
\end{prop}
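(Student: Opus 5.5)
For Item (1), I would take a virtually solvable group $G$, a finite index solvable subgroup $S<G$, and an arbitrary subgroup $K<G$. Then $K\cap S$ is solvable, as a subgroup of a solvable group (Proposition \ref{derlen} (1)). It also has finite index in $K$: the map $K/(K\cap S)\to G/S$, $k(K\cap S)\mapsto kS$, is well defined and injective, so $[K:K\cap S]\le [G:S]<\infty$. This item is routine.

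For Item (2), I would use the reduction noted above and assume from the start that the finite index solvable subgroups are normal. Let $p:G\to G/N$ be the projection and let $\bar M\lhd G/N$ be a finite index solvable normal subgroup. Set $G_1=p^{-1}(\bar M)$. This has finite index in $G$, contains $N$, and $G_1/N\simeq \bar M$ is solvable. Since it suffices to show that $G_1$ is virtually solvable, I may replace $G$ by $G_1$ and assume that $G/N$ is solvable.

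The key step, and the main obstacle, is to produce a finite index solvable subgroup of $N$ that is normal in all of $G$, not just in $N$. Here I would use the notion of a maximal finite index normal solvable subgroup from the definition above. Choose $M\lhd N$ solvable of minimal index. I claim $M$ contains every solvable normal subgroup $M'$ of $N$. Indeed, $MM'$ is normal in $N$, and it is solvable by Proposition \ref{derlen} (2), since $MM'/M\simeq M'/(M\cap M')$ is solvable. Also $[N:MM']\le [N:M]$, so minimality forces $MM'=M$. Hence $M$ is the unique maximal such subgroup. Conjugation by any $g\in G$ is an automorphism of $N$ carrying finite index normal solvable subgroups to subgroups of the same kind and the same index, so $gMg^{-1}=M$ and $M\lhd G$.

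To finish, I would consider $G/M$. It contains the finite normal subgroup $N/M$, and the quotient $(G/M)/(N/M)\simeq G/N$ is solvable. I expect a second obstacle here, because an extension of a finite group by a solvable group need not be virtually solvable in general when the quotient is infinite. To get around it, I would take the centralizer $C=C_{G/M}(N/M)$. This is the kernel of the conjugation action $G/M\to \mathrm{Aut}(N/M)$, and $\mathrm{Aut}(N/M)$ is finite, so $C$ has finite index in $G/M$. Moreover $C\cap N/M$ is abelian, since it lies in the center of $C$, and $C/(C\cap N/M)$ embeds in $G/N$, so it is solvable. By Proposition \ref{derlen} (2), $C$ is solvable. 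Finally, the preimage $\tilde C$ of $C$ in $G$ has finite index, and it is solvable by Proposition \ref{derlen} (2) again, because $M$ is solvable and $\tilde C/M\simeq C$ is solvable. This proves that $G$ is virtually solvable.
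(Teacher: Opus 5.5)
Your proof is correct and follows the paper's argument essentially step for step: you reduce to $G/N$ solvable by pulling back a finite index solvable subgroup (the paper's case (d)), choose a minimal-index normal solvable $M\lhd N$ and show it is normal in $G$ (case (c)), use the centralizer of the finite group $N/M$ in $G/M$ (case (b)), and conclude by pulling back through $G\to G/M$ (case (a)); the only variation is that you get finite index of the centralizer from the finiteness of $\mathrm{Aut}(N/M)$, whereas the paper counts the finitely many commutators $[n,f]$ for each $n$. One correction to your side remark: a group with a finite normal subgroup and solvable quotient is always virtually solvable, which is exactly what your centralizer argument proves, so only solvability itself can fail, not virtual solvability.
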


\begin{proof} \ 

\begin{enumerate}
\item Let $G$ be a group and $K<G$ be a solvable subgroup of finite index. Let $\Gamma <G$.

\smallskip

The group $\Gamma \cap K$ has finite index in $\Gamma$, since the map $\dis \left\{ \begin{array}{ll}
\Gamma\diagup { \hskip -1mm}_{\dis \Gamma \cap K} \to  G\diagup {\hskip -1mm }_{\dis K} \cr \gamma. \  \Gamma \cap K \ \mapsto  \gamma.\ K \end{array}\right.$ is injective, and
$G\diagup {\hskip -1mm }_{\dis K}$ is finite. 
In addition,  $\Gamma \cap K$ is solvable as a subgroup of the solvable group $K$.

\medskip

\item  $\boldsymbol{(a)}$  We consider the case where $N$ is solvable and $G/N$ is virtually solvable. Let $\mathcal R$ be a finite index solvable subgroup of $G/N$, noting that $R:=p^{-1}(\mathcal R)$ is a subgroup of $G$ that contains $N$, 
the associated exact sequence 
$$1\to \overbrace{\ \ N \ \ }^{\text{solvable}} \hookrightarrow \quad  G \quad \ra\limits^{p} \quad \overbrace{\ \ G/N\  \ }^{\text{\tiny virtually solvable}} \to 1$$ leads to 
$$1\to \  \underbrace{\ \ N \ \ }_{\text{solvable}}  \hookrightarrow \ \ {R=p^{-1}(\mathcal R) }\ra  \underbrace{\mathcal R= R/N}_{\text{solvable}} \to 1  $$

By the classical argument for solvable groups, $R$ is solvable and 
we claim that $R$ has finite index in $G$. Indeed, the third isomorphism theorem implies that
$$\frac{G/N}{\mathcal R} = \frac{G/N}{R/N}\simeq G/R$$.

$\boldsymbol{(b)}$ We consider the case where $G\diagup{\hskip -1mm}_{\dis N}$ is solvable and $N$ is finite.

\medskip

Let $C(N):=\{h\in G \ | \ hn=nh, \forall n\in N \}$ be the centralizer of $N$ in $G$.

\smallskip

\ \ $(i)$ \textsf{We first prove that $C(N)$ has finite index in $G$.}

As $C(N) = \bigcap_{n\in N} C(n)$ and $N$ is finite, it suffices to prove that $C(n)$ is of finite index in $G$, for every $n\in N$. Indeed, let $n\in N$.  
For any $f,g \in G$, we have $$\dis [n,f]=[n,g] \Lra n f n^{-1} f^{-1}=  n g n^{-1} g^{-1} \Lra 
g^{-1} f n^{-1} =   n^{-1} g^{-1} f  \Lra g^{-1} f \in C(n)$$
In addition, $\dis \mathcal F:=\left\{ [n,f] , \  f\in G \right\}$ is finite as a subset of $N$, since $\dis [n,f] = n\overbrace{f n^{-1} f^{-1}}^{\sct \in N} \in N$. Let's write $\mathcal F=\left\{ [n,f_i] , 1 \leq i \leq p\right\}$.

\smallskip 

Let $g\in G$, there exists $ 1 \leq i \leq p$ such that $ [n,g]=[n,f_i]$, hence such that $f_i ^{-1} g \in C(n)$. This implies that $C(n)$ has finite index in $G$ and so does $C(N)$.

\medskip

$(ii)$ \textsf{We prove that $C(N)$ is solvable.} 

Let $p : G \to G/N$ be the canonical projection. The group $C(N)/N\cap C(N) \simeq p(C(N))$ is solvable as a subgroup of the solvable group $G/N$. And, we have the exact sequence $$ 1 \to \underbrace{N\cap C(N)}_ {\text{abelian}} \to C(N)\to \underbrace{C(N)/N\cap C(N)}_{\text{solvable}} \to 1$$  By the classical argument for solvable groups, $C(N)$ is solvable. 

\medskip

In conclusion, $(i)$ and $(ii)$ imply that $G$ is virtually solvable.

\bigskip
$\boldsymbol{(c)}$ We consider the case where $N$ is virtually solvable and  $G\diagup{\hskip -1mm}_{\dis N}$ is solvable.

Let $M$ be a solvable normal subgroup of $N$ of minimal finite index in $N$ among the normal solvable subgroups of finite index in $N$.

We claim that $M$ is normal in $G$. Otherwise, there exists $g\in G$ such that $M^g=gMg^{-1}\not= M$. Then $MM^g$ is a solvable normal subgroup of $N$, and its index in $N$ is strictly less than the index of $M$. 

Therefore, we have
$$1\to \underbrace{N/M}_{\text{finite}} \to G/M \to \frac{G/M}{N/M} \simeq \underbrace{G/N}_{\text{solvable}}\to 1$$
Hence, by Item $(b)$, $G/M$ is virtually solvable. 

Finally, $M$ is solvable and $G/M$ is virtually solvable, then by Item $(a)$, $G$ is virtually solvable.

\medskip

$\boldsymbol{(d)}$ In the general case, we have:

$$\dis 1\to \overbrace{\ \ N \ \ }^{\text{\tiny virtually solvable}} \hookrightarrow \ \  G \ \  \ra\limits^{p} \ \ \overbrace{\ \ G/N\ \ }^{\text{\tiny virtually solvable}}\to 1 $$
leading, by choosing  $\mathcal R$  a finite index solvable subgroup of $G/N$, to 
$$1\to  \ \  \underbrace{\ \ N \ \ }_{\text{\tiny virtually solvable}}  \hookrightarrow \ \ R=p^{-1}(\mathcal R)  \ra  \underbrace{\mathcal R= R/N}_{\text{solvable}} \ \to 1  $$

We conclude by Item (c) that $R$ is virtually solvable and it as finite index (see (a)) in $G$. Hence, the group $G$ is also virtually solvable.

\end{enumerate}
\end{proof}

\bigskip

\bibliographystyle{alpha}
\bibliography{RefIET}
\end{document}